\newtheorem{Th}{Theorem}
\newtheorem{Cor}{Corollary}
\begin{document}
\thispagestyle{empty}

\title[Bifurcations of autoresonant modes]
{Bifurcations of autoresonant modes in oscillating systems with combined excitation}

\author{Oskar Sultanov}

\address{Institute of Mathematics, Ufa Federal Research Centre, Russian Academy of Sciences, \newline 112, Chernyshevsky str., Ufa, Russia, 450008}
\email{oasultanov@gmail.com}

%\thanks{\it \today}

\maketitle
{\small

{\small
\begin{quote}
\noindent{\bf Abstract.}
A mathematical model describing the capture of nonlinear systems into the autoresonance by a combined parametric and external periodic slowly varying perturbation is considered. The autoresonance phenomenon is associated with solutions having an unboundedly growing amplitude and a limited phase mismatch. The paper investigates the behaviour of such solutions when the parameters of the excitation pass through bifurcation values. In particular, the stability of different autoresonant modes is analyzed and the asymptotic approximations of autoresonant solutions on asymptotically long time intervals are proposed by a modified averaging method with using the constructed Lyapunov functions.

\medskip

\noindent{\bf Keywords: }{nonlinear equations, autoresonance, asymptotics, stability, bifurcation}

\medskip
\noindent{\bf Mathematics Subject Classification: }{34C23, 34Ñ29, 34D05, 37B25, 37B55}
%34C15  	Nonlinear oscillations, coupled oscillators 34D05  	Asymptotic properties 37B25  	Lyapunov functions and stability; attractors, repellers, 	37B55  	Nonautonomous dynamical systems 	93D20  	Asymptotic stability	
\end{quote}
}

\section*{Introduction}

Autoresonance is a phenomenon that occurs when a nonlinear oscillating system is perturbed by a slowly varying periodic force. Under certain initial conditions, the system automatically adjusts to the pumping and keeps this state for a sufficiently long period of time. Due to this effect, the energy of the system can increase significantly~\cite{FF01,LFS09}.
The autoresonance was first suggested in the problems associated with the acceleration of relativistic particles in the middle of the twentieth century, and nowadays it is considered as a universal phenomenon that plays the important role in a wide range of physical systems~\cite{ANSPJETP87,FGFPRE00,BFPRL14,MMFJAPRD17,BSF18}. The study of relevant mathematical models leads to new and interesting problems in the field of nonlinear dynamics~\cite{LFJPA08,LKRMS08,NVA13,KGT17}.

Mathematical models associated with the autoresonance have been actively studied recently.
In particular, the models with the external driving were investigated in~\cite{LKTMF03,LFJPA08}, while the systems with the parametric pumping were considered in~\cite{KMPRE01,KGND07}. Much less attention was paid to the systems with a combined external and parametric excitation. In particular, it was shown in~\cite{OSSIAM18} that the combined excitation leads to the coexisting of several stable autoresonant modes with different phase mismatches depending on the values of the excitation parameters. In this paper, the autoresonance model with the combined excitation is considered and the behaviour of its solutions in the vicinity of bifurcation points is investigated.

The paper is organized as follows.
In section 1, the mathematical formulation of the problem is given.
In section 2, the autoresonant modes are described and the partition of a parameter space is constructed.
In section 3, the stability of different particular autoresonant solutions is discussed.
Section 4 provides asymptotic analysis of general autoresonant solutions using the constructed Lyapunov functions.
A discussion of the results obtained is contained in section 5.

\section{Problem statement}

Consider the  non-autonomous  system of two nonlinear differential equations~\cite{OSSIAM18}:
\begin{align}
    \label{MS}
    \begin{split}
        \frac{d\rho}{d\tau}+\mu(\tau) \rho \sin (2\psi+\nu) & =  \sin\psi, \\
        \rho\Big[\frac{d\psi}{d\tau}-\rho^2+ \lambda \tau\Big]+ \mu(\tau) \rho \cos (2\psi+\nu) & = \cos\psi,
    \end{split}
\end{align}
with the parameters $\lambda>0$, $\nu\in [0,\pi)$ and a smooth given function $\mu(\tau)$.
This system arises in the study of the autoresonance phenomena for a wide class of nonlinear oscillators with the combined parametric and external chirped-frequency excitation.
The solutions $\rho(\tau)$ and $\psi(\tau)$ correspond to the amplitude and the phase mismatch of the nonlinear oscillators.
The solutions to system \eqref{MS} with $\rho(\tau)\to\infty$ and $\psi(\tau)=\mathcal O(1)$ as $\tau\to\infty$ are associated with the phase-locking and the capture into the autoresonance, while the solutions with $\rho(\tau)=\mathcal O(1)$ and $\psi(\tau)\to\infty$ as $\tau\to\infty$ relate to the phase-slipping phenomenon and the absence of the capture.

The joint influence of the parametric and the external excitations is determined by the behaviour of the function $\mu(\tau)$. Indeed, if $\mu(\tau)\sim \tau^{-1/2-\varkappa}$, $\varkappa>0$, the parametric pumping is weak and system \eqref{MS} corresponds to a perturbation of the model with the pure external excitation~\cite{LKTMF03}. If $\mu(\tau)\sim \tau^{-1/2+\varkappa}$, the external driving becomes insignificant and system \eqref{MS} represents a perturbation of the model of parametric autoresonance~\cite{OSPSIM16}.
The parametric and the external excitations act equally only when $\mu(\tau)\sim\tau^{-1/2}$. In this paper it is assumed that
\begin{gather*}
    \mu(\tau)=\tau^{-1/2}\Big(\mu_0+\sum_{k=1}^\infty \mu_k \tau^{-k}\Big),\quad \tau\to\infty, \quad \mu_k={\hbox{\rm const}}.
\end{gather*}

Note that system \eqref{MS} appears when the long-term evolution of perturbed nonlinear systems is investigated by using the averaging method~\cite{BM61}. A simple example is given by  the following equation:
\begin{gather}
\label{example}
    \frac{d^2 x}{dt^2} + (1+\epsilon \beta(t)\cos 2\zeta(t)) U'(x)=\epsilon \cos \zeta(t),
\end{gather}
where $\beta(t)=(1+\epsilon t)^{-1/2}$, $\zeta(t)=t-\vartheta t^2$, $U(x)=x^2/2- \epsilon x^4/4+\mathcal O(\epsilon^2 x^6)$, $x\to 0$, $0<\epsilon,\vartheta\ll 1$. It is easy to see that equation \eqref{example} with $\epsilon=0$ has the stable solution $x(t)\equiv 0 $.  Solutions to \eqref{example} with initial values $(x(0),x'(0))$ sufficiently close to the equilibrium $(0,0)$, whose the energy $E(t)=(x'(t))^2/2+U(x(t))$ increases significantly with time and the phase $\Psi(t)=-\arctan (x'(t)/x(t))$ is synchronized with pumping such that $\Psi(t)-\zeta(t)=\mathcal O(1)$, are associated with the capture into the autoresonance.
To approximate such solutions, introduce slow and fast variables: $\tau=\epsilon t/(2\kappa)$ and $\zeta=\zeta(t)$, where $\kappa=(4/3)^{1/3}$. Then the substitution
\begin{gather*}
x(t)=\kappa \rho(\tau) \cos(\psi(\tau)-\zeta)+\mathcal O(\epsilon)
\end{gather*}
into equation \eqref{example} and the averaging over the fast variable give system \eqref{MS} for the functions $\rho(\tau)$ and $\psi(\tau)$ with $\lambda=8\vartheta\epsilon^{-2}\kappa^2$ and $\mu(\tau)=\beta(t)\sqrt {2\kappa}/4$. A similar but more complex transition to system of type \eqref{MS} takes place when studying of autoresonant energy excitation in infinite-dimensional systems described by nonlinear partial differential equations (see, for instance,~\cite{LKRMS08}).

It follows from \cite{OSSIAM18} that system \eqref{MS} admits two or four autoresonant modes with different phase mismatches. The number of modes and their stability depend on the values of the parameters
$\mu_0$, $\lambda$, and $\nu$. It was shown that the transition of these parameters through critical values can lead to stabilization or losing the stability of the autoresonant solutions via a non-autonomous version of the centre-saddle bifurcation~\cite{Hansmann07}. The behaviour of the autoresonant solutions near the bifurcation points has not been discussed previously. In this paper the stability and the long-term asymptotics for such solutions are investigated.

In the first step, particular autoresonant solutions with unboundedly growing amplitude having power-law asymptotics at infinity are considered, and the partition of a parameter plane is outlined. Then, the stability of the particular solutions is discussed at the bifurcation points. The stability of some solutions can be investigated by trivial linear analysis failing in the study of the other ones, where nonlinear terms of equations must be considered. The stability of these solutions is analysed by constructing of suitable Lyapunov functions. The presence of stability ensures the existence of a family of autoresonant solutions with a similar asymptotic behaviour. In the last step, the asymptotic behaviour of such solutions is analyzed.

\section{Particular autoresonant solutions}

The asymptotics for the particular solutions $\rho_\ast(\tau)$, $\psi_\ast(\tau)$ with the unbounded amplitude and the bounded phase mismatch are constructed in the form of power series with constant coefficients:
\begin{gather}
    \label{PAS}
        \rho(\tau)=\rho_{-1}\tau^{1/2}+\rho_0+\sum_{k=1}^{\infty} \rho_k \tau^{-k/2}, \quad \psi(\tau)=\psi_0+\sum_{k=1}^\infty\psi_k\tau^{-k/2}, \quad \tau\to\infty.
\end{gather}
The substitution these series into system \eqref{MS} and the grouping the terms of the same power of
$\tau$ lead to finding the coefficients $\rho_{-1}= \sqrt\lambda$, $\rho_0=0$, and $\psi_0=\sigma$, where $\sigma$ is one of the roots to the following equation:
\begin{gather}
    \label{TEQ}
    \mathcal P(\sigma;\delta,\nu):= \delta\sin (2\sigma+\nu)-\sin\sigma=0,  \quad \delta:=\mu_0 \sqrt{ \lambda}.
\end{gather}
The last equation has a different number of roots depending on the values of $\delta$ and $\nu$.
If the inequality $\mathcal P'(\sigma;\delta,\nu)\neq 0$ holds, all the remaining coefficients $\rho_k$, $\psi_k$ as $k\geq 1$
are determined from the chain of linear equations:
\begin{align}
    \label{SAG}
        \begin{split}
            2 \sqrt\lambda \rho_k & = \mathcal A_k^1(\rho_{-1},\dots,\rho_{k-1}, \sigma, \psi_1,\dots,\psi_{k-1}),  \\
            \mathcal P'(\sigma;\delta,\nu)\psi_k & =\mathcal B_k^1(\rho_{-1},\dots,\rho_{k-1}, \sigma,\psi_1,\dots,\psi_{k-1}),
    \end{split}
\end{align}
where $\mathcal A_1^1=0$, $\mathcal B_1^1=-\rho_{-1}/2$,
\begin{align*}
    & \mathcal A_2^1=\frac{1}{\sqrt\lambda}\big(\delta \cos (2\sigma+\nu)-\cos\sigma\big),\\
    & \mathcal A_3^1=-\frac{\psi_1}{\sqrt\lambda} \big(2 \delta \sin(2\sigma+\nu)-\sin\sigma\big),  \\
    & \mathcal B_2^1=-\mathcal P''(\sigma;\delta,\nu)\frac{\psi_1^2}{2}-\mu_1 \rho_{-1}\sin(2\sigma+\nu), \\
    &\mathcal  B_3^1=  -\mathcal P''(\sigma;\delta,\nu)\psi_1\psi_2 - \mathcal P'''(\sigma;\delta,\nu) \frac{\psi_1^3}{6}-\mu_0 \rho_2\sin(2\sigma+\nu),
\end{align*}
etc. In particular,
\begin{gather*}
\rho_1=0, \quad \psi_1=-\theta, \quad \theta:= \frac{\sqrt \lambda}{2 \mathcal P'(\sigma;\delta,\nu)}\neq 0.
\end{gather*}
The pair of the equations $\mathcal P(\sigma;\delta,\nu)=0$ and $\mathcal P'(\sigma;\delta,\nu)=0$ defines the bifurcation curves
\begin{gather*}
    \gamma_{\pm}:=\{(\delta,\nu)\in\mathbb R\times[0,\pi): \gamma(\delta,\nu)=0, \pm\delta>0\}
\end{gather*}
in the parameter plane $(\delta,\nu)$, where $\gamma(\delta,\nu) := (4\delta^2-1)^3-27 \delta^2\sin^2\nu$. It can easily be checked that system \eqref{SAG} is solvable whenever $(\delta,\nu)\not\in\gamma_{\pm}$. The bifurcation curves divide the parameter plane into the following parts (see Fig.~\ref{PPlane}):
\begin{align*}
   & \Omega_{\pm}:=\{(\delta,\nu)\in\mathbb R\times[0,\pi): \pm \gamma(\delta,\nu)>0\}.
\end{align*}
If $(\delta,\nu)\in\Omega_{+} $, the equation $\mathcal P(\sigma;\delta,\nu)=0$ has four different roots. In the case $(\delta,\nu)\in\Omega_{-}$, there are only two different roots (see Fig.~\ref{BD23}).

\begin{figure}
\centering
\includegraphics[width=0.4\linewidth]{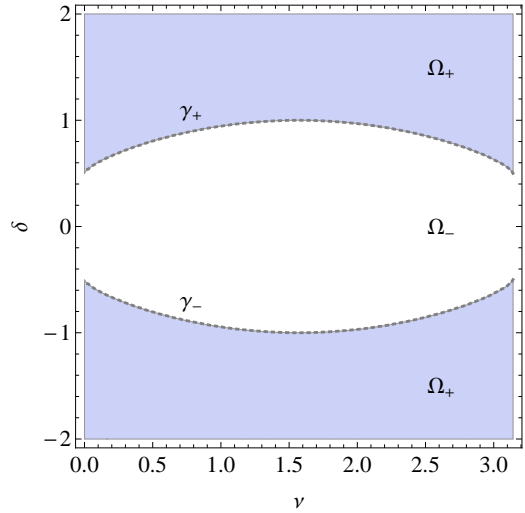}
\caption{\small Partition of the parameter plane.} \label{PPlane}
\end{figure}

\begin{figure}
\centering
\subfigure[$\nu=0$]{\includegraphics[width=0.4\linewidth]{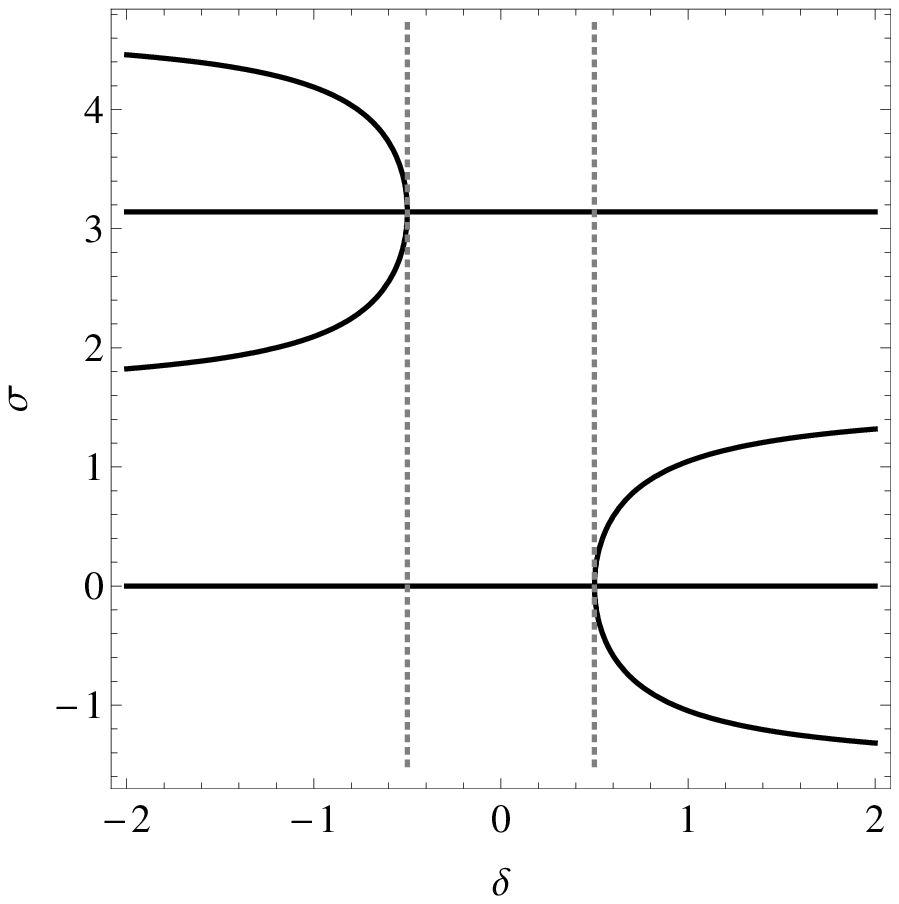}}
\hspace{4ex}
\subfigure[$\displaystyle \nu=\frac{\pi}{6}$]{\includegraphics[width=0.4\linewidth]{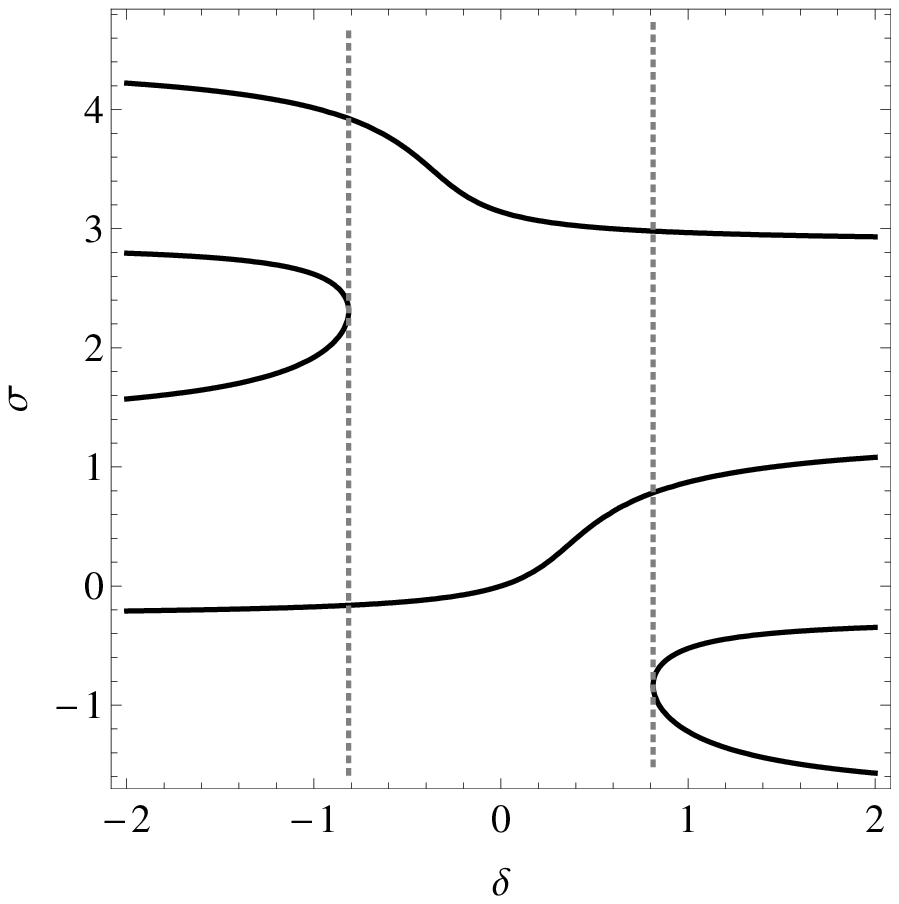}}
\caption{\small The roots to equation \eqref{TEQ} as functions of the parameter $\delta$. The vertical dotted lines correspond to $\gamma_-$ and $\gamma_+$.} \label{BD23}
\end{figure}

\begin{Th}[see~\cite{OSSIAM18}]
If $(\delta,\nu)\in\Omega_{+}$, system \eqref{MS} has four different solutions $\rho_\ast(\tau)$, $\psi_\ast(\tau)$ with asymptotic expansion in the form \eqref{PAS}.
If $(\delta,\nu)\in\Omega_{-}$, system \eqref{MS} has two different solutions $\rho_\ast(\tau)$, $\psi_\ast(\tau)$ with asymptotic expansion in the form  \eqref{PAS}.
\end{Th}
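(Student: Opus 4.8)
The plan is to prove both statements by the classical two-step scheme for solutions with power-type asymptotics: first produce a formal series solution of the form \eqref{PAS}, then show it is the asymptotic expansion of a true solution; the number of solutions will equal the number of admissible roots $\sigma$ of \eqref{TEQ}. The first step is essentially already done in the text above: substituting \eqref{PAS} into \eqref{MS} forces $\rho_{-1}=\sqrt\lambda$, $\rho_0=0$ and $\psi_0=\sigma$ with $\mathcal P(\sigma;\delta,\nu)=0$, and reduces the computation of all remaining coefficients to the triangular linear recursion \eqref{SAG}, which is uniquely solvable exactly when $\mathcal P'(\sigma;\delta,\nu)\ne0$. Since $\mathcal P$ and $\mathcal P'$ vanish simultaneously only on $\gamma_\pm$, for $(\delta,\nu)\in\Omega_\pm$ every root of \eqref{TEQ} is simple, and such a point admits four roots in $\Omega_+$ and two roots in $\Omega_-$; hence there are exactly four (resp.\ two) formal series \eqref{PAS}, one attached to each root.

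For the justification, fix a root $\sigma$ and a large integer $N$, write $\rho_N(\tau),\psi_N(\tau)$ for the corresponding partial sums of \eqref{PAS}, and seek a solution of \eqref{MS} in the form $\rho=\rho_N+R$, $\psi=\psi_N+\Phi$. By the construction of the coefficients, $X=(R,\Phi)$ satisfies a system
\[
\frac{dX}{d\tau}=A(\tau)\,X+F(\tau)+G(\tau,X),
\]
where $F(\tau)=O(\tau^{-(N+1)/2})$ is the truncation residual, $G(\tau,X)=O(|X|^2)$ uniformly for $|X|\le1$ and $\tau$ large (one uses here that $\rho$ stays comparable to $\sqrt{\lambda\tau}$, so the $\rho^{-1}$ terms in \eqref{MS} remain smooth), and the linear part is, up to $O(\tau^{-1/2})$ corrections,
\[
A(\tau)=\begin{pmatrix} 0 & -\mathcal P'(\sigma;\delta,\nu)\\ 2\sqrt\lambda\,\tau^{1/2} & 0 \end{pmatrix}.
\]
After a power rescaling balancing the two components, $A(\tau)$ is a slowly varying matrix whose fundamental system I would obtain by a WKB/near-identity diagonalization: two bounded oscillatory solutions with frequency $\sim\tau^{1/4}$ when $\mathcal P'(\sigma)>0$, and one exponentially growing and one exponentially decaying solution when $\mathcal P'(\sigma)<0$. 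Propagating the bounded/decaying modes forward from a large $\tau_0$ and every growing mode backward from $+\infty$, one rewrites \eqref{MS} as a fixed-point equation
\[
X(\tau)=\int K(\tau,s)\bigl[F(s)+G(s,X(s))\bigr]\,ds ,
\]
which, for $\tau_0$ large, is a contraction on a small ball in the space of continuous $X$ on $[\tau_0,\infty)$ with norm $\sup_{\tau\ge\tau_0}\tau^{(N+1)/2-\varepsilon}|X(\tau)|$, $\varepsilon>0$ small. Its fixed point yields a solution $(\rho_\ast,\psi_\ast)$ with $\rho_\ast-\rho_N,\ \psi_\ast-\psi_N=O(\tau^{-(N+1)/2+\varepsilon})$; letting $N\to\infty$ shows $(\rho_\ast,\psi_\ast)$ has \eqref{PAS} as its asymptotic expansion. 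Since $\psi_\ast(\tau)\to\sigma$ and distinct roots give distinct limits, the four (resp.\ two) solutions so obtained are pairwise different.

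I expect the main obstacle to be the last step, and within it the analysis of the non-autonomous linear system: $A(\tau)$ carries an entry growing like $\tau^{1/2}$ and, for $\mathcal P'(\sigma)<0$, a genuinely exponentially growing fundamental mode, so one must (i) derive sharp asymptotics of a fundamental matrix while controlling the $O(\tau^{-1/2})$ corrections — which are not integrable and may tilt the modes by slowly varying factors $\tau^{\pm\beta}$ — precisely enough that the weighted contraction still closes for every large $N$, and (ii) split the Green-type kernel $K(\tau,s)$ correctly between the endpoints $\tau_0$ and $+\infty$ so that no growing contribution survives. The remaining items — matching the series, estimating $F$ and $G$, and the rescaling — are routine.
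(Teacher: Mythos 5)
Your formal-series step coincides with the paper's: substitution of \eqref{PAS} into \eqref{MS} fixes $\rho_{-1}=\sqrt\lambda$, $\rho_0=0$, $\psi_0=\sigma$ with $\mathcal P(\sigma;\delta,\nu)=0$, and the recursion \eqref{SAG} is uniquely solvable precisely because all roots of \eqref{TEQ} are simple off the curves $\gamma_\pm$, the number of roots (four in $\Omega_+$, two in $\Omega_-$) giving the number of solutions. Where you genuinely diverge is the justification that each formal series is the asymptotic expansion of a true solution: the paper does not reprove this at all, but invokes the general theorems on solutions associated with formal (power-law) solutions near a singular point~\cite{AK89,KF13}, and then uses comparison theorems~\cite{LKJMS14} to extend the solutions to the whole semi-axis $\tau\geq 0$; you instead propose a self-contained truncation-plus-contraction argument with a WKB diagonalization of the non-autonomous linearization and a Green-kernel splitting between bounded/decaying and growing modes. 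Your route buys independence from the cited machinery and makes the mechanism (saddle versus centre behaviour, $\tau^{1/4}$ frequencies, weighted norms) explicit, at the price of exactly the technical work you flag: in the centre case $\mathcal P'(\sigma;\delta,\nu)>0$ there is no exponential dichotomy, only oscillatory modes tilted by slowly varying factors coming from the $\mathcal O(\tau^{-1})$ real parts, so the kernel estimates and the closing of the contraction for every $N$ must be done carefully. Two points you should still add to make the argument complete: (i) the solutions produced for different truncation orders $N$ must be shown to coincide (or a bootstrap must show that the solution built for one large $N$ automatically admits the expansion to all orders), otherwise ``letting $N\to\infty$'' does not by itself yield a single solution with the full expansion \eqref{PAS}; (ii) your construction lives on $[\tau_0,\infty)$ with $\tau_0$ large, whereas the paper's statement of existence is accompanied (in its Theorem 2) by the extension to $\tau\geq 0$ via comparison theorems, which your scheme would need to import or reprove if that global statement is wanted.
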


\subsection{The roots of multiplicity 2 }
If $(\delta,\nu)\in\gamma_\pm$, there exists $\sigma$ such that $\mathcal P(\sigma;\delta,\nu)=0$ and  $\mathcal P'(\sigma;\delta,\nu)=0$. In addition, suppose that $\mathcal P''(\sigma;\delta,\nu)\neq 0$, then $\sigma$ is the root of multiplicity 2. In this case, system \eqref{SAG} is unsolvable and the solution cannot be constructed in the form of \eqref{PAS}. The asymptotic solutions are sought in the form
\begin{gather}
    \label{PAS2}
        \rho(\tau)=\rho_{-1}\tau^{1/2}+\rho_0+\sum_{k=1}^{\infty} \rho_k \tau^{-k/4}, \quad \psi(\tau)=\sigma+\sum_{k=1}^\infty\psi_k\tau^{-k/4}, \quad \tau\to\infty.
\end{gather}
Substituting the series in system \eqref{MS} and equating the terms of the same power of $\tau^{1/4}$, we get $\rho_{-1}=\sqrt\lambda$, $\rho_0=\rho_1=0$ and
\begin{gather}
\label{Psi12}
\mathcal P''(\sigma;\delta,\nu) \psi_1^2+\sqrt\lambda=0.
\end{gather}
Since $\mathcal P''(\sigma;\delta,\nu)= -3\sin\sigma$, it follows that the asymptotic solution in the form \eqref{PAS2} does not exist when $\sin \sigma<0$. If $\sin\sigma>0$, equation \eqref{Psi12} has two different roots:
\begin{gather*}
   \psi_1= \pm \phi, \quad  \phi:= \sqrt{-\frac{\sqrt\lambda}{\mathcal P''(\sigma;\delta,\nu)}}.
\end{gather*}
The remaining coefficients $\rho_k$, $\psi_k$ as $k\geq 2$ are determined from the following recurrent system of equations:
\begin{align*}
             2 \sqrt\lambda \rho_{k} & = \mathcal A^{2}_{k}(\rho_{-1},\dots,\rho_{k-1}, \sigma,\psi_1,\dots,\psi_{k-1}),  \\
            \mathcal P''(\sigma;\delta,\nu)\psi_k & = \mathcal B^{2}_k(\rho_{-1},\dots,\rho_{k-1}, \sigma, \psi_1,\dots,\psi_{k-1}),
 \end{align*}
where $\mathcal A^2_2= \mathcal A^2_3 = 0$,
\begin{align*}
 & \mathcal A^2_4=\frac{1}{\sqrt\lambda}\big(\delta \cos (2\sigma+\nu)-\cos\sigma\big),\\
    & \mathcal A^2_5=-\frac{\psi_1}{\sqrt\lambda} \big(2 \delta \sin(2\sigma+\nu)-\sin\sigma\big), \\
     & \mathcal B^2_2=-\mathcal P'''(\sigma;\delta,\nu)\frac{\psi_1^2}{6} , \\
     & \mathcal B^2_3=  -\mathcal P''(\sigma;\delta,\nu) \frac{\psi_2^2}{2\psi_1} - \mathcal P'''(\sigma;\delta,\nu) \frac{\psi_1 \psi_2}{2}- \mathcal P^{(4)}(\sigma;\delta,\nu) \frac{\psi_1^3}{24}-\frac{\mu_1 \sqrt\lambda}{\psi_1}\sin(2\sigma+\nu),
\end{align*}
etc.

\subsection{The roots of multiplicity 3}

Now suppose  $(\delta,\nu)\in\gamma_\pm$ and $\sigma$ is the root of equation \eqref{TEQ} such that  $\mathcal P'(\sigma;\delta,\nu)=0$ and $\mathcal P''(\sigma;\delta,\nu)=0$. These imply that $\nu=0$, $\sin \sigma=0$ and $\mathcal P'''(\sigma;\delta,\nu)= -3\cos\sigma\neq 0$. In this case, the asymptotic solutions are constructed in the following form:
\begin{gather}
    \label{PAS3}
        \rho (\tau)=\rho_{-1}\tau^{1/2}+\rho_0+\sum_{k=1}^{\infty} \rho_k \tau^{-k/6}, \quad
        \psi (\tau)=\sigma+\sum_{k=1}^\infty\psi_k\tau^{-k/6}, \quad \tau\to\infty.
\end{gather}
It can easily be checked that
\begin{gather*}
    \rho_{-1}=\sqrt\lambda, \quad  \rho_0=\rho_1=0, \quad \psi_1 =- \chi, \quad \chi :=  \Big[\frac{3 \sqrt\lambda}{\mathcal P'''(\sigma;\delta,\nu) }\Big]^{1/3}.
\end{gather*}
The remaining coefficient $\rho_k$, $\psi_k$ as $k\geq 2$ are determined from the following chain of equations:
\begin{align*}
            2 \sqrt\lambda \rho_{k} & = \mathcal A^3_{k}(\rho_{-1},\dots,\rho_{k-1}, \sigma,\psi_1,\dots,\psi_{k-1}),  \\
            \mathcal P'''(\sigma;\delta,\nu)\psi_k & = \mathcal B^3_k(\rho_{-1},\dots,\rho_{k-1}, \sigma,\psi_1,\dots,\psi_{k-1}),
\end{align*}
where  $\mathcal A^3_2=\mathcal A^3_3=\mathcal A^3_4=\mathcal A^3_5 = 0$,
\begin{align*}
    &\mathcal A^3_6=\frac{1}{\sqrt\lambda}\big(\delta \cos 2\sigma-\cos\sigma\big), \quad \mathcal A^3_7=-\frac{\psi_1}{\sqrt\lambda}\big (2 \delta \sin2\sigma-\sin\sigma\big), \\
    & \mathcal B^3_2=-\mathcal P^{(4)}(\sigma;\delta,\nu)\frac{\psi_1^2}{12}, \quad
    \mathcal B^3_3=  - \mathcal P'''(\sigma;\delta,\nu) \frac{\psi_2^2}{\psi_1} -\mathcal P^{(4)}(\sigma;\delta,\nu)\frac{\psi_1\psi_2}{3} -\mathcal P^{(5)}(\sigma;\delta,\nu) \frac{\psi_1^3 }{60},
\end{align*}
etc.

\begin{Th} Let $(\delta,\nu)\in\gamma_{+} \cup \gamma_{-}$ and $\sigma$ be a root of equation \eqref{TEQ}.
  \begin{itemize}
  \item If $\mathcal P'(\sigma;\delta,\nu)\neq 0$, then  there exists a solution $\rho_\ast(\tau)$, $\psi_\ast(\tau)$ with asymptotic expansion in the form \eqref{PAS} with $\psi_0=\sigma$.

  \item If $\mathcal P'(\sigma;\delta,\nu)=0$ and $\mathcal P''(\sigma;\delta,\nu)<0$, then system \eqref{MS} has two different solutions $\rho_{\ast}(\tau)$, $\psi_{\ast}(\tau)$ with asymptotic expansion in the form \eqref{PAS2}.

  \item If $\mathcal P'(\sigma;\delta,\nu)=0$ and $\mathcal P''(\sigma;\delta,\nu)=0$, then system \eqref{MS} has solution $\rho_{\ast}(\tau)$, $\psi_{\ast}(\tau)$ with asymptotic expansion in the form \eqref{PAS3}.
\end{itemize}
\end{Th}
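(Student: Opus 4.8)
The plan is to show that each of the three formal series written above is the asymptotic expansion of a genuine solution of \eqref{MS}, by the standard truncation-and-contraction scheme, and that the number of such solutions is $1$, $2$, $1$ in the three cases. The series themselves are already available: for a simple root the coefficients $\rho_k,\psi_k$ are determined uniquely and recursively by the chain \eqref{SAG}, solvable at each step precisely because $\mathcal P'(\sigma;\delta,\nu)\neq0$; for a double root the hypothesis $\mathcal P''(\sigma;\delta,\nu)<0$ makes $\phi$ real, so $\psi_1=\pm\phi$, and the subsequent recurrence is solvable because $\psi_1\neq0$; for a triple root one has $\sin\sigma=0$, whence $\mathcal P'''(\sigma;\delta,\nu)=-3\cos\sigma\neq0$ and $\chi$ is well defined, and again the recurrence fixes $\rho_k,\psi_k$ for $k\ge2$. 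So only the justification step is genuinely new.

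Fix a large integer $N$ and let $\tilde\rho_N(\tau),\tilde\psi_N(\tau)$ be the partial sums of the relevant series (\eqref{PAS}, \eqref{PAS2} or \eqref{PAS3}) through order $N$; by construction, substituting them into \eqref{MS} leaves a residual of order $\tau^{-(N+1)/p}$ with $p=2,4,6$ respectively. I would seek a solution in the form
\begin{gather*}
    \rho(\tau)=\tilde\rho_N(\tau)+a(\tau)\,r(\tau),\qquad \psi(\tau)=\tilde\psi_N(\tau)+w(\tau),
\end{gather*}
with $a(\tau)$ a fixed power of $\tau$ chosen so that the amplitude and phase corrections enter the linearization on an equal footing. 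The pair $(r,w)$ then satisfies a system
\begin{gather*}
    \frac{d}{d\tau}\begin{pmatrix} r\\ w\end{pmatrix}=\Lambda_N(\tau)\begin{pmatrix} r\\ w\end{pmatrix}+\mathbf f_N(\tau)+\mathbf g_N(\tau,r,w),
\end{gather*}
where $\mathbf f_N(\tau)=\mathcal O(\tau^{-q_N})$ with $q_N\to\infty$ as $N\to\infty$ is the (rescaled) residual, $\mathbf g_N$ collects the terms at least quadratic in $(r,w)$ with $\tau$-coefficients bounded on bounded sets, and $\Lambda_N$ is the linearization of \eqref{MS} along the truncated solution. A direct expansion shows that, to leading order, $\Lambda_N$ is off-diagonal: its lower-left (amplitude-to-phase) entry is $\sim-2\sqrt\lambda\,\tau^{1/2}$, while the phase-to-amplitude coefficient, before the gauge, is $\sim-\mathcal P'(\sigma;\delta,\nu)$ for a simple root, $\sim-\psi_1\mathcal P''(\sigma;\delta,\nu)\,\tau^{-1/4}$ for a double root and $\sim-\tfrac12\psi_1^2\mathcal P'''(\sigma;\delta,\nu)\,\tau^{-1/3}$ for a triple root. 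Hence, after the gauge and after eliminating one component, $\dot y=\Lambda_N(\tau)y$ becomes a slowly varying scalar second-order equation whose solutions have WKB (adiabatic) asymptotics, the effective frequency growing like $\tau^{1/4}$ in the first case and like $\tau^{1/8}$, resp.\ $\tau^{1/12}$, in the degenerate ones, its type — oscillatory or hyperbolic — being fixed by the sign of the product of the two off-diagonal entries.

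The core of the argument is the resulting estimate for the fundamental matrix $Y_N(\tau)$ of $\dot y=\Lambda_N(\tau)y$: from the WKB asymptotics (equivalently, from the known asymptotics of the scalar equation) one obtains $\|Y_N(\tau)Y_N(s)^{-1}\|\le C(\tau/s)^{\kappa}$ for $\tau\ge s\ge\tau_0$ with small $\kappa$ in the oscillatory case, and an exponential-dichotomy-type bound in the hyperbolic case. With this, the variation-of-parameters operator
\begin{gather*}
    \mathcal T[r,w](\tau):=\int_{\tau}^{\infty}Y_N(\tau)Y_N(s)^{-1}\big(\mathbf f_N(s)+\mathbf g_N(s,r(s),w(s))\big)\,ds,
\end{gather*}
with the integration path split according to the stable and unstable subspaces in the hyperbolic case, maps a small ball in $C([\tau_0,\infty))$ with a suitable power weight into itself and is a contraction there once $N$ and $\tau_0$ are large enough; this uses $q_N\to\infty$ together with the quadratic nature of $\mathbf g_N$. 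Its unique fixed point, which in fact decays like $\tau^{-q_N}$, yields a solution of \eqref{MS} agreeing with $\tilde\rho_N,\tilde\psi_N$ up to such a remainder, and, $N$ being arbitrary, this solution has precisely the claimed asymptotic expansion. The count $1$, $2$, $1$ follows from the number of admissible leading phases: $\psi_1=-\theta$ is unique for a simple root, $\psi_1=\pm\phi$ gives two distinct (and genuinely different) fixed-point problems for a double root, and $\psi_1=-\chi$ is the unique real cube root for a triple root.

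The main obstacle I expect is the fundamental-matrix estimate in the degenerate cases. There the effective frequency grows only like $\tau^{1/8}$ or $\tau^{1/12}$, so the WKB error terms decay slowly, and one must retain sufficiently many terms of the partial sums and choose the gauge $a(\tau)$ with care so that $\mathbf f_N$ and the quadratic remainder $\mathbf g_N$ decay fast enough for the contraction to close; moreover, one of the two double-root branches is of hyperbolic type, and — as one must check — so may be the triple-root branch, in which case the shadowing solution has to be selected on a stable manifold rather than obtained from a global boundedness bound. By contrast, in the non-degenerate case $\mathcal P'(\sigma;\delta,\nu)\neq0$ the second-order reduction is immediate and the required estimate is classical.
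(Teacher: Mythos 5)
Your construction is essentially correct, but it is not the route the paper takes: the paper's proof of this theorem is a two-line citation. The existence of genuine solutions with the formal expansions \eqref{PAS}, \eqref{PAS2}, \eqref{PAS3} for $\tau\geq\tau_\ast$ is deduced from the general results of \cite{AK89,KF13} on solutions possessing a prescribed formal power-law asymptotic expansion, and the extension of these solutions to the whole semi-axis $\tau\geq 0$ is obtained from the comparison theorems of \cite{LKJMS14}. What you propose --- truncating the series, writing the system for the remainder, estimating the fundamental matrix of the linearization (oscillatory with frequency of order $\tau^{1/4}$, $\tau^{1/8}$, $\tau^{1/12}$, or of saddle type, according to the sign of $\mathcal P'$, of $\psi_1\mathcal P''$, of $\mathcal P'''$), and closing a contraction with the unstable component integrated back from infinity on the hyperbolic branches --- is precisely the machinery that underlies the cited theorems, so in effect you reprove the needed special case by hand. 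The citation buys brevity and avoids the delicate WKB/dichotomy estimates that you yourself flag as the main obstacle in the degenerate cases, where the frequency grows slowly and many terms of the partial sums must be retained; your version buys a self-contained argument that also makes explicit the centre/saddle alternative, which the paper only extracts afterwards in the linear analysis of Section 3, and your identification of the leading off-diagonal entries and of the solution count (one per simple root, two for the double root via $\psi_1=\pm\phi$, one for the triple root) agrees with the paper's formulas for $\det{\bf\Lambda}(\tau)$.

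One point you omit which the paper's proof covers: your fixed-point argument produces solutions only on an interval $[\tau_0,\infty)$ with $\tau_0$ large, whereas the paper additionally invokes the comparison theorems of \cite{LKJMS14} to guarantee that these solutions continue, without blow-up, to all $\tau\geq 0$. If the theorem is read as asserting solutions of \eqref{MS} on the whole semi-axis, your argument needs to be supplemented by such a continuation step (or by an equivalent a priori bound backward in $\tau$); this is a minor addition, not a flaw in the asymptotic construction itself.
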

\begin{proof}
The existence of particular solutions $\rho_\ast(\tau)$, $\psi_\ast(\tau)$ with the asymptotics \eqref{PAS}, \eqref{PAS2}, or \eqref{PAS3} as $\tau\geq \tau_\ast$, $\tau_\ast={\hbox{\rm const}}>0$ follows, for instance, from~\cite {AK89,KF13}, while the comparison theorems~\cite{LKJMS14} applied to system \eqref{MS} guarantee that the solutions can be extended to the semi-axis $\tau\geq 0$.
\end{proof}

\section{Stability analysis}

\subsection{ Linear analysis} Let $\rho_\ast(\tau)$, $\psi_\ast(\tau)$ be one of the particular autoresonant solutions with asymptotics \eqref{PAS}, \eqref{PAS2} or \eqref{PAS3}.
The substitution $\rho(\tau)=\rho_\ast(\tau)+r(\tau)$, $\psi(\tau)=\psi_\ast(\tau)+p(\tau)$ into \eqref{MS} gives the following system with a fixed point at $(0,0)$:
\begin{align}
\label{ShiftSys}
\begin{split}
&\frac{dr}{d\tau}=\sin(\psi_\ast+p)-\sin\psi_\ast + \mu(\tau)\big(\rho_\ast\sin(2\psi_\ast+\nu)- (\rho_\ast+r)\sin(2\psi_\ast+2p+\nu)\big), \\
&\frac{dp}{d\tau}=2\rho_\ast r+r^2+\frac{\cos(\psi_\ast+p)}{\rho_\ast+r}-\frac{\cos\psi_\ast}{\rho_\ast}+ \mu(\tau)\big(\cos(2\psi_\ast+\nu)-\cos(2\psi_\ast+2p+\nu)\big).
\end{split}
\end{align}
Consider the linearized system:
\begin{gather*}
    \frac{d}{d\tau}\begin{pmatrix} r \\ p \end{pmatrix} = {\bf \Lambda}(\tau) \begin{pmatrix} r \\ p \end{pmatrix}, \ \
    {\bf \Lambda}(\tau):=
        \begin{pmatrix}
            \displaystyle  -\mu(\tau) \sin(2\psi_\ast+\nu) & \displaystyle \cos\psi_\ast-2  \rho_\ast \mu(\tau) \cos(2\psi_\ast+\nu) \\
            \displaystyle 2\rho_\ast - \frac{\cos\psi_\ast}{\rho_\ast^2} & \displaystyle 2 \mu  (\tau)\sin(2\psi_\ast+\nu)-\frac{\sin\psi_\ast}{\rho_\ast} \end{pmatrix}.
\end{gather*}
Define
\begin{gather*}
    \hat\rho(\tau):=    \frac{\rho_\ast(\tau)}{\sqrt{\lambda\tau}}-1, \quad   \hat\psi(\tau):=\psi_\ast(\tau)-\sigma,
\end{gather*}
for $\tau\geq 0$,
where $\sigma$ is one of the roots to equation \eqref{TEQ}. Then the functions $\hat \rho(\tau)$ and $\hat \psi(\tau)$ have the following asymptotics as $\tau\to \infty$: $\hat\rho(\tau)=\mathcal O(\tau^{-3/2})$ and
\begin{itemize}
  \item $\hat\psi(\tau)=-\theta \tau^{-1/2}+\mathcal O(\tau^{-1})$ if $\sigma$ is the simple root,
  \item $\hat\psi(\tau)=\pm \phi \tau^{-1/4}+\mathcal O(\tau^{-1/2})$ if $\sigma$ is the root of multiplicity 2,
  \item  $\hat\psi(\tau)=-\chi \tau^{-1/6}+\mathcal O(\tau^{-1/3})$ if $\sigma$ is the root of multiplicity 3.
\end{itemize}
Then the roots of the corresponding characteristic equation $|{\bf \Lambda}(\tau)-z {\bf I}|=0$ can be represented in the form
\begin{gather*}
    z_{\pm}(\tau)=\frac{1}{2}\Big( {\hbox{\rm tr}} \big( {\bf\Lambda}(\tau) \big) \pm \sqrt{{\hbox{\rm det}} \big({\bf\Lambda}(\tau)\big) }\Big),
\end{gather*}
where
\begin{gather*}
    \begin{split}
    & {\hbox{\rm tr}} \big( {\bf\Lambda}(\tau) \big) =-\frac{1}{2\tau}+ o(\tau^{-1}), \\
    & {\hbox{\rm det}}  \big({\bf\Lambda}(\tau)\big)=-8 \sqrt{\lambda\tau}\Big(\mathcal P'(\sigma;\delta,\nu)+\mathcal P''(\sigma;\delta,\nu)\hat\psi+\mathcal P'''(\sigma;\delta,\nu)\frac{\hat\psi^2}{2}+\mathcal O(\hat \psi^3)+\mathcal O(\tau^{-3/2})\Big)
    \end{split}
\end{gather*}
as $\tau\to \infty$ and $\hat\psi\to 0$.
Therefore, if $\sigma$ is the simple root of \eqref{TEQ} such that $\mathcal P'(\sigma;\delta,\nu)<0$, the leading asymptotic terms of $z_\pm(\tau)$ are real of different signs:
\begin{gather*}
z_\pm(\tau)=\pm  (4\lambda \tau)^{1/4} \sqrt{ -\mathcal P'(\sigma;\delta,\nu)}+\mathcal O(\tau^{-1/4}),\quad \tau\to\infty.
\end{gather*}
This implies that the fixed point $(0,0)$ of \eqref{ShiftSys} is a saddle in the asymptotic limit, and the corresponding solutions $\rho_\ast(\tau)$, $\psi_\ast(\tau)$ to system \eqref{MS} are unstable.
Similarly, if $\sigma$ is the root  of multiplicity 2 such that $\mathcal P''(\sigma;\delta,\nu)<0$ and $\hat\psi(\tau)=\phi\tau^{-1/4}+\mathcal O(\tau^{-1/2})$, $\phi>0$, then
\begin{gather*}
z_\pm(\tau)=\pm  (4\lambda)^{1/4} \tau^{1/8}\sqrt{-\phi\mathcal P''(\sigma;\delta,\nu)}+\mathcal O(\tau^{-1/8}),\quad \tau\to\infty.
\end{gather*}
In this case the fixed point $(0,0)$ of \eqref{ShiftSys} and the corresponding particular solution to system \eqref{MS} are both unstable.
In the same way, the fixed point of the linearized system is unstable when $\sigma$ is the root  of multiplicity 3 and $\mathcal P'''(\sigma;\delta,\nu)<0$. In this case, the eigenvalues have the following asymptotics:
\begin{gather*}
z_\pm(\tau)=\pm \lambda^{1/4} \tau ^{1/12} \sqrt{- \chi^2 \mathcal P'''(\sigma;\delta,\nu)}+\mathcal O(\tau^{-1/12}),\quad \tau\to\infty.
\end{gather*}

Thus we have
\begin{Th} Let $\sigma$ be a root of equation \eqref{TEQ}.
 \begin{itemize}
   \item If  $\mathcal P'(\sigma;\delta,\nu)<0$, the solution $\rho_\ast(\tau)$, $\psi_\ast(\tau)$ with asymptotics \eqref{PAS} is unstable.
   \item If $\mathcal P'(\sigma;\delta,\nu)=0$ and $\mathcal P''(\sigma;\delta,\nu)<0$, the solution $\rho_\ast(\tau)$, $\psi_\ast(\tau)$ with asymptotics \eqref{PAS2}, $\psi_1=\phi$ is unstable.
   \item If $\mathcal P'(\sigma;\delta,\nu)=0$, $\mathcal P''(\sigma;\delta,\nu)=0$, and $\mathcal P'''(\sigma;\delta,\nu)<0$, the solution $\rho_\ast(\tau)$, $\psi_\ast(\tau)$ with asymptotics \eqref{PAS3} is unstable.
  \end{itemize}
\label{ThExpUnst}
\end{Th}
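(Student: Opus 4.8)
The plan is to prove that the fixed point $(0,0)$ of the shifted system \eqref{ShiftSys} is Lyapunov unstable by constructing a Chetaev function, uniformly in the three cases; this yields instability of the corresponding solutions $\rho_\ast(\tau)$, $\psi_\ast(\tau)$ of \eqref{MS}. First I would extract from the computation preceding the theorem the sign structure of the linear part. Put $a(\tau):=\Lambda_{12}(\tau)$ and $b(\tau):=\Lambda_{21}(\tau)$. Since $\rho_\ast(\tau)\sim\sqrt{\lambda\tau}$ one has $b(\tau)=2\rho_\ast-\cos\psi_\ast/\rho_\ast^2\sim2\sqrt{\lambda\tau}>0$ for $\tau\gg1$, and since $a(\tau)=-\mathcal P'(\psi_\ast;\delta,\nu)+\mathcal O(\tau^{-1})$ while $\mathcal P(\sigma;\delta,\nu)=0$, the asymptotics of $\hat\psi(\tau)$ combined with the three hypotheses give $a(\tau)>0$ for all large $\tau$: $a(\tau)\to-\mathcal P'(\sigma)>0$ for a simple root, $a(\tau)\sim-\phi\,\mathcal P''(\sigma)\,\tau^{-1/4}>0$ for a double root on the branch $\psi_1=\phi$, and $a(\tau)\sim-\tfrac12\chi^2\mathcal P'''(\sigma)\,\tau^{-1/3}>0$ for a triple root. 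With $\mathrm{tr}\,{\bf\Lambda}(\tau)=\mathcal O(\tau^{-1})$ this means the linear part of \eqref{ShiftSys} is $r'=a(\tau)p+\mathcal O(\tau^{-1/2})r$, $p'=b(\tau)r+\mathcal O(\tau^{-1/2})p$ with $a,b>0$, so its growing direction lies asymptotically in the open first quadrant $Q=\{r>0,\ p>0\}$.

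Next I would take $V(r,p)=rp$ on $\mathcal U:=Q\cap\{r^2+p^2<\varepsilon^2\}$ with $\varepsilon>0$ small, so that $V>0$ on $\mathcal U$ and $V=0$ on the two coordinate pieces of $\partial\mathcal U$ through the origin. Along \eqref{ShiftSys},
\[
\dot V=a(\tau)\,p^2+b(\tau)\,r^2+\bigl(\mathrm{tr}\,{\bf\Lambda}\bigr)rp+g_1(\tau,r,p)\,p+g_2(\tau,r,p)\,r,
\]
where $g_{1,2}$ collect the quadratic‑and‑higher terms of \eqref{ShiftSys}; by the arithmetic–geometric mean inequality $a\,p^2+b\,r^2\ge2\sqrt{a(\tau)b(\tau)}\,rp=2\sqrt{a(\tau)b(\tau)}\,V$, and the trace term is negligible because $rp\le(a\,p^2+b\,r^2)/(2\sqrt{ab})$ and $\mathrm{tr}\,{\bf\Lambda}/\sqrt{ab}\to0$. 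The core step is to show $g_1p+g_2r$ is absorbed by these terms on $\mathcal U$. Using that the first equation of \eqref{ShiftSys} is affine in $r$, one expands $g_1=\beta_1(\tau)\,p^2+\mathcal O(p^3)+\mathcal O(\tau^{-1/2}|r||p|)$ and $g_2=r^2+\mathcal O(\tau^{-1/2}p^2)+\mathcal O(\tau^{-1/2}|r||p|)+\mathcal O(r^3)$, with $\beta_1(\tau)\to-\tfrac12\mathcal P''(\sigma;\delta,\nu)$. On $\mathcal U$ the leading nonlinear contribution $g_1p\sim\beta_1(\tau)p^3$ has a fixed sign, and the remaining terms are $\mathcal O\bigl(\tau^{-1/2}|r|p^2+\tau^{-1/2}r^2|p|+p^4+r^3\bigr)$; verifying that, for $\tau$ large and $\varepsilon$ small, these are bounded by a small fraction of $a\,p^2+b\,r^2$ together with $|\beta_1(\tau)|p^3$ (and of the quartic term in the triple‑root case) would give
\[
\dot V\ \ge\ \tfrac12\bigl(a(\tau)\,p^2+b(\tau)\,r^2\bigr)\ \ge\ \sqrt{a(\tau)b(\tau)}\;V\qquad\text{on }\mathcal U,\ \ \tau\ge\tau_0 .
\]
The same bookkeeping shows \eqref{ShiftSys} points strictly into $\mathcal U$ along the axis parts of $\partial\mathcal U$ (there $\dot r|_{r=0}=a(\tau)p+g_1|_{r=0}>0$ and $\dot p|_{p=0}=b(\tau)r+g_2|_{p=0}>0$), so $\mathcal U$ is positively invariant for $\tau\ge\tau_0$.

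The main obstacle is this absorption estimate in the degenerate cases. Near a hyperbolic fixed point one normally discards the nonlinearity as small relative to the linear part, but here $a(\tau)\to0$ for a double or triple root, so the cubic term $g_1p\sim\beta_1(\tau)p^3$ is in fact \emph{larger} than $a(\tau)p^2$, and one must use its sign rather than its size. Precisely the hypotheses supply this: $-\tfrac12\mathcal P''(\sigma)>0$ for the simple and double roots makes $\beta_1(\tau)p^3\ge0$ on $\mathcal U$ (where $p>0$) for large $\tau$, so that term only reinforces $\dot V>0$; for a triple root $\mathcal P''(\sigma)=0$ forces $\beta_1(\tau)\to0$, but then the cubic coefficient of $g_1$ tends to $-\tfrac16\mathcal P'''(\sigma)>0$, so again the dominant nonlinear term is non‑negative. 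Hence the "large" nonlinearity helps, and only the genuinely smaller, possibly signed, contributions need to be absorbed — which is why they must be weighed against $a\,p^2+b\,r^2$ (and the non‑negative cubic/quartic term) rather than against $\sqrt{ab}\,V$. Carrying out the Taylor bookkeeping — pinning down the $\tau$‑orders and signs of the coefficients of $g_1,g_2$ and checking that every cross term carries enough negative powers of $\tau$ — is the bulk of the work.

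To conclude: $\sqrt{a(\tau)b(\tau)}\to\infty$ with $\int^{\infty}\sqrt{a(\tau)b(\tau)}\,d\tau=\infty$ in all three cases, so any solution of \eqref{ShiftSys} lying in $\mathcal U$ at some time $\tau_1\ge\tau_0$ has $V(\tau)\ge V(\tau_1)\exp\!\int_{\tau_1}^{\tau}\sqrt{a(s)b(s)}\,ds\to\infty$ while it stays in $\mathcal U$; since $V\le\varepsilon^2/2$ on $\mathcal U$, the solution must leave $\mathcal U$, and by positive invariance of the axes it does so across $\{r^2+p^2=\varepsilon^2\}$. As $\mathcal U$ contains points $(\delta,\delta)$ with $\delta$ arbitrarily small, the fixed point $(0,0)$ is unstable for initial times $\ge\tau_0$, and a standard argument (continuous dependence applied to the forward image of a small ball under the flow of \eqref{ShiftSys}) upgrades this to instability on the whole half‑line. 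This gives the instability asserted for the solutions with asymptotics \eqref{PAS}, \eqref{PAS2} ($\psi_1=\phi$), and \eqref{PAS3} in the three respective cases.
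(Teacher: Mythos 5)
Your proposal is correct in substance but takes a genuinely different route from the paper. The paper proves Theorem~\ref{ThExpUnst} purely by linear analysis: it computes the asymptotics of the roots $z_\pm(\tau)$ of the characteristic equation for the frozen linearization ${\bf\Lambda}(\tau)$ and observes that, under each of the three hypotheses, they are real, of opposite signs and growing like $\tau^{1/4}$, $\tau^{1/8}$, $\tau^{1/12}$ respectively, so that $(0,0)$ is a saddle in the asymptotic limit; instability of the nonlinear system \eqref{ShiftSys} is then inferred from this. You instead work directly with the nonlinear system, using only the sign structure $\Lambda_{12}(\tau)\approx-\mathcal P'(\psi_\ast;\delta,\nu)>0$, $\Lambda_{21}(\tau)\approx2\sqrt{\lambda\tau}>0$ (the same information that makes the paper's $z_\pm$ real and of opposite signs), and you build the Chetaev function $V=rp$ on the first quadrant with the differential inequality $\dot V\ge\sqrt{a(\tau)b(\tau)}\,V$ and divergence of $\int\sqrt{ab}\,d\tau$ forcing escape. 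What your route buys is a self-contained treatment of the nonlinear terms — for non-autonomous systems with time-dependent eigenvalues, instability does not follow automatically from pointwise spectral data, so your argument is in this respect more complete, and it is close in spirit to the Lyapunov-function instability arguments the paper itself uses in Theorem~\ref{cgs}; what the paper's route buys is brevity and eigenvalue asymptotics that are reused elsewhere. One repair is needed in your sign bookkeeping: the claim that the hypotheses give $-\tfrac12\mathcal P''(\sigma;\delta,\nu)>0$ in the simple-root case is false in general, since at any root of \eqref{TEQ} one has $\mathcal P''(\sigma;\delta,\nu)=-3\sin\sigma$, and simple roots with $\mathcal P'(\sigma;\delta,\nu)<0$ and $\sin\sigma<0$ do occur (e.g.\ $\nu=0$, $\delta>1/2$, $\cos\sigma=1/(2\delta)$, $\sin\sigma<0$). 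This is harmless for your scheme: in that case $a(\tau)\to-\mathcal P'(\sigma;\delta,\nu)>0$ is bounded away from zero, so the cubic term $\beta_1(\tau)p^3=\mathcal O(\varepsilon)\,p^2$ is absorbed by $a(\tau)p^2$ for small $\varepsilon$ regardless of its sign — the ordinary hyperbolic absorption you already describe. The favourable sign of the dominant nonlinearity is genuinely needed only in the degenerate cases, and there your identifications ($\beta_1\to-\tfrac12\mathcal P''(\sigma;\delta,\nu)>0$ for the double root with $\psi_1=\phi$, and cubic coefficient tending to $-\tfrac16\mathcal P'''(\sigma;\delta,\nu)>0$ for the triple root) are correct, so the remaining Taylor bookkeeping you postpone is routine.
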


Let us consider the following three cases that are not covered by Theorem~\ref{ThExpUnst}:
\begin{description}
  \item[Case I] $\mathcal P'(\sigma;\delta,\nu)>0$.
  \item[Case II] $\mathcal P'(\sigma;\delta,\nu)=0$, $\mathcal P''(\sigma;\delta,\nu)<0$ and $\hat\psi(\tau)=-\phi\tau^{-1/4}+\mathcal O(\tau^{-1/2})$ as $\tau\to\infty$.
  \item[Case III] $\mathcal P'(\sigma;\delta,\nu)=0$, $\mathcal P''(\sigma;\delta,\nu)=0$, and $\mathcal P'''(\sigma;\delta,\nu)>0$.
\end{description}
In these cases, the roots of the characteristic equation are complex. In particular,
\begin{gather*}
    \begin{array}{ll}
      z_\pm(\tau)=\pm i (4\lambda\tau)^{1/4} \sqrt{\mathcal P'(\sigma;\delta,\nu)}+\mathcal O(\tau^{-1/4}),&  \text{ in {\bf Case I}},\\
    z_\pm(\tau)=\pm i  (4\lambda)^{1/4} \tau^{1/8} \sqrt{-\phi \mathcal P''(\sigma;\delta,\nu)}+\mathcal O(\tau^{-1/8}),  &  \text{ in {\bf Case II}},\\
   z_\pm(\tau)=\pm i  \lambda^{1/4} \tau^{1/12}  \sqrt{ \chi^2 \mathcal P'''(\sigma;\delta,\nu)}+\mathcal O(\tau^{-1/12}),  &  \text{ in {\bf Case III}},
\end{array}
\end{gather*}
and $\Re z_\pm(\tau)=\mathcal O(\tau^{-1})$ as $\tau\to\infty$. In all three cases, the fixed point $(0,0)$ of system \eqref{ShiftSys} is a centre in the asymptotic limit. Note that the linear stability analysis fails when $\Re z_\pm(\tau)\to 0$ as $\tau\to\infty$ (see~\cite{HT94,KSDE13,OSSIAM18}), and the nonlinear terms of the equations must be taken into account.

\subsection{Lyapunov functions}

When linear stability analysis does not work, Lyapunov's method can be applied. This method assumes the existence of a locally positive definite function with a sign definite derivative along the trajectories of the system. The sign of the derivative determines whether the solution is stable or unstable. Although this method is well-developed, there is no systematic way to find appropriate Lyapunov functions. In this section, we construct such functions to prove the stability or the instability of the autoresonant solutions in different cases.

\begin{Th} Let $\sigma$ be a root of equation \eqref{TEQ}.
\label{cgs}
 \begin{itemize}
   \item In {\bf Case I}, the solution $\rho_\ast(\tau)$, $\psi_\ast(\tau)$ with asymptotics \eqref{PAS} is stable.
   \item In {\bf Case II}, the solution $\rho_\ast(\tau)$, $\psi_\ast(\tau)$ with asymptotics \eqref{PAS2}, $\psi_1=-\phi$ is unstable.
   \item In {\bf Case III}, the solution $\rho_\ast(\tau)$, $\psi_\ast(\tau)$  with asymptotics \eqref{PAS3} is unstable.
  \end{itemize}
\end{Th}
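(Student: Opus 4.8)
The plan is to study the equilibrium $(r,p)=(0,0)$ of the shifted system~\eqref{ShiftSys} by Lyapunov's second method in its non-autonomous version: in \textbf{Case~I} by exhibiting a positive definite function $V(r,p,\tau)$ whose derivative along the trajectories is nonpositive up to terms that do not destroy the boundedness of $V$, which gives stability; in \textbf{Cases~II} and~\textbf{III} by exhibiting a Chetaev function, i.e. a function positive on a cone with vertex at the origin and with positive derivative there, which gives instability. The form of $V$ is dictated by the near-Hamiltonian structure of~\eqref{MS}: in the variables $(\rho^2/2,\psi)$ this system is Hamiltonian, with
\[
 \mathcal H(\rho,\psi,\tau)=\frac{\rho^4}{4}-\frac{\lambda\tau\rho^2}{2}+\rho\cos\psi-\frac{\mu(\tau)}{2}\rho^2\cos(2\psi+\nu),\qquad
 \frac{\partial\mathcal H}{\partial\tau}=-\frac{\rho^2}{2}\Big(\lambda+\mu'(\tau)\cos(2\psi+\nu)\Big),
\]
and along the particular solution the map $\psi\mapsto\mathcal H(\rho_\ast(\tau),\psi,\tau)$ has $\psi$-derivative equal, to leading order as $\tau\to\infty$, to $\sqrt{\lambda\tau}\,\mathcal P(\psi;\delta,\nu)$. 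Thus $\psi=\sigma$ is a nondegenerate minimum of this ``phase potential'' when $\mathcal P'(\sigma;\delta,\nu)>0$ (Case~I), an inflection point when $\mathcal P'(\sigma;\delta,\nu)=0\neq\mathcal P''(\sigma;\delta,\nu)$ (Case~II), and a degenerate quartic extremum when $\mathcal P'(\sigma;\delta,\nu)=\mathcal P''(\sigma;\delta,\nu)=0$, $\mathcal P'''(\sigma;\delta,\nu)>0$ (Case~III); this trichotomy is the geometric reason for the three different outcomes.

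Concretely, I would take $V$ to be the shifted energy $\mathcal H(\rho_\ast+r,\psi_\ast+p,\tau)-\mathcal H(\rho_\ast,\psi_\ast,\tau)$ corrected by a term affine in $\rho^2-\rho_\ast^2$ with a $\tau$-dependent coefficient chosen to cancel the sign-indefinite first-order contribution $-\tfrac{\lambda}{2}(\rho^2-\rho_\ast^2)=-\lambda\rho_\ast r-\tfrac{\lambda}{2}r^2$ produced by $\partial_\tau\mathcal H$, together with further $\tau$-dependent quadratic, cubic (and in Case~III quartic) corrections in $(r,p)$ tuned so that the leading $rp$ cross term in $\dot V$ and the nonlinear terms with non-decaying coefficients — the $r^2$ term in the $\dot p$-equation of~\eqref{ShiftSys} and the $p^2$ term with $\mathcal O(1)$ coefficient in the $\dot r$-equation — drop out. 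After this normalisation the leading part of $V$ near the origin is a positive definite form in the amplitude deviation $\rho^2-\rho_\ast^2$ plus the phase part $\tfrac12\mathcal P'(\sigma;\delta,\nu)p^2+\tfrac16\mathcal P''(\sigma;\delta,\nu)p^3+\tfrac1{24}\mathcal P'''(\sigma;\delta,\nu)p^4+\cdots$, with suitable powers of $\tau$ as weights. In \textbf{Case~I} this $V$ is positive definite in a neighbourhood of $(0,0)$ whose size is uniform in $\tau$ (the weights being chosen so that $V$ still controls both $r$ and $p$), and one checks that $\dot V\le 0$ modulo a remainder integrable in $\tau$; hence $V$ stays bounded and $(0,0)$, and therefore the solution $\rho_\ast,\psi_\ast$ of~\eqref{MS}, is stable. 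In \textbf{Cases~II} ($\psi_1=-\phi$) and \textbf{III} the same $V$ is sign-indefinite at the origin: in Case~II because the cubic term with $\mathcal P''(\sigma;\delta,\nu)<0$ breaks definiteness and the particular solution approaches $\sigma$ from the side $\hat\psi=-\phi\tau^{-1/4}<0$ on which the phase potential is decreasing; in Case~III because, although the quartic term is positive, the particular solution sits at $\hat\psi=-\chi\tau^{-1/6}\neq0$ away from the bottom of a very flat well, so the restoring force is too weak to contain the effect of the nonlinear terms. In each of these cases I would restrict $-V$ (or a close variant) to a wedge-shaped region $\{(r,p):p<0,\ b(\tau)|r|<|p|<\varepsilon\}$ with vertex at the origin, verify $-V>0$ and $-\dot V>0$ there, and conclude instability by Chetaev's theorem; together with Theorem~\ref{ThExpUnst} this exhausts all sign cases.

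The main obstacle is that neither a time-independent nor a purely quadratic $V$ can work: system~\eqref{ShiftSys} carries quadratic nonlinearities with coefficients that do \emph{not} decay as $\tau\to\infty$, while the linear part has eigenvalues with real part only $\mathcal O(\tau^{-1})$ and trace $-\tfrac1{2\tau}+o(\tau^{-1})$ — so that cancelling the $rp$ cross term forces $\tau$-growing or $\tau$-decaying weights into $V$, and a crude estimate then gives only $\dot V\le c\,\tau^{-1}V$, which does not bound $V$ since $\tau^{-1}$ is not integrable. One must therefore carry out the construction order by order so that all non-integrable contributions to $\dot V$ cancel identically and only a convergent (or sign-definite) remainder is left; this is precisely where the $\tau$-dependent higher-order corrections and the exact asymptotics of $\rho_\ast,\psi_\ast$ from~\eqref{PAS},~\eqref{PAS2},~\eqref{PAS3} enter. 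A further technical point is that the three cases live on the different time scales $\hat\psi\sim\tau^{-1/2},\tau^{-1/4},\tau^{-1/6}$, so the relative scaling of $r$ and $p$, the admissible size $\varepsilon$, and the weight $b(\tau)$ in the wedge have to be chosen — and all error estimates redone — case by case. Finally, Cases~II and~III are genuinely nonlinear: their linearisations are centres, so Theorem~\ref{ThExpUnst} does not apply and the instability can be seen only through the Chetaev function, with the choice of the correct wedge being the delicate part.
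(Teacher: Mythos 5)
Your \textbf{Case I} plan is essentially the paper's proof: after shifting along $\rho_\ast,\psi_\ast$ (the paper additionally rescales the amplitude deviation, $\rho=\rho_\ast+\tau^{-1/4}R$, to get the near-Hamiltonian form \eqref{ham}), one takes the shifted energy $\tau^{-1/4}H$ and adds $\tau$-weighted cubic and cross-term corrections so that the non-sign-definite contributions cancel; the paper in fact obtains the strictly negative bound $\frac{dV_1}{d\tau}\le-\tau^{-1}l_\varkappa V_1/4$ (not merely an integrable remainder), which is what both closes the stability argument and later yields the decay rates. So that part is sound and coincides with the paper's route.

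The genuine gap is in \textbf{Cases II} and \textbf{III}. You yourself note that the frozen linearization there is a centre, with rotation frequency growing like $\tau^{1/8}$ (resp.\ $\tau^{1/12}$); consequently trajectories wind around the equilibrium and cross any sector $\{p<0,\ b(\tau)|r|<|p|<\varepsilon\}$ transversally through its lateral boundary. A Chetaev function must be positive inside the sector, vanish on that lateral boundary, and have positive derivative inside; along a trajectory that exits laterally such a function must decrease to zero, so no such pair (sector, function) can exist, and the wedge construction fails as a matter of principle — the instability here is not saddle-like escape along a ``downhill'' direction. Your heuristic for why $V$ is sign-indefinite also points the wedge to the wrong place: around the solution with $\psi_1=-\phi$ the effective curvature is $\omega_2^2=-\phi\,\mathcal P''(\sigma;\delta,\nu)>0$, the indefiniteness of $H$ only appears on the scale $\Psi\sim\tau^{-1/4}$ (resp.\ $\tau^{-1/6}$), and the descending side of the phase potential lies at $\Psi>0$, beyond the adjacent saddle-type solution $\psi_1=+\phi$ already handled by Theorem~\ref{ThExpUnst}, not at $p<0$. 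The mechanism the paper uses is different: rescale $R=\tau^{-3/8}r$, $\Psi=\tau^{-1/4}p$, $s=\tfrac89\tau^{9/8}$ (and analogously in Case III), after which the frozen Hamiltonian $h_2^0=\lambda^{1/2}r^2+\omega_2^2p^2/2+\mathcal P''(\sigma;\delta,\nu)p^3/6$ is positive definite near the origin, and show that the decaying non-Hamiltonian remainder \emph{pumps} this energy: the function $U_2=\mathcal H_2+s^{-1}rp/6$ satisfies $\frac{dU_2}{ds}\ge s^{-1}l_\varkappa U_2/3\ge0$, so $w_2$ grows like a power of $s$ and every nearby solution leaves a fixed neighbourhood. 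What separates Cases II--III from Case I is precisely the sign of this effective pumping/damping produced by the interplay of the scaling exponents (dictated by the multiplicity of the root $\sigma$) with the $\mathcal O(s^{-1})$ perturbation terms — a sign condition your proposal never identifies, and without which the instability claims for $\psi_1=-\phi$ and for the multiplicity-3 root are not established.
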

\begin{proof}
Let $\rho_\ast(\tau), \psi_\ast(\tau)$ be a one of the particular solutions with power-law asymptotics at infinity.
The change of variables
\begin{gather*}
\rho(\tau)=\rho_\ast(\tau)+ \tau^{-1/4} R(\tau),\quad \psi(\tau)=\psi_\ast(\tau)+\Psi(\tau)
\end{gather*}
transforms system \eqref{MS} into the form
\begin{gather}
    \label{ham}
    \frac{dR}{d\tau}=-\partial_\Psi H(R,\Psi,\tau), \quad
    \frac{d\Psi}{d\tau}=\partial_R H(R,\Psi,\tau)+F(R,\Psi,\tau),
\end{gather}
where
\begin{eqnarray*}
    H(R,\Psi,\tau)  & := & \tau^{-1/4} \rho_\ast  R^2  +\tau^{1/4}\big(\cos(\psi_\ast+\Psi)-\cos\psi_\ast+\Psi\sin\psi_\ast\big) \\
        &   & +\tau^{1/4} \frac{ \rho_\ast \mu}{2} \big( \cos(2\psi_\ast+\nu)-\cos(2\psi_\ast+2\Psi+\nu)-2\Psi \sin(2\psi_\ast+\nu)\big)\\
        &   & +\frac{  \mu R}{2} \big(\cos(2\psi_\ast+\nu)-\cos(2\psi_\ast+2\Psi+\nu)\big)    + \tau^{-1/2}\frac{R^3}{3}-\tau^{-1}\frac{R\Psi}{4}
\end{eqnarray*}
and
\begin{eqnarray*}
    F (R,\Psi,\tau) & := & \frac{\cos(\psi_\ast+\Psi)}{\rho_\ast+\tau^{-1/4}R}-\frac{\cos\psi_\ast}{\rho_\ast} + \frac{\mu}{2} \big(\cos(2\psi_\ast+\nu)-\cos(2\psi_\ast+2\Psi+\nu)\big)+\tau^{-1}\frac{\Psi}{4}.
\end{eqnarray*}
In the new variables $(R,\Psi)$, the system has equilibrium at the origin $(0,0)$. Note that a key role in the stability analysis is played by the asymptotic behaviour
of the right hand sides of the equations as $\tau\to\infty$ plays. Taking into account the asymptotic formulas for the particular solutions $\rho_\ast(\tau)=\sqrt{\lambda\tau} (1+\hat \rho(\tau))$ and $\psi_\ast(\tau)=\sigma+\hat\psi(\tau)$, we obtain
\begin{eqnarray*}
H & =  &\tau^{1/4}\Big( \lambda^{1/2} R^2 +  \int\limits_0^\Psi \mathcal P(\sigma+\zeta;\delta,\nu)
\,d\zeta \Big)   + \tau^{1/4} \hat\psi \Big(\mathcal P(\sigma+\Psi;\delta,\nu) - \mathcal P'(\sigma;\delta,\nu) \Psi  \Big) \\
&& + \tau^{1/4} \frac{\hat\psi^2}{2} \Big(\mathcal P'(\sigma+\Psi;\delta,\nu) - \mathcal P''(\sigma;\delta,\nu) \Psi  -\mathcal P'(\sigma;\delta,\nu)  \Big)  +\mathcal O(\tau^{1/4}\hat\psi^3 )+\mathcal O(\tau^{-1/2})
\end{eqnarray*}
and
\begin{eqnarray*}
F & =  &\tau^{-1/2}  \int\limits_0^\Psi \mathcal P(\sigma+\zeta;\delta,\nu)\,d\zeta  + \tau^{-1/2} \frac{\hat\psi}{\lambda^{1/2}}\mathcal P(\sigma+\Psi;\delta,\nu) \\
&& + \tau^{-1/2} \frac{\hat\psi^2}{2\lambda^{1/2}} \big(\mathcal P'(\sigma+\Psi;\delta,\nu) - \mathcal P'(\sigma;\delta,\nu) \big) + \tau^{-1/2} \frac{\hat\psi^3}{6\lambda^{1/2}} \big(\mathcal P''(\sigma+\Psi;\delta,\nu) - \mathcal P''(\sigma;\delta,\nu) \big) \\
&& + \tau^{-1}\frac{\Psi}{4} - \tau^{-5/4}\cos(\sigma+\Psi)\frac{R}{\lambda^{1/2}} +\mathcal O(\tau^{-1/2}\hat\psi^4)+\mathcal O(\tau^{-3/2})
\end{eqnarray*}
as $\tau\to\infty$ and for all $(R,\Psi)\in\mathcal B_{d_\ast}$, where
\begin{gather*}
    \mathcal B_{d_\ast}:=\{(R,\Psi)\in\mathbb R^2:  \sqrt{R^2+\Psi^2}\leq d_\ast\}, \quad d_\ast={\hbox{\rm const}}>0.
\end{gather*}
Note that these asymptotic formulas depend on the choice of the particular solution to system \eqref{MS} and the root of equation \eqref{TEQ}.

Consider {\bf Case I}, when $\sigma$ is the simple root to equation \eqref{TEQ} and the particular solution has asymptotics \eqref{PAS}. Hence $\hat\psi=-\theta \tau^{-1/2}+\mathcal O(\tau^{-1})$ as $\tau\to\infty$, and we get
\begin{gather*}
    H =\tau^{1/4}h_{-1}(R,\Psi)+ \tau^{-1/4} h_1(\Psi)+\mathcal O(\tau^{-1/2}), \quad
    F=\tau^{-1/2} f_2 (\Psi)+\tau^{-1} f_4(\Psi)+\mathcal O(\tau^{-5/4}),
\end{gather*}
where
\begin{gather*}
    h_{-1}:=  \lambda^{1/2} R^2 +  \int\limits_0^\Psi \mathcal P(\sigma+\zeta;\delta,\nu)\,d\zeta, \quad
        h_1 := - \theta  \big(\mathcal P(\sigma+\Psi;\delta,\nu)- \mathcal P'(\sigma;\delta,\nu) \Psi \big), \\
    f_2 :=   \int\limits_0^\Psi \mathcal P(\sigma+\zeta;\delta,\nu)\,d\zeta,\quad
    f_4 := \frac{\Psi}{4} - \frac{\theta }{\lambda^{1/2}} \mathcal P(\sigma+\Psi;\delta,\nu).
\end{gather*}
In this case, the combination
\begin{gather}
    \label{LF}
    V_1(R,\Psi,\tau):= v_{1}^0(R,\Psi,\tau)+\tau^{-3/4}v_{1}^1(R,\Psi)+\tau^{-5/4} v_{1}^2(R,\Psi)
\end{gather}
is used as a Lyapunov function candidate for system \eqref{ham}, where
\begin{gather*}
v_{1}^0=\tau^{-1/4} H(R,\Psi,\tau), \quad
v_{1}^1 = \frac{2 \lambda^{1/2} R^3}{3} + R f_2(\Psi), \quad
v_{1}^2= - \frac{ R\Psi}{8}.
\end{gather*}
Note that the functions $v_{1}^0(R,\Psi,\tau)$, $v_{1}^1(R,\Psi)$, and $v_{1}^2(R,\Psi)$ have the following asymptotics:
\begin{gather*}
v_{1}^0 =  \lambda^{1/2} R^2 +  \omega_1^2 \frac{\Psi^2}{2}+\mathcal O(d^3)+\mathcal O(\tau^{-1/2}d^2), \quad v_{1}^1=\mathcal O(d^3), \quad v_{1}^2=\mathcal O(d^2)
\end{gather*}
as $d=\sqrt{R^2+\Psi^2}\to 0$ and $\tau\to\infty$, where $\omega_1^2=\mathcal P'(\sigma;\delta,\nu)>0$.
The asymptotic estimates are uniform with respect to $(R,\Psi,\tau)$ in the domain
$\{(R,\Psi,\tau)\in\mathbb R^3: d\leq d_\ast, \tau\geq \tau_\ast\}$,
where $d_\ast$, $\tau_\ast={\hbox{\rm const}}>0$.
Then for all $0<\varkappa <1$ there exist $d_1>0$ and $\tau_1>0$ such that
\begin{gather*}
 (1-\varkappa ) w_1^2(R,\Psi)  \leq V_1(R,\Psi,\tau) \leq  (1+\varkappa ) w_1^2(R,\Psi)
\end{gather*}
for all $(R,\Psi,\tau)\in \mathcal D^{w_1}_{d_1,\tau_1}$, where
\begin{eqnarray*}
\mathcal D^{w_1}_{d_1,\tau_1}& := & \{(R,\Psi,\tau)\in\mathbb R^3: w_1(R,\Psi)\leq d_1, \tau\geq \tau_1\}, \\
w_1(R,\Psi)& :=& \sqrt{  \lambda^{1/2}R^2+\omega_1^2\frac{ \Psi^2}{2}} .
\end{eqnarray*}
Note that the combination of the functions $v_1^0$, $v_{1}^1$, and $v_{1}^2$ has the sign definite total derivative with respect to $\tau$. Indeed, it can easily be checked that
\begin{eqnarray*}
    \frac{dv_{1}^0}{d\tau}\Big|_\eqref{ham} & = & \tau^{-1/2}   \frac{\partial h_{-1}}{\partial \Psi}  f_2 + \tau^{-1} \Big(\frac{\partial  h_{-1}}{\partial \Psi} f_4 +\frac{\partial  h_{1}}{\partial \Psi} f_2 \Big)  + \mathcal O(\tau^{-5/4} d^2), \\
    &=& \tau^{-1/2}   \frac{\partial h_{-1}}{\partial \Psi}  f_2+ \tau^{-1} \Big(-\mathcal P'(\sigma;\delta,\nu)  \frac{\Psi^2}{4}   + \mathcal O(d^3)\Big)+ \mathcal O(\tau^{-5/4} d^2), \\
    \tau^{-3/4}\frac{dv_1^1}{d\tau}\Big|_\eqref{ham} & = & - \tau^{-1/2}\frac{\partial h_{-1}}{\partial \Psi}  f_2 +  \mathcal O(\tau^{-1} d^3)+\mathcal O(\tau^{-5/4} d^2), \\
    \tau^{-5/4}\frac{dv_1^2}{d\tau}\Big|_\eqref{ham} & =& \tau^{-1}  \frac{1}{4}  \Big(-  \lambda^{1/2} R^2+\mathcal \omega_1^2 \frac{\Psi^2}{2}+\mathcal O(d^3) \Big)+\mathcal O(\tau^{-5/4} d^2).
\end{eqnarray*}
Summing up the last expressions, we obtain
\begin{gather*}
\frac{dV_1}{d\tau}\Big|_\eqref{ham} = - \tau^{-1} \frac{1}{4} \Big( \lambda^{1/2} R^2 +\omega_1^2 \frac{\Psi^2}{2} +\mathcal O(d^3)\Big)+\mathcal O(\tau^{-5/4}d^2)
\end{gather*}
as $d\to 0$ and $ \tau\to\infty$.
Hence, for all $0<\varkappa <1$ there exist $d_2>0$ and $\tau_2>0$ such that
\begin{gather}
\label{V1est}
    \frac{dV_1}{d\tau}\Big|_\eqref{ham} \leq - \tau^{-1} l_\varkappa  \frac{V_1}{4 }, \quad l_\varkappa :=\frac{1-\varkappa }{1+\varkappa }
\end{gather}
 for all $(R,\Psi,\tau)\in \mathcal D^{w_1}_{d_2,\tau_2}$. Thus, for all $0<\varepsilon< d_0$ there exist $\delta_\varepsilon := \varepsilon \sqrt{l_\varkappa /2}$  such that
\begin{gather*}
\sup_{w_1\leq \delta_\varepsilon} V_1(R,\Psi,\tau)\leq (1+\varkappa )  \delta^2_\varepsilon< (1-\varkappa )   \varepsilon^2 \leq \inf_{w_1=\varepsilon}V_1(R,\Psi,\tau)
\end{gather*}
for all $\tau>\tau_0$, where $d_0=\min\{d_1,d_2\}$ and $\tau_0=\max\{\tau_1,\tau_2\}$. The last estimates and the negativity of the total derivative of the function $V_1(R,\Psi,\tau)$ ensure that any solution of system \eqref{ham} with initial data $ w_1(R(\tau_0), \Psi(\tau_0))\leq \delta_\varepsilon $ cannot leave the domain $\{(R,\Psi)\in\mathbb R^2: w_1(R,\Psi)\leq \varepsilon\}$ as  $\tau>\tau_0$. Hence, the fixed point $(0,0)$ is stable as $\tau>\tau_0$. The stability on the finite time interval $(0,\tau_0]$ follows from the theorem on the continuity of the solutions to the Cauchy problem with respect to the initial data.

Consider {\bf Case II}.
Let $\sigma$ be a root of multiplicity 2 to equation \eqref{TEQ} such that $\mathcal P'(\sigma;\delta,\nu)=0$ and $\mathcal P''(\sigma;\delta,\nu)<0$.
In this case, the particular solution $\rho_\ast(\tau)$, $\psi_\ast(\tau)$ has the asymptotics \eqref{PAS2} and  $\hat\psi=-\varphi \tau^{-1/4}+\mathcal O(\tau^{-1/2})$ as $\tau\to\infty$. Hence,
\begin{gather*}
H=\tau^{1/4} \Big(\lambda^{1/2} R^2+\mathcal P''(\sigma;\delta,\nu)\frac{\Psi^3}{6}+\mathcal O(d^4)\Big)- \phi \mathcal P''(\sigma;\delta,\nu)\frac{\Psi^2}{2}  + \mathcal O(d^3)+\mathcal O(\tau^{-1/4}d^2)
\end{gather*}
as $d\to 0$ and $\tau\to\infty$. Since the function $H(R,\Psi,\tau)$ is sign indefinite, the combination \eqref{LF} can not be used as a Lyapunov function candidate.

Consider the change of variables
\begin{gather*}
 R(\tau)=\tau^{-3/8} r(s), \quad \Psi(\tau)= \tau^{-1/4} p(s), \quad s=\frac{8}{9}\tau^{9/8}
\end{gather*}
in system \eqref{ham}. The transformed system is
\begin{gather}
    \label{ham1}
    \frac{d r}{ds}=-\partial_{p} \mathcal H_{2}(r,p,s), \quad
    \frac{d p}{ds}=\partial_{r} \mathcal H_{2}(r,p,s)+\mathcal F_{2}(r,p,s),
\end{gather}
where
\begin{gather*}
    \mathcal H_{2}(r,p,s):=\tau^{1/2} H ( \tau^{-3/8}r,  \tau^{-1/4}p, \tau ) \Big|_{\tau=(9s/8)^{8/9}}-s^{-1}\frac{rp}{3}, \\
    \mathcal F_{2}(r,p,s):=s^{-1}\frac{5 p }{9} +\tau^{1/8}  F ( \tau^{-3/8}r,  \tau^{-1/4}p, \tau ) \Big|_{\tau=(9s/8)^{8/9}}.
 \end{gather*}
Taking into account \eqref{PAS2}, it can easily be checked that
\begin{gather*}
\mathcal H_2(r,p,s) = h_{2}^0(r,p)+ \sum_{k=2}^\infty  s^{-k/9} h_{2}^k(r,p), \quad
\mathcal F_2(r,p,s)= s^{-1}\sum_{k=0}^\infty  s^{-k/9} f_2^k(r,p)
\end{gather*}
as $s\to \infty$ and for all $(r,p)\in\mathcal B_{d_\ast}$, where
\begin{gather*}
h_{2}^0(r,p)=\lambda^{1/2} r^2 +  \omega_2^2 \frac{p^2}{2}+\mathcal P''(\sigma;\delta,\nu)\frac{p^3}{6}, \quad
f_{2}^0(r,p)=\frac{p}{3} + \omega_2^2 \frac{4 p^2}{9 \lambda^{1/2}} + \mathcal P''(\sigma;\delta,\nu)\frac{ 4 p^3}{27},
\end{gather*}
$h_2^k(r,p)=\mathcal O(\Delta^2)$, $f_2^k(r,p)=\mathcal O(\Delta)$ as $ \Delta=\sqrt{r^2 +  p^2}\to 0$, and $\omega_2^2=-\phi \mathcal P''(\sigma;\delta,\nu)>0$. We see that the function $\mathcal H_2(r,p,s)$ is suitable for the basis of a Lyapunov function candidate. Consider the following combination:
\begin{gather}
\label{UF1}
U_2(r,p,s) = \mathcal H_2(r,p,s)+s^{-1}\frac{rp}{6}.
\end{gather}
It is easy to prove that for all $0<\varkappa <1$ there exist $ d_1>0$ and $ s_1>0$ such that
\begin{gather*}
   (1-\varkappa )  w_2^2(r,p) \leq U_2(r,p,s) \leq   (1+\varkappa )  w_2^2(r,p)
\end{gather*}
for all $(r,p,s)\in \mathcal D^{w_2}_{d_1,s_1}$, where
\begin{eqnarray*}
%\mathcal D^{w_2}_{d_1,s_1} & := & \{(r,p,s)\in \mathbb R^3: w_2(r,p)\leq d_1, \quad s\geq s_1\},\\
w_2(r,p) & := & \sqrt{\lambda^{1/2}r^2+\omega_2^2\frac{p^2}{2}}.
\end{eqnarray*}
It turns out that the total derivative of the function $U_2(r,p,s)$ has a sign definite leading term of the asymptotics:
\begin{align}
\begin{split}
\label{dU2}
\frac{d U_2}{ds}\Big|_{\eqref{ham1}} & =  \frac{\partial  U_2}{\partial s} + \frac{\partial \mathcal H_{2}}{\partial p} \mathcal F_{2}+s^{-1}\frac{1}{6}
\Big(r \frac{\partial \mathcal H_{2}}{\partial r} - p\frac{\partial \mathcal H_{2}}{\partial p}\Big)\\
& = s^{-1}  \frac{1}{3}\Big(\lambda^{1/2}r^2+\omega_2^2\frac{p^2}{2}+ \mathcal O(w_2^3)\Big)+\mathcal O(s^{-11/9} w_2^2)
\end{split}
\end{align}
as $\tau\to\infty$ and $w_2\to 0$. Hence, for all $0<\varkappa <1$ there exist $d_2>0$ and $s_2>0$ such that
\begin{gather}
    \label{LFD1}
    \frac{d U_2}{ds}\Big|_\eqref{ham1} \geq s^{-1 }   l_\varkappa  \frac{U_2 }{3 } \geq 0
\end{gather}
for all $(r_2,p_2,s)\in\mathcal D^{w_2}_{d_2,s_2}$.
The last inequality implies the instability of the equilibrium $(0,0)$ to system \eqref{ham1}.
Indeed, let $r(s), p(s)$ be a solution to system \eqref{ham1} with initial data $r(s_0)$ and $p(s_0)$ such that $w_2\big(r(s_0),p(s_0)\big)=\delta$, where $\delta\in (0,d_0)$, $d_0=\min\{d_1,d_2\}$, and $s_0=\max\{s_1,s_2\}$. By integrating \eqref{LFD1} with respect to $s$, we obtain the following inequality:
\begin{gather*}
w_2^2\big(r(s),p(s)\big)\geq \delta^2   l_\varkappa   \Big(\frac{s}{s_0}\Big)^{l_\varkappa/3},
\end{gather*}
as $s\geq s_0$. Hence, there exists $S_0>s_0$ such that the solution $r(s)$, $p(s)$ escapes from the domain $\{(r,p)\in \mathbb R^2: w_2(r,p)\leq d_0\}$ as $s\geq S_0$. This means that the equilibrium $(0,0)$ is unstable. Returning to the original variables $(\rho,\psi,\tau)$, we obtain the result of the theorem.

Finally, consider {\bf Case III}.  Let $\sigma$ be a root of multiplicity 3 to equation \eqref{TEQ} such that $\mathcal P'(\sigma;\delta,\nu)=0$ and $\mathcal P''(\sigma;\delta,\nu)=0$.
Then the solution $\rho_\ast(\tau)$, $\psi_\ast(\tau)$ has the asymptotics \eqref{PAS3}, and $\hat\psi=-\chi \tau^{-1/6}+\mathcal O(\tau^{-1/3})$.
In this case,
\begin{eqnarray*}
H(R,\Psi,\tau)&=&\tau^{1/4} \Big(\lambda^{1/2} R^2+\mathcal P'''(\sigma;\delta,\nu)\frac{\Psi^4}{24} +\mathcal O(d^5)\Big)+ \mathcal O(\tau^{1/12}  d^3) +\mathcal O(\tau^{-1/12} d^2)
\end{eqnarray*}
as $d\to 0$ and $\tau\to\infty$. Note that this function can not be used in the construction of a Laypunov function. Indeed, if $\Psi$ is small enough and $\tau$ is big enough (for example, $\Psi\sim \epsilon^{1/6}$ and  $\tau\sim \epsilon^{-1}$, where $0<\epsilon\ll 1$), the leading and the remainder terms in the last expression can be of the same order. Hence the function $H(R,\Psi,\tau)$ is sign indefinite in a neighborhood of the equilibrium.

Consider the change of variables
\begin{gather*}
 R(\tau)=\tau^{-1/3} r(s), \quad \Psi(\tau)= \tau^{-1/6} p(s), \quad s=\frac{12}{13}\tau^{13/12}
\end{gather*}
in system \eqref{ham}. It can easily be checked that the transformed system has the form
\begin{gather}
    \label{ham2}
    \frac{d r}{ds}=-\partial_{p} \mathcal H_3(r,p,s), \quad
    \frac{d p}{ds}=\partial_{r} \mathcal H_3(r,p,s)+\mathcal F_3(r,p,s),
\end{gather}
where
\begin{gather*}
\mathcal H_3(r,p,s):=\tau^{5/12} H ( \tau^{-1/3}r,  \tau^{-1/6}p, \tau )\Big|_{\tau=(13s/12)^{12/13}}-s^{-1}\frac{4rp}{13}, \\
\mathcal F_3(r,p,s):=s^{-1}\frac{6 p }{13} +\tau^{1/12}  F ( \tau^{-1/3}r,  \tau^{-1/6}p, \tau )\Big|_{\tau=(13s/12)^{12/13}}.
 \end{gather*}
Using asymptotic formulas for the particular solution, we obtain
 \begin{eqnarray*}
\mathcal H_3 &= &   \lambda^{1/2} r^2 +  \omega_2^2 \frac{p^2}{2}-\chi \mathcal P'''(\sigma;\delta,\nu)\frac{p^3}{6}  +\mathcal P'''(\sigma;\delta,\nu)\frac{p^4}{24}+\mathcal O(s^{-2/13}),  \\
\mathcal F_3 & = & s^{-1} \Big(\frac{3p}{13} + \mathcal P'''(\sigma;\delta,\nu) \frac{p^2}{26\lambda^{1/2}} \big(6 \chi^2  - 4 \chi p +\lambda^{1/2}p^2 \big) \Big) +\mathcal O(s^{-14/13})
\end{eqnarray*}
as $s\to\infty$ and for all $(r,p)\in\mathcal B_{d_\ast}$, where $\omega_3^2:=\chi^2 \mathcal P'''(\sigma;\delta,\nu)/2>0$.
Consider the combination
\begin{gather*}
U_3(r,p,s) = \mathcal H_3(r,p,s)+s^{-1}\frac{3 rp}{26}
\end{gather*}
as a Lyapunov function candidate for system \eqref{ham2}. It follows easily that for all $0<\varkappa <1$ there exist $ d_1>0$ and $ s_1>0$ such that
\begin{gather*}
   (1-\varkappa )  w_3^2(r,p) \leq U_3(r,p,s) \leq   (1+\varkappa )  w_3^2(r,p)
\end{gather*}
for all $(r_3,p_3,s)\in \mathcal D^{w_3}_{d_1,s_1}$, where
\begin{eqnarray*}
w_3(r,p) & := & \sqrt{\lambda^{1/2}r^2+\omega_3^2\frac{p^2}{2}}.
\end{eqnarray*}
The derivative of this function with respect to $s$ along the trajectories of system \eqref{ham2} has the following asymptotics:
\begin{eqnarray*}
\frac{d U_3}{ds}\Big|_{\eqref{ham2}} & = & \frac{\partial  U_3}{\partial s} + \frac{\partial \mathcal H_3}{\partial p} \mathcal F_3+s^{-1}\frac{3}{26}
\Big(r\frac{\partial \mathcal H_3}{\partial r} - p\frac{\partial \mathcal H_3}{\partial p}\Big)\\
& =& s^{-1}  \frac{3}{13}\Big(\lambda^{1/2}r^2+\omega_3^2\frac{p^2}{2}+ \mathcal O(w_3^3)\Big)+\mathcal O(s^{-14/13} w_3^2)
\end{eqnarray*}
as $s\to\infty$ and $w_3\to 0$. Hence, for all $0<\varkappa <1$ there exist $d_2>0$ and $s_2>0$ such that
\begin{gather}
\label{LFD2}\frac{d U_3}{ds}\Big|_\eqref{ham2} \geq s^{-1 }   l_\varkappa  \frac{3 }{13 } U_3\geq 0
\end{gather}
for all $(r,p,s)\in\mathcal D^{w_2}_{d_2,s_2}$. The last inequality implies that the fixed point $(0,0)$ of system \eqref{ham2} is unstable.
Indeed, let $r(s)$, $p(s)$ be a solution to system \eqref{ham2} with initial data $r(s_0)$, $p(s_0)$ such that $w_3\big(r(s_0),p(s_0)\big)=\delta$, where $\delta\in (0,d_0)$, $d_0=\min\{d_1,d_2\}$, and $s_0=\max\{s_1,s_2\}$. Integrating \eqref{LFD2} with respect to $s$ yields
\begin{gather*}
w_3^2\big(r(s),p(s)\big)\geq  \delta^2  l_\varkappa  \Big(\frac{s}{s_0}\Big)^{3 l_\varkappa/13},
\end{gather*}
as $s\geq s_0$. Hence there exists $S_0>s_0$ such that the solution $r(s)$, $p(s)$ escapes from the domain $\{(r,p)\in \mathbb R^2: w_3(r,p)\leq d_0\}$ as $s\geq S_0$. Thus, the fixed point $(0,0)$ of system \eqref{ham} and the particular solution $\rho_\ast(\tau)$, $\psi_\ast(\tau)$ to system \eqref{MS} are unstable.
\end{proof}

Thus, the particular autoresonant solutions with power-law asymptotics are unstable (with respect to all the variables) in {\bf Case II} and {\bf Case III}. However, due to weak instability, it can be shown that there is a partial stability~\cite{VorRum} with respect to one of the variables $(\rho,\psi)$ on an asymptotically long time interval. We have the following.

\begin{Th}
\label{c2s}
Let $\sigma$ be a root of equation \eqref{TEQ}. In {\bf Case II}, the solution $\rho_\ast(\tau)$, $\psi_\ast(\tau)$ with asymptotics \eqref{PAS2}, $\psi_1=-\phi$ is $\rho$-stable on a finite but asymptotically long time interval.
\end{Th}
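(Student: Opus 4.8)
The plan is to reuse the rescaled system \eqref{ham1} and the Lyapunov function $U_2$ built in \textbf{Case II} of the proof of Theorem~\ref{cgs}, but to extract a \emph{two-sided} bound from it rather than only the lower bound used there for instability. Under the hypotheses of the present theorem the substitution $\rho(\tau)=\rho_\ast(\tau)+\tau^{-5/8}r(s)$, $\psi(\tau)=\psi_\ast(\tau)+\tau^{-1/4}p(s)$, $s=\tfrac89\tau^{9/8}$, turns \eqref{MS} into \eqref{ham1}; the function $U_2$ of \eqref{UF1} satisfies $(1-\varkappa)w_2^2\le U_2\le(1+\varkappa)w_2^2$ on a domain $\mathcal D^{w_2}_{d_0,s_0}$; and the expansion \eqref{dU2} yields, exactly as it produced \eqref{LFD1}, the two-sided estimate \[ s^{-1}\,l_\varkappa\,\tfrac13 U_2\ \le\ \tfrac{dU_2}{ds}\big|_{\eqref{ham1}}\ \le\ s^{-1}\,L_\varkappa\,\tfrac13 U_2 \] on the same domain, where $l_\varkappa=(1-\varkappa)/(1+\varkappa)$ and $L_\varkappa$ is a constant tending to $1$ as $\varkappa\to0$ (after possibly shrinking $d_0$ and enlarging $s_0$); I fix the parameters so that $L_\varkappa<10/3$. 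Here the upper bound is the only new ingredient; the lower one merely re-confirms that the solution eventually leaves every fixed small ball, so that the time interval obtained below is genuinely finite.

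First I would integrate the right-hand inequality: a solution of \eqref{ham1} with $\delta:=w_2(r(s_0),p(s_0))\in(0,d_0)$ obeys $w_2^2(r(s),p(s))\le l_\varkappa^{-1}\delta^2(s/s_0)^{L_\varkappa/3}$ as long as it stays in the domain, and by a standard continuation argument this persists at least until $s=S_\delta:=s_0\,(l_\varkappa d_0^2/\delta^2)^{3/L_\varkappa}$, with $S_\delta\to\infty$ as $\delta\to0$. Next I would pass back to the amplitude: $|\rho(\tau)-\rho_\ast(\tau)|=\tau^{-5/8}|r(s)|\le\lambda^{-1/4}\tau^{-5/8}w_2(r(s),p(s))$, and inserting $\tau=(9s/8)^{8/9}$, i.e.\ $\tau^{-5/8}\propto s^{-5/9}$, gives $|\rho(\tau)-\rho_\ast(\tau)|\le C\,\delta\,s_0^{-L_\varkappa/6}\,s^{L_\varkappa/6-5/9}$ for $s\in[s_0,S_\delta]$. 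Since $L_\varkappa<10/3$ the exponent $L_\varkappa/6-5/9$ is negative, so this bound is maximal at $s=s_0$ and does not exceed $C\delta s_0^{-5/9}$, which tends to $0$ as $\delta\to0$, while the corresponding original-time horizon $\mathcal T_\delta:=(9S_\delta/8)^{8/9}\to\infty$. On the fixed remaining interval, up to $\tau_0:=(9s_0/8)^{8/9}$, closeness of $\rho(\tau)$ to $\rho_\ast(\tau)$ — and smallness of $\delta$ in terms of the initial perturbation — follows from continuous dependence of the solutions of \eqref{MS} on the initial data, exactly as at the end of the treatment of \textbf{Case I}. Combining the two pieces gives: for every $\varepsilon>0$ there are $\Delta_\varepsilon>0$ and $\mathcal T_\varepsilon>0$ with $\mathcal T_\varepsilon\to\infty$ as $\varepsilon\to0$ such that every solution of \eqref{MS} whose initial deviation from $(\rho_\ast,\psi_\ast)$ is below $\Delta_\varepsilon$ stays within $\varepsilon$ of $\rho_\ast$ in amplitude for $\tau\le\mathcal T_\varepsilon$ — the $\rho$-stability on a finite but asymptotically long interval, the $\psi$-component being left uncontrolled, in agreement with the full instability of Theorem~\ref{cgs}.

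I expect the main obstacle to be the quantitative two-sided control of $U_2$ along trajectories: one must establish the upper derivative estimate and keep the growth constant $L_\varkappa$ below $10/3$ — or, more robustly, simply track how the escape time $S_\delta$ and the maximal $\rho$-deviation jointly depend on $\delta$, so that the former diverges while the latter vanishes (a short computation shows this holds for every finite $L_\varkappa$, the deviation at $s=S_\delta$ being $\mathcal O(\delta^{10/(3L_\varkappa)})$) — and one must check that the asymptotic expansions of $\mathcal H_2$, $\mathcal F_2$, and hence of $U_2$ and $dU_2/ds$, remain valid uniformly on the growing interval $[s_0,S_\delta]$; the latter is automatic because those expansions are uniform in $s\ge s_0$ on $\mathcal B_{d_\ast}$, so the real work is the bookkeeping of the two competing powers of $s$.
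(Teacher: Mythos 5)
Your argument is correct, but it is not the route the paper takes, so a comparison is in order. The paper proves Theorem~\ref{c2s} by changing variables once more, $\varrho=R$, $\varphi=\tau^{1/4}\Psi$ (see \eqref{chv2}), keeping the original time $\tau$, and building a \emph{second}, decreasing Lyapunov function $V_2=\tau^{-1/2}\hat H_2+\tau^{-3/2}v_2^1$ for system \eqref{hatham1}, with $dV_2/d\tau\leq-\tau^{-1}l_\varkappa\tfrac38 V_2$; the partial stability and the finite horizon then come from comparing level sets of $V_2$ against the time-degenerate weight $W_2^2=\lambda^{1/2}\varrho^2+\tau^{-3/4}\omega_2^2\varphi^2/2$, whose $\varphi$-direction degrades like $\tau^{-3/4}$ and caps the admissible interval at $\tau/\tau_0\lesssim\varepsilon^{-8/3}$. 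You instead stay in the $(r,p,s)$ variables of the instability argument and extract from \eqref{dU2} the upper Gr\"onwall bound companion to \eqref{LFD1} for the \emph{increasing} function $U_2$ of \eqref{UF1}; this is legitimate, since \eqref{dU2} gives $dU_2/ds\le s^{-1}\tfrac{1+\varkappa}{3(1-\varkappa)}U_2$ on $\mathcal D^{w_2}_{d_2,s_2}$ after shrinking $d_2$ and enlarging $s_2$, so $L_\varkappa=l_\varkappa^{-1}\to1$ and the threshold $10/3$ is never in danger. Your mechanism — the at-most-polynomial growth $w_2\lesssim\delta(s/s_0)^{L_\varkappa/6}$ up to the escape time $S_\delta\sim\delta^{-6/L_\varkappa}$, traded against the decaying factor $\tau^{-5/8}\propto s^{-5/9}$ linking $r$ to $\rho-\rho_\ast$, plus the standard continuation argument and the uniformity of the escape time over initial deviations $\delta\le\Delta_\varepsilon$ — is sound and delivers exactly the claimed $\rho$-stability on an asymptotically long interval (in fact with a horizon, expressed through the initial deviation, that is even longer than $\varepsilon^{-8/3}$). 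What the two approaches buy: yours is more economical (no second Lyapunov function) and makes the competition of exponents transparent; the paper's construction gives the sharper weighted control $|\tau^{1/4}(\rho-\rho_\ast)|<\varepsilon$ and $\tau^{-1/8}|\psi-\psi_\ast|<\varepsilon$, states the horizon directly in terms of $\varepsilon$, and — importantly — the decreasing function $V_2$ and the estimate \eqref{v2est} are reused afterwards in the Corollary to obtain the refined rates \eqref{as2}, which your unweighted bound $|\rho-\rho_\ast|=\mathcal O(\delta)$ would not yield by itself.
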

\begin{proof}
Consider the change of variables
\begin{gather}
\label{chv2}
R(\tau)=\varrho(\tau), \quad \Psi(\tau)=\tau^{-1/4}\varphi(\tau)
\end{gather}
in system \eqref{ham}. It is clear that the system for the new variables $(\varrho ,\varphi )$ has the following form:
\begin{gather}
\label{hatham1}
    \frac{d\varrho}{d\tau}=-\partial_{\varphi } \hat H_2(\varrho ,\varphi ,\tau), \quad
    \frac{d\varphi }{d\tau}=\partial_{\varrho } \hat H_2(\varrho ,\varphi ,\tau)+\hat F_2(\varrho ,\varphi ,\tau),
\end{gather}
where
\begin{gather*}
\hat H_2(\varrho ,\varphi ,\tau) :=  \tau^{1/4} H(\varrho ,\tau^{-1/4}\varphi ,\tau), \quad
\hat    F_2(\varrho ,\varphi ,\tau):=  \tau^{1/4} F (\varrho ,\tau^{-1/4}\varphi ,\tau) + \tau^{-1}\frac{\varphi }{4}.
\end{gather*}
Furthermore, using asymptotic formulas for the solution $\rho_\ast(\tau)$, $\psi_\ast(\tau)$, we get
\begin{eqnarray*}
    \hat H_2(\varrho ,\varphi ,\tau)&= &  \tau^{1/2} \lambda^{1/2} \varrho ^2 + \tau^{-1/4} \Big(\omega_2^2 \frac{\varphi ^2}{2}+\mathcal P''(\sigma;\delta,\nu)\frac{\varphi ^3}{6}+\frac{\varrho ^3}{3}\Big) +\mathcal O(\tau^{-1/2}),  \\
\hat F_2(\varrho ,\varphi ,\tau)&= & \tau^{-1} \Big(-\cos \sigma \frac{\varrho }{\lambda^{1/2}} + \omega_2^2 \frac{\varphi  ^2}{2\lambda^{1/2}} + \mathcal P''(\sigma;\delta,\nu)\frac{\varphi ^3}{6} \Big) +\mathcal O(\tau^{-5/4})
\end{eqnarray*}
as $\tau\to\infty$ and for all $(\varrho ,\varphi )\in\mathcal B_{d_\ast}$. Note that there exists $\tau_\ast>0$ such that for every $\tau>\tau_\ast$, the Hamiltonian $\hat H_2(\varrho ,\varphi ,\tau)$ is a positive definite quadratic form in the vicinity of the fixed point $(0,0)$.
Consider the following perturbation of $\hat H_2(\varrho ,\varphi ,\tau)$ as a Lyapunov function candidate for system \eqref{hatham1}:
\begin{gather*}
V_2(\varrho ,\varphi ,\tau):= v_{2}^0(\varrho,\varphi,\tau)+\tau^{-3/2} v_{2}^1(\varrho,\varphi),
\end{gather*}
where
\begin{gather*}
    v_{2}^0(\varrho ,\varphi ,\tau):=\tau^{-1/2}\hat H_2(\varrho,\varphi,\tau), \quad
    v_{2}^1(\varrho,\varphi):= -\frac{3 \varrho \varphi  }{16}-\cos\sigma \frac{\varrho^2}{2\lambda^{1/2}}.
\end{gather*}
Note that for all $0<\varkappa <1$ there exist $d_1>0$ and $\tau_1\geq \tau_\ast$ such that
\begin{gather*}
 (1-\varkappa ) W_2^2(\varrho ,\varphi ,\tau)  \leq V_2(\varrho ,\varphi ,\tau)\leq (1+\varkappa ) W_2^2(\varrho ,\varphi ,\tau)
\end{gather*}
for all $(\varrho,\varphi)\in \mathcal B_{d_1}$ and $\tau\geq \tau_1$, where
\begin{gather*}
W_2(\varrho ,\varphi ,\tau)  :=  \sqrt{\lambda^{1/2}\varrho ^2+\tau^{-3/4}\omega_2^2\frac{\varphi ^2}{2}}.
%\\
%\mathcal D_{w_1}(d_1,\tau_1) & := & \{(R,\Psi,\tau)\in \mathbb R^3: w_1(R,\Psi,\tau)\leq d_1, \quad \tau\geq \tau_1\}.
\end{gather*}
The derivatives of functions $v_{2}^0(\varrho ,\varphi ,\tau)$ and $v_{2}^1(\varrho ,\varphi )$ with respect to $\tau$ along the trajectories of system \eqref{hatham1} have the following asymptotics:
\begin{eqnarray*}
    \frac{dv_{2}^0}{d\tau}\Big|_\eqref{hatham1}
        & = &
            \tau^{-1/2}   \Big(\frac{\partial \hat H_2}{\partial \tau}+ \frac{\partial \hat  H_2}{\partial \varphi }  \hat F_2\Big) -\frac{1}{2} \tau^{-3/2} \hat H_2  \\
        &=&
            \tau^{-7/4}  \Big( - \omega_2^2 \frac{3\varphi ^2}{8}-\omega_1^2 \cos\sigma\frac{\varrho \varphi  }{\lambda^{1/2}}+\mathcal O(\hat  d ^3)\Big)+ \mathcal O(\tau^{-2}\hat   d^2), \\
    \tau^{-3/2}\frac{dv_{2}^1}{d\tau}\Big|_\eqref{hatham1}
        & = &
            -\tau^{-1} \lambda^{1/2}\frac{3 \varrho ^2}{8}  + \tau^{-7/4}\Big( \omega_2^2 \frac{3\varphi ^2}{16}+\omega_2^2 \cos\sigma\frac{\varrho \varphi  }{\lambda^{1/2}}+\mathcal O(\hat  d^3)\Big) +\mathcal O(\tau^{-2} \hat  d^2)
\end{eqnarray*}
as $\hat  d=\sqrt{\varrho ^2+\varphi ^2}\to 0$ and $\tau\to\infty$. Combining the last expressions, we see that
\begin{gather*}
\frac{dV_2}{d\tau}\Big|_\eqref{hatham1} = - \tau^{-1}  \frac{3}{8}\Big( \lambda^{1/2}    \varrho ^2 +\tau^{-3/4}\omega_2^2 \frac{\varphi ^2}{2} +\mathcal O(\tau^{-3/4}\hat d^3)\Big)+\mathcal O(\tau^{-2} \hat d^2).
\end{gather*}
Hence, for all $0<\varkappa <1$ there exist $d_2>0$ and $\tau_2\geq \tau_\ast$ such that
\begin{gather}
\label{v2est}
\frac{dV_2}{d\tau}\Big|_\eqref{hatham1} \leq -\tau^{-1 } l_\varkappa   \frac{3}{8 } V_2
\end{gather}
for all $(\varrho,\varphi)\in\mathcal B_{d_2}$ and $\tau\geq \tau_2$.
Besides, for all $\varepsilon>0$ there exist $\delta_\varepsilon = \tau_0^{-3/8} \varepsilon \sqrt{l_\varkappa m_-/(2m_+)}$  such that
\begin{gather*}
\sup_{\{ \varrho ^2+\varphi ^2 \leq \delta_\varepsilon^2\}} V_2(\varrho ,\varphi ,\tau) \leq  (1+\varkappa ) m_+\delta_\varepsilon^2 <  (1-\varkappa ) m_- \tau_0^{-3/4}\varepsilon^2  \leq
\inf_{\{\varrho^2+ (\tau/\tau_0)^{-3/4} \varphi^2  =\varepsilon^2\}}V_2(\varrho ,\varphi ,\tau),
\end{gather*}
for all $1 \leq \tau/\tau_0\leq \varepsilon^{-8/3}d_0^{8/3}$, where $d_0=\min\{d_1,d_2\}$,
$\tau_0=\max\{\tau_1,\tau_2, 1\}$, $m_-=\min\{\lambda^{1/2},\omega^2_2/2\}$, and
 $m_+=\max\{\lambda^{1/2},\omega^2_2/2\}$.
This implies that any solution of system \eqref{hatham1} starting in $\mathcal B_{\delta_\varepsilon}$ at $\tau=\tau_0$ satisfies the inequalities:
\begin{gather*}
|\varrho (\tau)|< \varepsilon, \quad \tau^{-3/8}|\varphi (\tau)|< \varepsilon
\end{gather*}
as $\tau_0\leq \tau\leq \mathcal O(\varepsilon^{-8/3})$. Hence, the equilibrium $(0,0)$ of system \eqref{hatham1} is $\varrho$-stable at least on the asymptotically long time interval. Returning to the original variables $(\rho,\psi)$, we obtain the result of the theorem.
\end{proof}

Similarly, we have the following.
\begin{Th}
\label{c3s}
Let $\sigma$ be a root of equation \eqref{TEQ}. In {\bf Case III}, the solution $\rho_\ast(\tau)$, $\psi_\ast(\tau)$ with asymptotics \eqref{PAS3} is $\rho$-stable on a finite but asymptotically long time interval.
\end{Th}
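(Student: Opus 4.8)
The plan is to repeat, with the scaling exponents tuned to the asymptotics \eqref{PAS3}, the argument used in the proof of Theorem~\ref{c2s}. Recall that in {\bf Case III} one has $\mathcal P'(\sigma;\delta,\nu)=\mathcal P''(\sigma;\delta,\nu)=0$, $\mathcal P'''(\sigma;\delta,\nu)>0$ and $\hat\psi(\tau)=-\chi\tau^{-1/6}+\mathcal O(\tau^{-1/3})$, so the natural scale of the phase deviation is $\tau^{-1/6}$. Starting from system \eqref{ham} for the variables $(R,\Psi)$, I would apply the change of variables $R(\tau)=\varrho(\tau)$, $\Psi(\tau)=\tau^{-1/6}\varphi(\tau)$, which leaves the amplitude perturbation untouched while stretching the phase variable to its natural size. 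A direct computation then transforms \eqref{ham} into a system of the form
\begin{gather*}
\frac{d\varrho}{d\tau}=-\partial_\varphi\hat H_3(\varrho,\varphi,\tau),\qquad
\frac{d\varphi}{d\tau}=\partial_\varrho\hat H_3(\varrho,\varphi,\tau)+\hat F_3(\varrho,\varphi,\tau),
\end{gather*}
where $\hat H_3(\varrho,\varphi,\tau):=\tau^{1/6}H(\varrho,\tau^{-1/6}\varphi,\tau)$ and $\hat F_3(\varrho,\varphi,\tau):=\tau^{1/6}F(\varrho,\tau^{-1/6}\varphi,\tau)+\frac{1}{6}\tau^{-1}\varphi$, with the equilibrium still sitting at the origin.

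Next I would substitute the expansions of $H$ and $F$ obtained in the proof of Theorem~\ref{cgs}, using $\hat\psi=-\chi\tau^{-1/6}+\mathcal O(\tau^{-1/3})$ and the identity $\chi^3\mathcal P'''(\sigma;\delta,\nu)=3\sqrt\lambda$, to get, uniformly on $\mathcal B_{d_\ast}$,
\begin{gather*}
\hat H_3=\tau^{5/12}\lambda^{1/2}\varrho^2+\tau^{-1/4}\Big(\omega_3^2\frac{\varphi^2}{2}-\chi\mathcal P'''(\sigma;\delta,\nu)\frac{\varphi^3}{6}+\mathcal P'''(\sigma;\delta,\nu)\frac{\varphi^4}{24}\Big)+\mathcal O(\tau^{-1/3}),\\
\hat F_3=\mathcal O(\tau^{-1}),
\end{gather*}
as $\tau\to\infty$, with $\omega_3^2=\chi^2\mathcal P'''(\sigma;\delta,\nu)/2>0$ (the $\tau^{-1}$ part of $\hat F_3$ being a polynomial in $\varphi$). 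Thus, near the origin and for $\tau$ large, $\hat H_3(\cdot,\cdot,\tau)$ is a positive-definite quadratic form, while $\hat F_3$ is a genuinely small non-Hamiltonian perturbation.

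Mimicking {\bf Case II}, I would then look for a Lyapunov function candidate of the form
\begin{gather*}
V_3(\varrho,\varphi,\tau):=\tau^{-5/12}\hat H_3(\varrho,\varphi,\tau)+\tau^{-17/12}v_3^1(\varrho,\varphi),
\end{gather*}
where $v_3^1$ is a suitable quadratic form (of the type $c_1\varrho\varphi$, possibly with an additional $\varrho^2$-term) whose constants are fixed so that the contribution of $\tau^{-17/12}v_3^1$ to the total derivative supplies a strictly negative term $-c\,\tau^{-1}\lambda^{1/2}\varrho^2$ and absorbs the non-sign-definite $\varrho\varphi$ cross terms produced by differentiating $\tau^{-5/12}\hat H_3$. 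This should yield two-sided bounds
\begin{gather*}
(1-\varkappa)\,W_3^2(\varrho,\varphi,\tau)\le V_3(\varrho,\varphi,\tau)\le(1+\varkappa)\,W_3^2(\varrho,\varphi,\tau),\\
W_3(\varrho,\varphi,\tau):=\sqrt{\lambda^{1/2}\varrho^2+\tau^{-2/3}\omega_3^2\frac{\varphi^2}{2}},
\end{gather*}
for $\sqrt{\varrho^2+\varphi^2}$ small and $\tau$ large, together with an estimate $dV_3/d\tau\le-\tau^{-1}l_\varkappa c\,V_3\le0$ on a set $\{\sqrt{\varrho^2+\varphi^2}\le d_0,\ \tau\ge\tau_0\}$. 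Feeding these two inequalities into the comparison argument of Theorem~\ref{c2s}, for every small $\varepsilon>0$ there is a $\delta_\varepsilon=\mathcal O(\varepsilon)$ such that a solution of the transformed system starting in $\mathcal B_{\delta_\varepsilon}$ at $\tau=\tau_0$ remains in the ellipsoidal region $\{\lambda^{1/2}\varrho^2+(\tau/\tau_0)^{-2/3}\omega_3^2\varphi^2/2\le m\varepsilon^2\}$; this forces $|\varrho(\tau)|<\varepsilon$ and $\tau^{-1/3}|\varphi(\tau)|<\varepsilon$ as long as the slowly decaying factor $(\tau/\tau_0)^{-2/3}$ keeps that region inside $\mathcal B_{d_0}$, i.e. for $\tau_0\le\tau\le\mathcal O(\varepsilon^{-3})$. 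Since $\varrho=R=\tau^{1/4}(\rho-\rho_\ast)$, this gives $|\rho(\tau)-\rho_\ast(\tau)|<\tau^{-1/4}\varepsilon$ on that interval, and the behaviour on $[0,\tau_0]$ follows from the continuity of solutions of the Cauchy problem with respect to the initial data; hence $\rho_\ast(\tau),\psi_\ast(\tau)$ is $\rho$-stable on a finite but asymptotically long time interval.

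The delicate step, exactly as in {\bf Case II}, is the Lyapunov construction itself: the total derivative of $\tau^{-5/12}\hat H_3$ along the trajectories has no sign-definite term at the leading order $\tau^{-1}$, because the $\varrho^2$-contributions at that order cancel identically (the normalised Hamiltonian being an adiabatic invariant), so the negative $\tau^{-1}\varrho^2$-term has to be produced by the correction $\tau^{-17/12}v_3^1$, which at the same time must absorb the indefinite $\varrho\varphi$ cross terms; since the $\varphi$-dependence of $\hat H_3$ also carries cubic and quartic terms of the same order $\tau^{-1/4}$ as the quadratic one, $v_3^1$ and the exponent $17/12$ have to be chosen with care (and, if a single correction does not suffice, a second one is added as in {\bf Case I}). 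Once this is arranged, the remaining estimates are routine bookkeeping with the expansions above.
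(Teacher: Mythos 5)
Your proposal is correct and follows essentially the same route as the paper: the scaling $R=\varrho$, $\Psi=\tau^{-1/6}\varphi$, the Lyapunov candidate $V_3=\tau^{-5/12}\hat H_3+\tau^{-17/12}v_3^1$ with the two-sided bounds in terms of $W_3=\sqrt{\lambda^{1/2}\varrho^2+\tau^{-2/3}\omega_3^2\varphi^2/2}$, the differential inequality $dV_3/d\tau\leq-c\,l_\varkappa\tau^{-1}V_3$, and the resulting $\varrho$-stability for $\tau_0\leq\tau\leq\mathcal O(\varepsilon^{-3})$ are exactly the paper's argument, and your diagnosis of the delicate point (the $\tau^{-1}\varrho^2$ terms cancel in $d(\tau^{-5/12}\hat H_3)/d\tau$, so the cross-term correction must supply them) is accurate. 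The only detail you leave open is pinned down in the paper by the explicit choice $v_3^1=-\tfrac{5}{24}\varrho\varphi$ (no $\varrho^2$ term needed here, unlike Case II), which yields the decay constant $5/12$.
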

\begin{proof}
The change of variables
\begin{gather}
\label{chv3}
R(\tau)= \varrho(\tau),\quad \Psi(\tau)=\tau^{-1/6}\varphi(\tau)
\end{gather}
transforms system \eqref{ham} into
\begin{gather}
    \label{hatham2}
    \frac{d\varrho}{d\tau}=-\partial_{\varphi } \hat H_3(\varrho ,\varphi ,\tau), \quad
    \frac{d\varphi }{d\tau}=\partial_{\varrho } \hat H_3(\varrho ,\varphi ,\tau)+\hat F_3(\varrho ,\varphi ,\tau),
\end{gather}
where
\begin{gather*}
\hat H_3(\varrho,\varphi,\tau)=   \tau^{1/6} H(\varrho,\tau^{-1/6}\varphi,\tau), \quad
\hat F_3(\varrho,\varphi,\tau)= \tau^{1/6} F (\varrho,\tau^{-1/6}\varphi,\tau) + \tau^{-1}\frac{\varphi}{6}.
\end{gather*}
By taking into account the asymptotics for the particular solution, we obtain
\begin{eqnarray*}
\hat H_3(\varrho ,\varphi ,\tau)&= &  \tau^{5/12} \lambda^{1/2} \varrho ^2 + \tau^{-1/4} \Big(\omega_3^2 \frac{\varphi ^2}{2}-\mathcal P'''(\sigma;\delta,\nu)\chi\frac{\varphi ^3}{6}+\mathcal P'''(\sigma;\delta,\nu) \frac{\varphi ^4}{24} \Big) + \mathcal O(\tau^{-1/3}),  \\
\hat F_3(\varrho ,\varphi ,\tau)&= & \tau^{-1} \Big(- \frac{\varphi }{12}+\mathcal P'''(\sigma;\delta,\nu)  \Big(\chi^2 \frac{\varphi ^2} {4 \lambda^{1/2}}-\chi \frac{\varphi ^3} {6 \lambda^{1/2}}+\frac{\varphi ^4}{24}\Big) \Big) +\mathcal O(  \tau^{-7/6})
\end{eqnarray*}
as $\tau\to\infty$. As above, we use the Hamiltonian $\hat H_3(\varrho,\varphi,\tau)$ as the basis for the Lyapunov function.
Consider the combination:
\begin{gather*}
V_3(\varrho ,\varphi ,\tau):= v_{3}^0(\varrho ,\varphi ,\tau)+\tau^{-17/12} v_{3}^1(\varrho ,\varphi ),
\end{gather*}
where
\begin{gather*}
v_{3}^0(\varrho ,\varphi ,\tau):=\tau^{-5/12}\hat H_3(\varrho ,\varphi ,\tau), \quad v_{3}^1(\varrho ,\varphi ):= -\frac{5}{24} \varrho \varphi.
\end{gather*}
It follows easily that for all $0<\varkappa <1$ there exist $d_1>0$ and $\tau_1>0$ such that
\begin{gather*}
 (1-\varkappa ) W_3^2(\varrho ,\varphi ,\tau) \leq V_3(\varrho ,\varphi ,\tau)\leq (1+\varkappa ) W_3^2(\varrho ,\varphi ,\tau)
\end{gather*}
for all $(\varrho,\varphi)\in\mathcal B_{d_1}$ and $\tau\geq \tau_1$, where
\begin{gather*}
W_3(\varrho ,\varphi ,\tau) := \sqrt{\lambda^{1/2}\varrho ^2+\tau^{-2/3}\omega_3^2\frac{\varphi ^2}{2}}.
\end{gather*}
Calculating the derivatives of the functions $v_{3}^0(\varrho ,\varphi ,\tau)$ and $v_{3}^1(\varrho ,\varphi )$ with respect to $\tau$ along the trajectories of   system \eqref{hatham2}, we obtain
\begin{eqnarray*}
    \frac{dv_{3}^0}{d\tau}\Big|_\eqref{hatham2} & = & \tau^{-5/12}   \Big(\frac{\partial \hat H_3}{\partial \tau}+ \frac{\partial \hat  H_3}{\partial \varphi }   \hat  F_3\Big) -\frac{5 }{12} \tau^{-17/12}\hat   H_3 \\
    &=& \tau^{-5/3}  \Big( - \omega_3^2 \frac{5\varphi ^2}{12}+\mathcal O(\hat  d^3)\Big)+ \mathcal O(\tau^{-11/6} \hat d^2), \\
    \tau^{-17/12}\frac{dv_{3}^1}{d\tau}\Big|_\eqref{hatham2} & = & -\tau^{-1} \lambda^{1/2}\frac{5 \varrho ^2}{12}  + \tau^{-5/3}\Big( \omega_3^2 \frac{5\varphi ^2}{24}+\mathcal O(\hat  d^3)\Big) +\mathcal O(\tau^{-11/6} \hat  d^2)
\end{eqnarray*}
as $\hat d\to 0$ and $\tau\to\infty$.
Combining the preceding estimates, we get
\begin{gather*}
\frac{dV_3}{d\tau}\Big|_\eqref{hatham2} = - \tau^{-1}  \frac{5}{12}\Big( \lambda^{1/2}    \varrho ^2 +\tau^{-2/3}\omega_3^2 \frac{\varphi ^2}{2} +\mathcal O(\hat d^3)\Big)+\mathcal O(\tau^{-11/6} \hat  d^2).
\end{gather*}
Hence, for all $0<\varkappa <1$ there exist $d_2>0$ and $\tau_2>0$ such that
\begin{gather}
\label{v3est}
\frac{dV_3}{d\tau}\Big|_\eqref{ham2} \leq -\tau^{-1 } l_\varkappa  \frac{5 }{12} V_3,
\end{gather}
for all $(\varrho,\varphi)\in\mathcal B_{d_2}$ and $\tau\geq \tau_2$. Therefore, as in the previous case, the fixed point $(0,0)$ of system \eqref{hatham2} is $\varrho $-stable on an asymptotically long time interval: for all $\varepsilon>0$ there exists $\delta_\varepsilon>0$ such that any solution of system \eqref{hatham2} starting from $\mathcal B_{\delta_\varepsilon}$ at $\tau=\tau_0$ satisfies the inequalities
\begin{gather*}
|\varrho (\tau)|<\varepsilon, \quad \tau^{-1/3}|\varphi (\tau)|<\varepsilon
\end{gather*}
as $1\leq \tau/\tau_0\leq d_0^{3}\varepsilon^{-3}$, where $d_0=\min\{d_1,d_2\}$ and $\tau_0=\max\{\tau_1,\tau_2,1\}$. Returning to the original variables, we see that the particular solution $\rho_\ast(\tau)$, $\psi_\ast(\tau)$ is $\rho$-stable on the asymptotically long time interval.

\end{proof}

The stability of the particular solutions $\rho_\ast(\tau)$, $\psi_\ast(\tau)$ ensures the existence of a family of autoresonant solutions with a similar behaviour (see Fig.~\ref{rhopsi}). Some rough estimates for these solutions follow directly from the properties of constructed Lyapunov functions.

\begin{figure}
\centering
\includegraphics[width=0.4\linewidth]{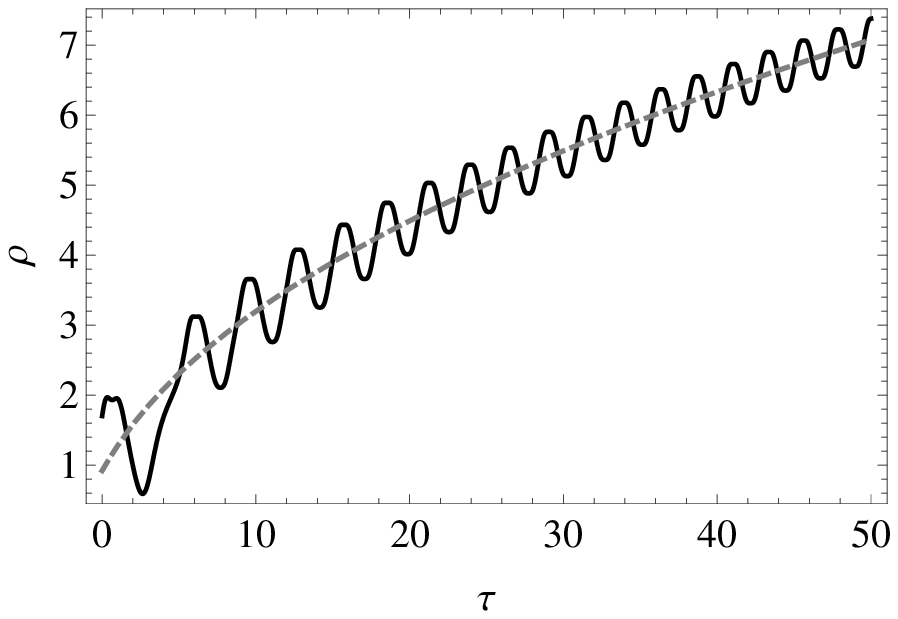}
\hspace{4ex}
\includegraphics[width=0.4\linewidth]{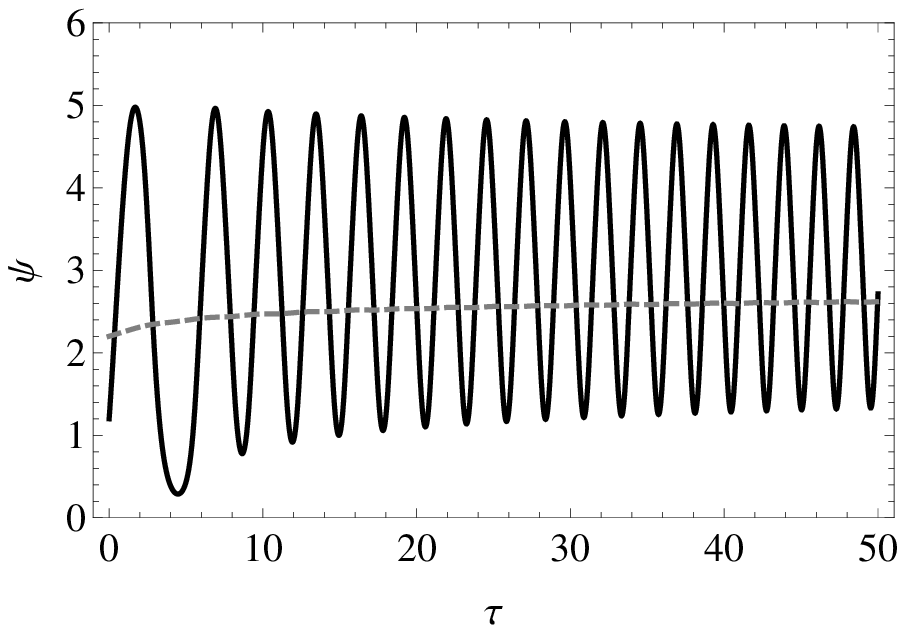}
\caption{\small The evolution of $\rho(\tau)$ and $\psi(\tau)$ for solutions of \eqref{MS} with $\lambda=1$, $\mu(\tau)\equiv \mu_0 (1+\tau)^{-1/2}$,  $\nu=0$, $\mu_0=-0.5$.} \label{rhopsi}
\end{figure}

We have the following.

\begin{Cor}
For all $\varkappa,\varepsilon\in (0,1)$ there exist $\Delta_0(\varkappa,\varepsilon)>0$ and $T_0(\varkappa)>0$ such that for all $(\rho^0,\psi^0)$: $(\rho^0-\rho_\ast(T_0))^2+(\psi^0-\psi_\ast(T_0))^2<\Delta_0^2$ the solution $\rho(\tau)$, $\psi(\tau)$ to system \eqref{MS} with initial data $\rho(T_0)=\rho^0$, $\psi(T_0)=\psi^0$ has the following estimates:
\begin{align}
\label{as1}& \rho=\sqrt{\lambda\tau}+\mathcal O(\tau^{-\frac{3+\varkappa}{8(1+\varkappa)}}), \ \  \psi=\sigma+\mathcal O(\tau^{-\frac{1-\varkappa}{8(1+\varkappa)}})  \ \ {\text as} \ \  \tau\to\infty \ \ \text{in {\bf Case I}};  \\
\label{as2} & \rho=\sqrt{\lambda\tau}+\mathcal O(\tau^{-\frac{7+\varkappa}{16(1+\varkappa)}}),  \ \  \psi=\sigma+\mathcal O(\tau^{-\frac{1-5\varkappa}{16(1+\varkappa)}})   \ \ {\text as} \ \  1\leq \frac{\tau}{T_0}\leq \mathcal O(\varepsilon^{-8/3}) \ \  \text{in {\bf Case II}}; \\
\label{as3} & \rho=\sqrt{\lambda\tau}+\mathcal O(\tau^{-\frac{11+\varkappa}{24(1+\varkappa)}}),  \ \  \psi=\sigma+\mathcal O(\tau^{-\frac{1-9\varkappa}{24(1+\varkappa)}})  \ \ {\text as} \ \  1\leq \frac{\tau}{T_0}\leq \mathcal O(\varepsilon^{-3}) \ \  \text{in {\bf Case III}}.
\end{align}
\end{Cor}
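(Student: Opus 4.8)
The plan is to obtain all three estimates by integrating the Lyapunov inequalities~\eqref{V1est}, \eqref{v2est}, \eqref{v3est} already established in the proofs of Theorems~\ref{cgs}, \ref{c2s} and~\ref{c3s}, and then to transport the resulting decay back through the chains of substitutions to the original variables $(\rho,\psi)$; throughout write $l_\varkappa=(1-\varkappa)/(1+\varkappa)$. In \textbf{Case~I}, take $T_0=\tau_0$ from the proof of Theorem~\ref{cgs}. Because $R=\tau^{1/4}(\rho-\rho_\ast(\tau))$, $\Psi=\psi-\psi_\ast(\tau)$ and $w_1(R,\Psi)=\sqrt{\lambda^{1/2}R^2+\omega_1^2\Psi^2/2}$ is comparable to $|R|+|\Psi|$ near the origin, one may choose $\Delta_0(\varkappa,\varepsilon)>0$ so small that $(\rho^0-\rho_\ast(T_0))^2+(\psi^0-\psi_\ast(T_0))^2<\Delta_0^2$ forces $w_1(R(T_0),\Psi(T_0))\le\delta_\varepsilon$, with $\delta_\varepsilon$ as in Theorem~\ref{cgs}; the stability proved there keeps the trajectory in $\mathcal D^{w_1}_{d_0,\tau_0}$ for all $\tau\ge T_0$ (in particular the solution extends to $\tau\ge T_0$), so~\eqref{V1est} holds along it. Integrating~\eqref{V1est} gives $V_1(\tau)\le V_1(T_0)(\tau/T_0)^{-l_\varkappa/4}$, whence $w_1(\tau)=\mathcal O(\tau^{-l_\varkappa/8})$ by the two-sided bound $(1-\varkappa)w_1^2\le V_1\le(1+\varkappa)w_1^2$, and therefore $\rho-\rho_\ast=\tau^{-1/4}R=\mathcal O(\tau^{-1/4-l_\varkappa/8})$ and $\psi-\psi_\ast=\Psi=\mathcal O(\tau^{-l_\varkappa/8})$. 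Adding $\rho_\ast(\tau)=\sqrt{\lambda\tau}+\mathcal O(\tau^{-1})$ and $\psi_\ast(\tau)=\sigma+\mathcal O(\tau^{-1/2})$ from~\eqref{PAS} (these corrections are dominated since $1/4+l_\varkappa/8=(3+\varkappa)/(8(1+\varkappa))<1$) yields~\eqref{as1}.

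\textbf{Cases~II} and \textbf{III} proceed identically, now based on systems~\eqref{hatham1} and~\eqref{hatham2}, the functions $V_2,V_3$, and the inequalities~\eqref{v2est}, \eqref{v3est}, which by Theorems~\ref{c2s} and~\ref{c3s} hold along the trajectory on the asymptotically long windows $1\le\tau/T_0\le\mathcal O(\varepsilon^{-8/3})$ and $1\le\tau/T_0\le\mathcal O(\varepsilon^{-3})$ provided $\Delta_0(\varkappa,\varepsilon)$ is chosen small relative to the corresponding $\delta_\varepsilon$ and $T_0$. Integration yields $V_2(\tau)\le V_2(T_0)(\tau/T_0)^{-3l_\varkappa/8}$ and $V_3(\tau)\le V_3(T_0)(\tau/T_0)^{-5l_\varkappa/12}$, hence $W_2(\tau)=\mathcal O(\tau^{-3l_\varkappa/16})$ and $W_3(\tau)=\mathcal O(\tau^{-5l_\varkappa/24})$ by the two-sided bounds for $V_2,V_3$. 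From the shapes $W_2=\sqrt{\lambda^{1/2}\varrho^2+\tau^{-3/4}\omega_2^2\varphi^2/2}$ and $W_3=\sqrt{\lambda^{1/2}\varrho^2+\tau^{-2/3}\omega_3^2\varphi^2/2}$ one reads off $|\varrho|=\mathcal O(W_j)$ together with $|\varphi|=\mathcal O(\tau^{3/8}W_2)$ in Case~II and $|\varphi|=\mathcal O(\tau^{1/3}W_3)$ in Case~III. Substituting into $\rho=\rho_\ast+\tau^{-1/4}\varrho$, $\psi=\psi_\ast+\tau^{-1/4}\varphi$ (Case~II) and $\rho=\rho_\ast+\tau^{-1/4}\varrho$, $\psi=\psi_\ast+\tau^{-1/6}\varphi$ (Case~III), adding the expansions~\eqref{PAS2}, \eqref{PAS3} for $\rho_\ast,\psi_\ast$, and simplifying the exponents $1/4+3l_\varkappa/16=(7+\varkappa)/(16(1+\varkappa))$ and $1/8-3l_\varkappa/16=-(1-5\varkappa)/(16(1+\varkappa))$ in Case~II, and $1/4+5l_\varkappa/24=(11+\varkappa)/(24(1+\varkappa))$ and $1/6-5l_\varkappa/24=-(1-9\varkappa)/(24(1+\varkappa))$ in Case~III, one obtains exactly~\eqref{as2} and~\eqref{as3}.

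The exponent arithmetic above is routine; the delicate point is what makes the Lyapunov inequalities applicable, namely keeping the rescaled trajectory inside the intersection of all the relevant sets (the $\mathcal D^{w_j}_{d,\tau}$, the balls $\mathcal B_d$, and the neighbourhoods where the asymptotic expansions of $H$ and $F$ are valid) throughout the needed $\tau$-range --- for all $\tau\ge T_0$ in Case~I, and across the entire asymptotically long window in Cases~II and~III, where only partial ($\rho$-)stability is available. This is precisely what Theorems~\ref{cgs}--\ref{c3s} supply once $\Delta_0$ is taken small enough in terms of $\varkappa$ and $\varepsilon$, and it is also what fixes the $\varepsilon$-dependent lengths of the intervals in~\eqref{as2} and~\eqref{as3}.
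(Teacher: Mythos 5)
Your proposal is correct and follows essentially the same route as the paper: take $\Delta_0=\delta_\varepsilon$, $T_0=\tau_0$ from Theorems~\ref{cgs}--\ref{c3s}, integrate the differential inequalities \eqref{V1est}, \eqref{v2est}, \eqref{v3est} along the trajectory to get $w_1=\mathcal O(\tau^{-l_\varkappa/8})$, $W_2=\mathcal O(\tau^{-3l_\varkappa/16})$, $W_3=\mathcal O(\tau^{-5l_\varkappa/24})$ on the corresponding time ranges, and then undo the changes of variables (and the choices \eqref{chv2}, \eqref{chv3}) to reach \eqref{as1}--\eqref{as3}. Your exponent arithmetic and the remark on keeping the trajectory in the validity domains match the paper's argument, which you merely spell out in slightly more detail.
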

\begin{proof}
Let us fix $\varkappa,\varepsilon\in (0,1)$.
Consider {\bf Case I}. Let $R(\tau)$, $\Psi(\tau)$  be a solution to system \eqref{ham} starting from the ball $\mathcal B_{\Delta_0}$ at $\tau=T_0$, where $\Delta_0=\delta_\varepsilon$ and $T_0=\tau_0$  (see Theorem~\ref{cgs}). Then it follows from \eqref{V1est} that the function $v_1(\tau):=V_1(R(\tau),\Psi(\tau),\tau)$ satisfies the inequality:
\begin{gather*}
        \frac{d v_1}{d\tau}\leq -\tau^{-1}l_\varkappa \frac{v_1}{4}
\end{gather*}
as $\tau\geq T_0$, where $l_\varkappa=(1-\varkappa)/(1+\varkappa)$.
Integrating the last expression with respect to $\tau$, we obtain $0\leq v_1(\tau)\leq v_1(T_0) (\tau/T_0)^{-l_\varkappa /4}$, where $0\leq v_1(T_0) \leq C_0\Delta_0^2$, $C_0={\hbox{\rm const}}$. Thus we have $w_1\big(R(\tau),\Psi(\tau)\big)=\mathcal O(\tau^{-l_\varkappa /8})$ as $\tau\to\infty$.  Returning to the original variables $(\rho,\psi)$, we derive \eqref{as1}.

Consider {\bf Case II}. Let $\varrho(\tau)$, $\varphi(\tau)$ be a solution to system \eqref{hatham1} starting from $\mathcal B_{\Delta_0}$ at $\tau=T_0$, where $\Delta_0=\delta_\varepsilon$, $T_0=\tau_0$ (see Theorem~\ref{c2s}). From \eqref{v2est} it follows that the derivative of the function $v_2(\tau):=V_2\big(\varrho (\tau),\varphi (\tau),\tau\big)$ satisfies the following inequality:
\begin{gather*}
     \frac{d v_2}{d\tau}\leq -\tau^{-1}l_\varkappa  \frac{3 v_2}{8}
\end{gather*}
as $1\leq \tau/T_0 \leq \mathcal O(\varepsilon^{-8/3})$.
By integrating the last estimate with respect to $\tau$, we get $0\leq v_2(\tau)\leq v_2(T_0) (\tau/T_0)^{-3 l_\varkappa /8}$,  where $0\leq v_2(T_0) \leq C_0\Delta_0^2$, $C_0={\hbox{\rm const}}$. Hence, $W_2(\varrho (\tau),\varphi (\tau),\tau)=\mathcal O(\tau^{-3 l_\varkappa /16})$ as $1\leq \tau/T_0\leq \mathcal O(\varepsilon^{-8/3})$.  The change of variables \eqref{chv2} transforms the last estimate into \eqref{as2}.

Finally, consider {\bf Case III}. Let $\varrho (\tau)$, $\varphi (\tau)$ be a solution to system \eqref{hatham2} with initial data from the domain $\mathcal B_{\Delta_0}$ at $\tau=T_0$, where $\Delta_0=\delta_\varepsilon$ and  $T_0=\tau_0$ (see Theorem~\ref{c3s}). From \eqref{v3est} it follows that the function $v_3(\tau):=V_3\big(\varrho (\tau),\varphi (\tau),\tau\big)$ satisfies the inequality:
\begin{gather*}
     \frac{d v_3}{d\tau}\leq -\tau^{-1} l_\varkappa  \frac{5 v_3}{12}
\end{gather*}
as $1\leq \tau/T_0\leq \mathcal O(\varepsilon^{-3})$.
As in the previous case, by integrating the last inequality, we obtain
$0\leq v_3(\tau)\leq v_3(T_0) (\tau/T_0)^{-5 l_\varkappa /12}$,  where $0\leq v_3(T_0) \leq C_0\Delta_0^2$, $C_0={\hbox{\rm const}}$. Hence, $W_3(\varrho (\tau),\varphi (\tau),\tau)=\mathcal O(\tau^{-5 l_\varkappa /24})$ as $1\leq \tau/T_0\leq \mathcal O(\varepsilon^{-3})$.  Combining this with \eqref{chv3}, we get \eqref{as3}.
\end{proof}

\section{Asymptotic analysis}

In this section, the asymptotics for general autoresonant solutions starting from a neighborhood of stable solutions are specified by a modified averaging method~\cite{BM61,ANJJAMM84,BDP01,AKN06} with using the constructed Lyapunov functions.

Asymptotics are most simply constructed in {\bf Case I}, when $\sigma$ is a simple root to equation \eqref{TEQ}. We have the following (see~\cite{OSSIAM18}).
\begin{Th}
\label{Thas0}
Let $\sigma$ be a simple root to equation \eqref{TEQ}. Then, in {\bf Case I}, system \eqref{MS} has two-parameter family of autoresonant solutions $\rho_{I}(\tau;\alpha_0,c)$, $\psi_{I}(\tau;\alpha_0,c)$ with the asymptotics
\begin{eqnarray*}
\rho_{I}(\tau;\alpha_0,c)&=&\sqrt{\lambda \tau}+ c \lambda^{-1/4} \tau^{-3/8} \cos \tilde A(\tau;\alpha_0)+\mathcal O(\tau^{-1/2}), \\
\psi_{I}(\tau;\alpha_0,c)&=&\sigma+  c \sqrt 2 \omega_1^{-1}\tau^{-1/8}\sin \tilde  A(\tau;\alpha_0) + \mathcal O(\tau^{-1/4})
\end{eqnarray*}
as $\tau\to\infty$,
where the function $\tilde A(\tau;\alpha_0)$ has the following form{\rm :}
\begin{gather*}
    \tilde A(\tau;\alpha_0)=\alpha_0+(4\lambda)^{1/4}\omega_1 \frac{ 4 }{5}\tau^{5/4}+\sum_{k=1}^4 \alpha_{k}(c)\tau^{k/4}+\alpha_{-1}(c)\log \tau, \\
   \omega_1=\sqrt{\mathcal P'(\sigma;\delta,\nu)}, \quad \alpha_4(c)=\frac{c^2 \sqrt\theta}{48 \omega_1^4}\Big(12  \omega_1^2 \mathcal P'''(\sigma;\delta,\nu)\mathcal -5(\mathcal P''(\sigma;\delta,\nu))^2\Big), \quad \alpha_k(c)={\hbox{\rm const}}.
\end{gather*}
\end{Th}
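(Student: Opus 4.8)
The plan is to obtain the family by the modified averaging scheme announced for Section~4 (in the spirit of~\cite{BM61,OSSIAM18}), using the Lyapunov function $V_1$ constructed in the proof of Theorem~\ref{cgs} for a priori control. By Theorem~\ref{cgs} the equilibrium of system~\eqref{ham} is stable in {\bf Case~I}, so there is a two-parameter family of solutions $(R(\tau),\Psi(\tau))$ staying in a small ball $\mathcal B_d$ for all $\tau\geq\tau_0$, and the Corollary gives the a priori decay $R,\Psi=\mathcal O(\tau^{-(1-\varkappa)/(8(1+\varkappa))})$. In {\bf Case~I} one has $H=\tau^{1/4}h_{-1}(R,\Psi)+\tau^{-1/4}h_1(\Psi)+\mathcal O(\tau^{-1/2})$ with $h_{-1}=\lambda^{1/2}R^2+\int_0^\Psi\mathcal P(\sigma+\zeta;\delta,\nu)\,d\zeta=\lambda^{1/2}R^2+\tfrac12\omega_1^2\Psi^2+\tfrac16\mathcal P''(\sigma;\delta,\nu)\Psi^3+\tfrac1{24}\mathcal P'''(\sigma;\delta,\nu)\Psi^4+\dots$, so to leading order the motion is a nonlinear oscillator whose instantaneous frequency grows like $(4\lambda)^{1/4}\omega_1\tau^{1/4}$. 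Integrating this frequency yields the $\tfrac45(4\lambda)^{1/4}\omega_1\tau^{5/4}$ term of $\tilde A$ and, what matters for the construction, keeps the frequency bounded away from zero, so that no small divisors occur.

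First I would pass to amplitude--phase variables adapted to the leading quadratic part, $R=\lambda^{-1/4}a\cos\Theta$, $\Psi=\sqrt2\,\omega_1^{-1}a\sin\Theta$, for which $\lambda^{1/2}R^2+\tfrac12\omega_1^2\Psi^2=a^2$, and rewrite~\eqref{ham} as $\dot a=\tau^{-1/4}\Phi_a(a,\Theta,\tau)$, $\dot\Theta=(4\lambda)^{1/4}\omega_1\tau^{1/4}+\Phi_\Theta(a,\Theta,\tau)$, where $\Phi_a,\Phi_\Theta$ are $2\pi$-periodic in $\Theta$ and admit asymptotic expansions in integer powers of $\tau^{-1/4}$ with coefficients polynomial in $a$. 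Then I would carry out the standard sequence of near-identity substitutions $a=\bar a+\sum_{j\geq1}\tau^{-j/4}u_j(\bar a,\bar\Theta)$, $\Theta=\bar\Theta+\sum_{j\geq1}\tau^{-j/4}w_j(\bar a,\bar\Theta)$, choosing $u_j,w_j$ ($2\pi$-periodic in $\bar\Theta$, zero mean) so as to cancel the oscillating part of each order; since the frequency is $\sim\tau^{1/4}\neq0$ the corresponding cohomological equations are solvable and the transformations are near-identity and uniformly invertible on $\mathcal B_d\times[\tau_0,\infty)$. The averaged system then has the form $\dot{\bar a}=-\tfrac18\tau^{-1}\bar a\,(1+\mathcal O(\tau^{-1/4}))$ --- the coefficient $-\tfrac18$ being forced by $\frac{dV_1}{d\tau}=-\tfrac14\tau^{-1}V_1(1+o(1))$ together with $V_1=a^2+\mathcal O(a^3)$ --- and $\dot{\bar\Theta}=(4\lambda)^{1/4}\omega_1\tau^{1/4}+\tau^{1/4}\nu_1\bar a^2+\mathcal O(\tau^{-1/4})$, where $\nu_1\bar a^2$ is the nonlinear (amplitude-dependent) frequency shift of the cubic--quartic oscillator.

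Next I would integrate the averaged equations. From $\dot{\bar a}=-\tfrac18\tau^{-1}\bar a(1+o(1))$ one gets $\bar a(\tau)=c\,\tau^{-1/8}(1+o(1))$ with $c>0$ the first free constant (essentially $\lim_{\tau\to\infty}\tau^{1/4}V_1$); consequently $\bar a^2\tau^{1/4}\to c^2$, so the term $\tau^{1/4}\nu_1\bar a^2$ in $\dot{\bar\Theta}$ is $\mathcal O(1)$ and integrates to the $\alpha_4(c)\tau$ term, with $\alpha_4(c)$ quadratic in $c$ and proportional to the combination $12\omega_1^2\mathcal P'''(\sigma;\delta,\nu)-5(\mathcal P''(\sigma;\delta,\nu))^2$ familiar from the Lindstedt--Poincar\'e expansion of $\lambda^{1/2}R^2+\tfrac12\omega_1^2\Psi^2+\tfrac16\mathcal P''\Psi^3+\tfrac1{24}\mathcal P'''\Psi^4$; the subleading orders of $\dot{\bar\Theta}$ (coming from $\tau^{-1/4}h_1$, from $F$, and from the cubic-in-$R$ part of $H$) integrate to the remaining $\alpha_k(c)\tau^{k/4}$, $k=1,2,3$, while the $\mathcal O(\tau^{-1})$ part of $\dot{\bar\Theta}$ --- in particular the $\tau^{-1}R\Psi/4$ term of $H$ and the $\tau^{-1}\Psi/4$ term of $F$ --- produces the $\alpha_{-1}(c)\log\tau$ term, the integration constant of the phase being the second free constant $\alpha_0$. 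Finally, undoing the near-identity transformations, identifying $\Theta$ with $\tilde A$, and returning via $\rho=\rho_\ast+\tau^{-1/4}R$, $\psi=\psi_\ast+\Psi$ with $\rho_\ast=\sqrt{\lambda\tau}+\mathcal O(\tau^{-1})$, $\psi_\ast=\sigma+\mathcal O(\tau^{-1/2})$, yields the asserted asymptotics for $\rho_I,\psi_I$, all contributions from the transformations and from $\rho_\ast-\sqrt{\lambda\tau}$, $\psi_\ast-\sigma$ being absorbed into the $\mathcal O(\tau^{-1/2})$ and $\mathcal O(\tau^{-1/4})$ remainders.

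The main obstacle is the rigorous justification: one must show that the formal two-parameter solution constructed above is actually shadowed by a genuine solution of~\eqref{MS}, and that $(c,\alpha_0)$ is a true parametrization of a two-parameter family. Here the Lyapunov function $V_1$ is indispensable --- it furnishes the a priori smallness that legitimizes the expansions (Theorem~\ref{cgs} and the Corollary) and, through~\eqref{V1est}, pins down the leading decay $\bar a\sim c\,\tau^{-1/8}$ --- while the matching with Cauchy data at $\tau=\tau_0$ follows from continuous dependence together with a contraction/implicit-function argument applied to the difference between a genuine solution and a sufficiently long truncation of the asymptotic series, exactly as the existence results of~\cite{AK89,KF13} are invoked in Section~2. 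The rest --- keeping track of the many intermediate orders from $\tau^{5/4}$ down through $\log\tau$ in the phase and verifying that each averaging step remains near-identity --- is laborious but routine once the scheme is set up.
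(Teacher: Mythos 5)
Your plan is essentially sound, and it is worth noting that the paper itself does not prove this statement: Theorem~\ref{Thas0} is quoted from~\cite{OSSIAM18}, and what Section~4 actually proves in detail are the analogous Theorems~\ref{Thas1} and~\ref{Thas2} at the bifurcation points, by the same modified-averaging-with-Lyapunov-function strategy you outline. Against that template your route is correct in structure — the a priori smallness and the decay exponent come from $V_1$ via \eqref{V1est} (so $w_1\sim c\,\tau^{-1/8}$), and the phase follows by integrating the slowly modulated frequency, giving the $\tfrac45(4\lambda)^{1/4}\omega_1\tau^{5/4}$ term, the corrections $\alpha_k(c)\tau^{k/4}$ and the $\log\tau$ term from the $\mathcal O(\tau^{-1})$ parts of $H$ and $F$ — but your implementation differs from the paper's. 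You linearize around the quadratic part $\lambda^{1/2}R^2+\omega_1^2\Psi^2/2$, pass to polar (amplitude--phase) coordinates and kill oscillating terms by iterated near-identity substitutions, so the amplitude-dependent frequency shift $\alpha_4(c)\propto 12\omega_1^2\mathcal P'''(\sigma;\delta,\nu)-5(\mathcal P''(\sigma;\delta,\nu))^2$ only emerges at second-order averaging through the Lindstedt--Poincar\'e computation; the paper's scheme (in the proofs of Theorems~\ref{Thas1}, \ref{Thas2}, and in~\cite{OSSIAM18} for Case~I) instead uses the action--angle variables of the limiting nonlinear Hamiltonian and then replaces the action by the Lyapunov function itself as the new dependent variable, $\mathcal L(s)=\widetilde U(\alpha(s),I(s),s)$, so that the averaged amplitude equation and the nonlinear frequency $\omega(I)$ appear at the first step and the higher-order bookkeeping is shorter; your variant avoids constructing the nonlinear action--angle map at the price of one more order of averaging. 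The one soft spot is the final justification that genuine solutions of \eqref{MS} shadow the averaged construction and that $(c,\alpha_0)$ is a true parametrization of a two-parameter family: you only gesture at a contraction/implicit-function argument, but this is at essentially the same level of detail as the paper's own proofs, which likewise conclude by returning to the original variables after the averaging transformations are shown to be reversible.
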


In other cases, the stability of the particular solutions $\rho_\ast(\tau)$, $\psi_\ast(\tau)$ has not been justified for all $\tau\geq 0$. Therefore, the asymptotics for general autoresonant solutions are constructed only on the asymptotically long time intervals.

\begin{Th}
\label{Thas1}
Let $\sigma$ be a root of multiplicity 2 to equation \eqref{TEQ}. Then, in {\bf Case II}, there exists $T_0>0$ such that a two-parameter family of autoresonant solutions $\rho_{II}(\tau;\alpha_0,\varepsilon)$, $\psi_{II}(\tau;\alpha_0,\varepsilon)$ to system \eqref{MS}, starting at $\tau=T_0$ from $\varepsilon$-neighbourhood of the solution $\rho_\ast(\tau)$, $\psi_\ast(\tau)$, has the following asymptotics{\rm :}
\begin{eqnarray*}
\rho_{II}(\tau;\alpha_0,\varepsilon)&=&\rho_\ast(\tau)+ \varepsilon\lambda^{-1/4}  \tau^{-{7}/{16}}  {\tilde l_0(\tau) }\cos \tilde A(\tau;\alpha_0)+O(\varepsilon^{7/6}), \\
\psi_{II}(\tau;\alpha_0,\varepsilon)&=&\psi_\ast(\tau)+\varepsilon\sqrt 2 \omega_2^{-1}   \tau^{-{1}/{16}}  {\tilde l_0(\tau)} \sin \tilde A(\tau;\alpha_0)+O(\varepsilon^{7/6})
\end{eqnarray*}
as $\varepsilon\to 0$ uniformly for $\tau/T_0\in (1,\mathcal O(\varepsilon^{-8/3}))$, where $\tilde  A(\tau;\alpha_0)$ has the following form{\rm :}
\begin{gather*}
       \tilde A(\tau;\alpha_0)=\alpha_0+ (4\lambda)^{1/4} \omega_2  \frac{8}{9}\tau^{9/8} +\sum_{k=1}^3 \varepsilon^{2k} \tilde\alpha_k(\tau),
\end{gather*}
with $\omega_2=\sqrt{-\phi \mathcal P''(\sigma;\delta,\nu)}$, $\tilde\alpha_k(\tau)=\mathcal O(\tau^{(9+3k)/8})$ and $\tilde l_{0}(\tau)=1+\mathcal O(\tau^{-1/8})$ as $\tau\to\infty$.
\end{Th}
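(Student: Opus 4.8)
The plan is to build on the $\varrho$-stability established in Theorem~\ref{c2s} and to run a modified averaging procedure on system \eqref{hatham1}, which is already written in near-Hamiltonian form with a positive definite leading quadratic Hamiltonian $\hat H_2(\varrho,\varphi,\tau)\sim \tau^{1/2}\lambda^{1/2}\varrho^2+\tau^{-1/4}\omega_2^2\varphi^2/2$. First I would pass to action–angle-type variables adapted to this time-dependent quadratic form: write $\varrho=\tau^{-1/8}\sqrt{2a}\,\lambda^{-1/4}\cos A$, $\varphi=\tau^{1/16}\sqrt{2a}\,\omega_2^{-1}\sin A$ (up to normalizing constants), so that the leading part of $\hat H_2$ becomes $\propto \tau^{-1/16}a\,\omega_2$ and, in the new time $s=\tfrac{8}{9}\tau^{9/8}$ used already in the proof of Theorem~\ref{cgs}, the angle $A$ rotates with the leading frequency $(4\lambda)^{1/4}\omega_2$ (this is exactly the Case~II eigenvalue asymptotics $z_\pm\sim \pm i(4\lambda)^{1/4}\tau^{1/8}\sqrt{-\phi\mathcal P''}$ rewritten). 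In these variables the system takes the standard slow–fast form $da/ds=\mathcal R_a(a,A,s)$, $dA/ds=(4\lambda)^{1/4}\omega_2+\mathcal R_A(a,A,s)$ with the perturbations $\mathcal R_{a},\mathcal R_A$ admitting asymptotic expansions in powers of $s^{-1/9}$ (equivalently $\tau^{-1/8}$) with $2\pi$-periodic-in-$A$ coefficients that are polynomial in $\sqrt a$, inherited from the expansions of $\hat H_2$ and $\hat F_2$.

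Next I would perform finitely many steps of the near-identity averaging change of variables $a\mapsto a+\sum \tau^{-k/8}u_k(a,A)$, $A\mapsto A+\sum\tau^{-k/8}w_k(a,A)$, killing the $A$-dependence in $\mathcal R_a$ order by order; because $\mathcal R_a$ is a total $s$-derivative of the Hamiltonian part plus the small dissipative correction $\mathcal F_2$, the averaged equation for $a$ has no $O(s^{-1})$ secular drift coming from the Hamiltonian terms — the only genuine drift is the dissipative one already isolated in \eqref{dU2}/\eqref{v2est}, of size $O(s^{-1})$ with the correct sign to keep $a$ bounded. The averaged angle equation then integrates to $\tilde A(\tau;\alpha_0)=\alpha_0+(4\lambda)^{1/4}\omega_2\tfrac{8}{9}\tau^{9/8}+\sum_{k=1}^{3}\varepsilon^{2k}\tilde\alpha_k(\tau)$ with $\tilde\alpha_k(\tau)=\mathcal O(\tau^{(9+3k)/8})$: the powers of $\varepsilon$ arise because $a=\mathcal O(\varepsilon^2)$ on the stability ball (amplitude $\mathcal O(\varepsilon)$), and successive corrections to the frequency carry the cubic/quartic nonlinearities $\mathcal P''$, $\mathcal P'''$, each bringing an extra factor $a\sim\varepsilon^2$. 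One then reads off the amplitude factor $\tilde l_0(\tau)=1+\mathcal O(\tau^{-1/8})$ from the first averaging correction to $a$, and the remainder $O(\varepsilon^{7/6})$ from stopping the expansion after the term that is of relative size $\varepsilon^{1/6}$ over the interval $\tau/T_0\in(1,\mathcal O(\varepsilon^{-8/3}))$ — note $\tau^{1/8}\asymp \varepsilon^{-1/3}$ at the right endpoint, so $\tau^{-1/8}$-corrections to the leading $\varepsilon$-term contribute $\varepsilon\cdot\varepsilon^{1/3}=\varepsilon^{4/3}$, while the quadratic-in-amplitude self-interaction contributes $\varepsilon^2\cdot\varepsilon^{-...}$; balancing these against the window length fixes the stated $\varepsilon^{7/6}$.

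The main obstacle, and the step I would spend the most care on, is controlling the error \emph{uniformly} on the long but finite interval $\tau/T_0\in(1,\mathcal O(\varepsilon^{-8/3}))$ rather than on all of $[T_0,\infty)$: the averaging transformations and the resulting differential inequality for the remainder must be shown to stay within the $\varrho$-stability ball of Theorem~\ref{c2s}, so that the solution does not leave the region where the expansions of $\hat H_2$, $\hat F_2$ are valid before time $\mathcal O(\varepsilon^{-8/3})$. Concretely, I would close a Gronwall-type estimate for the deviation between the true solution and the truncated asymptotic ansatz, using $v_2(\tau)=V_2(\varrho,\varphi,\tau)$ as the Lyapunov control so that the bound $0\le v_2(\tau)\le v_2(T_0)(\tau/T_0)^{-3l_\varkappa/8}$ from the Corollary keeps the amplitude of order $\varepsilon$ throughout; the secular growth $\tau^{(9+3k)/8}$ of the $\tilde\alpha_k$ is harmless precisely because it is multiplied by $\varepsilon^{2k}$ and $\varepsilon^{2k}\tau^{(9+3k)/8}\lesssim \varepsilon^{2k}\varepsilon^{-(9+3k)/3}=\varepsilon^{(k-3)/... }$ stays bounded (indeed small for $k\le 2$, borderline for $k=3$) on the admissible window. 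Verifying these bookkeeping inequalities, and checking that no additional resonances between the fast angle $A$ and the slowly varying coefficients occur, is the technical heart of the argument; everything else is the standard near-identity averaging machinery applied to \eqref{hatham1}. Returning to the original variables $(\rho,\psi)$ via \eqref{chv2} and $R=\rho-\rho_\ast$, $\Psi=\psi-\psi_\ast$ then yields the stated asymptotics for $\rho_{II}$, $\psi_{II}$.
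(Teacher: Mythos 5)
Your overall strategy (action--angle variables plus averaging in the slow time $s=\tfrac89\tau^{9/8}$, with Lyapunov estimates controlling the window of validity) is in the right family, but two of your central claims are wrong and would prevent the argument from producing the stated asymptotics. First, the ansatz $\varrho=\tau^{-1/8}\sqrt{2a}\,\lambda^{-1/4}\cos A$, $\varphi=\tau^{1/16}\sqrt{2a}\,\omega_2^{-1}\sin A$ does not balance the quadratic form $\tau^{1/2}\lambda^{1/2}\varrho^2+\tau^{-1/4}\omega_2^2\varphi^2/2$: the two terms become of orders $\tau^{1/4}a$ and $\tau^{-1/8}a$, so the leading part is not proportional to $\omega(\tau)\,a$ and the angle does not rotate with the frequency you quote. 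The correct balancing requires $\varrho\propto\tau^{-3/16}\sqrt a$, $\varphi\propto\tau^{3/16}\sqrt a$; with your exponents and a bounded action, back-substitution gives $\rho-\rho_\ast=\mathcal O(\varepsilon\tau^{-3/8})$ and $\psi-\psi_\ast=\mathcal O(\varepsilon\tau^{-3/16})$, contradicting the powers $\tau^{-7/16}$ and $\tau^{-1/16}$ of the theorem you are proving. Second, the claim that the averaged drift is ``dissipative, with the correct sign to keep $a$ bounded,'' supported by the decaying bound \eqref{v2est} for $V_2$, conflates the two different Lyapunov functions of Section 3. In {\bf Case II} the equilibrium is unstable: the relevant monotone quantity is $U_2$, whose derivative is positive (see \eqref{dU2}, \eqref{LFD1}); in the paper's construction the averaged action-like variable built from $U_2$ obeys $dL/ds=L/3+\mathcal O(L^{3/2})$ and grows like $\varepsilon^2 s^{1/3}$, and it is exactly this growth (equivalently, the growth of the phase amplitude $\sim\varepsilon\tau^{3/16}$) that produces the amplitude powers $\tau^{-7/16}$, $\tau^{-1/16}$, generates the secular phase corrections $\tilde\alpha_k(\tau)=\mathcal O(\tau^{(9+3k)/8})$ through the amplitude-dependent frequency $\omega(I)$, and caps the interval at $\tau=\mathcal O(\varepsilon^{-8/3}T_0)$. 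The decaying $V_2$ controls only the weighted norm $W_2$ (partial $\varrho$-stability) and cannot keep the amplitude ``of order $\varepsilon$ throughout'' in the sense you use it.

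Beyond these points, the quantitative steps you yourself call the technical heart are not carried out: the exponent arithmetic for the remainder contains placeholders ($\varepsilon^{2}\cdot\varepsilon^{-\dots}$, $\varepsilon^{(k-3)/\dots}$), and $\mathcal O(\varepsilon^{7/6})$ is asserted rather than derived (in the paper it arises as the product of the maximal amplitude $\varepsilon s^{1/6}\lesssim\varepsilon^{1/2}$ on the window with the accumulated phase error $\mathcal O(\varepsilon^{2/3})$ of the averaged angle). Note also that the paper's route differs in a way that matters: it works with system \eqref{ham1} in the variables $(r,p)$, where the limiting Hamiltonian $h_2^0$ is autonomous, takes action--angle variables of the full nonlinear $h_2^0$ (including the cubic term, which supplies $\omega(I)$ and hence the $\tilde\alpha_k$), and then uses the Lyapunov function $U_2$ itself as the new dependent ``action'' variable before averaging. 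If you insist on averaging \eqref{hatham1} with only the quadratic normal form, you must recover both the nonlinear frequency shift and the secular behaviour of the action through the near-identity corrections, which your sketch does not do.
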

\begin{proof}
Let $\rho_\ast(\tau)$, $\psi_\ast(\tau)$ be a particular solution to system \eqref{MS} with asymptotics \eqref{PAS2}, where $\psi_1=-\phi$. We apply the change of variables
\begin{gather*}
\rho(\tau)=\rho_\ast(\tau)+ \tau^{-5/8} r(s),\quad \psi(\tau)=\psi_\ast(\tau)+\tau^{-1/4} p(s), \quad s=\frac{8}{9} \tau^{9/8}
\end{gather*}
in system \eqref{MS} and study the solutions of transformed system \eqref{ham1} in a neighborhood of the unstable equilibrium $(0,0)$.
Consider the Hamiltonian system:
\begin{gather*}
    \frac{d r}{d \alpha}=-\partial_p  h_2^0(r,p), \quad \frac{d p}{d \alpha}=\partial_r h_2^0(r,p),
\end{gather*}
where
\begin{gather*}
h_2^0(r,p)=\lim\limits_{s\to\infty}\mathcal H_2(r,p,s)= \lambda^{1/2} r^2 +  \omega_2^2 \frac{p^2}{2}+\mathcal P''(\sigma;\delta,\nu)\frac{p^3}{6}.
 \end{gather*}
It is follows from the definition of the function $h_2^0(r,p)$ that the level lines $\{(r,p)\in\mathbb R^2:   h_2^0(r,p)\equiv I\}$ define a family of closed curves on the phase space $(r,p)$ parameterized by the parameter $I\in (0, I_\ast)$, $I_\ast=2(\omega_2 \phi)^2/3$. It can easily be checked that to each closed curve there corresponds a periodic solution $\xi(\alpha,I)$, $\eta(\alpha,I)$ of period $T(I)=2\pi (\omega(I))^{-1}$,  where
\begin{gather*}
\omega(I)=(4\lambda)^{1/4}\omega_2 -\frac{5 I}{48 (\omega_2 \phi)^2} +\mathcal O(I^{2}), \quad I\to 0.
\end{gather*}
These solutions are used in the definition of the functions
\begin{gather*}
      \tilde r(\alpha,I)=\xi\Big(\frac{\alpha}{\omega},I\Big), \quad
      \tilde p(\alpha,I)=\eta\Big(\frac{\alpha}{\omega},I\Big),
\end{gather*}
that are $2\pi$-periodic with respect to $\alpha$. Consider the change of variables
\begin{gather}
\label{chrp2}
    r(s)=\tilde r\big(\alpha(s),I(s)\big), \quad
    p(s)=\tilde p\big(\alpha(s),I(s)\big)
\end{gather}
in system \eqref{ham1}. Since
\begin{gather*}
\omega(I)\frac{\partial \tilde r}{\partial \alpha} = -\partial_p h_2^0 (\tilde r,\tilde p), \quad \omega(I)\frac{\partial \tilde p}{\partial \alpha} = \partial_r h_2^0 (\tilde r,\tilde p), \\
   \frac{d}{dI} h_2^0(\tilde r,\tilde p)=\omega(I)\Big(\partial_I \tilde r \partial_\alpha \tilde p - \partial_\alpha \tilde r \partial_I \tilde p  \Big) \equiv 1,
\end{gather*}
the transformation of variables \eqref{chrp2} is reversible while $\omega(I)\neq 0$.
It can easily be checked that the system in the action-angle variables $(I,\alpha)$ has the following form:
\begin{gather}
\label{sysIa2}
  \frac{dI}{ds} = \mathcal  F^I(\alpha,I,s), \quad \frac{d\alpha}{ds} =   \mathcal  F^\alpha(\alpha,I,s),
\end{gather}
where
\begin{eqnarray*}
   \mathcal   F^I(\alpha,I,s) &  := & -   \omega(I) \Big(\partial_\alpha \widetilde {\mathcal H}(\alpha,I,s) +\partial_\alpha \tilde r (\alpha,I) \widetilde {\mathcal F}(\alpha,I,s)\Big), \\
 \mathcal   F^\alpha(\alpha,I,s) & :=&  \omega(I)  \Big( \partial_I \widetilde {\mathcal H}(\alpha,I,s) +\partial_I \tilde r (\alpha,I) \widetilde {\mathcal F}(\alpha,I,s)\Big), \\
\widetilde {\mathcal F}(\alpha,I,s)&:= &\mathcal F(\tilde r(\alpha,I),\tilde p(\alpha,I),s) \\
\widetilde {\mathcal H}(\alpha,I,s)&:= &\mathcal H(\tilde r(\alpha,I),\tilde p(\alpha,I),s).
\end{eqnarray*}
Note that $\mathcal F^I(\alpha,I,s)$ and $\mathcal F^\alpha(\alpha,I,s)$ are $2\pi$-periodic functions with respect to $\alpha$ such that
\begin{gather*}
\mathcal F^I(\alpha,I,s)=\mathcal O(s^{-2/9}), \quad \mathcal F^\alpha(\alpha,I,s)=\omega(I)+\mathcal O(s^{-2/9})
\end{gather*}
as $s\to\infty$ and for all $\alpha\in\mathbb R$ and $I\in [0,I_\ast]$.
To simplify system \eqref{sysIa2}, we introduce a new dependent variable $\mathcal L(s)$ associated with the Lyapunov function \eqref{UF1} such that
\begin{gather}
\label{exchLF2}
\mathcal L(s):=\widetilde U_2(\alpha(s),I(s),s),
\end{gather}
where $\widetilde U_2(\alpha,I,s):= U_2(\tilde r(\alpha,I),\tilde p(\alpha,I),s)$ is $2\pi$-periodic function in $\alpha$.
It can easily be checked that $\widetilde U_2(\alpha,I,s)=I+\mathcal O( s^{-2/9})$  as $s\to \infty$ for all $\alpha\in \mathbb R$ and $I\in [0,I_\ast]$. Hence, the transformation $(I,\alpha)\mapsto (\mathcal L,\alpha)$ is reversible for all $L\in [0,L_\ast]$, $L_\ast={\hbox{\rm const}}>0$ and $\alpha\in \mathbb R$. The transformed system is given by
\begin{gather}
\label{sysLa2}
    \frac{d\mathcal L}{ds}=  \mathcal G(\alpha,\mathcal L,s), \quad \frac{d\alpha}{d s}=\mathcal Q(\alpha,\mathcal L,s),
\end{gather}
where
\begin{gather}
\begin{split}
\label{GQ}
    \mathcal G(\alpha,\widetilde U_2(\alpha,I,s),s)& \equiv \partial_\alpha \widetilde U_2 (\alpha,I,s)\frac{d\alpha}{ds} + \partial_I \widetilde U_2(\alpha,I,s) \frac{dI}{ds}+\partial_s \widetilde U_2 (\alpha,I,s) \\
        & = \partial_r U_2(\tilde r, \tilde p,s) \frac{dr}{ds}+  \partial_p U_2(\tilde r, \tilde p,s) \frac{dp}{ds}+ \partial_s U_2(\tilde r, \tilde p,s) \\
        & = \frac{d}{ds} \Big|_{\eqref{ham1}}U_2(\tilde r(\alpha,I), \tilde p(\alpha,I),s) \\
     \mathcal Q(\alpha,\widetilde U_2(\alpha,I,s),s)&\equiv   \mathcal F^\alpha(\alpha,I,s).
    \end{split}
\end{gather}
It is not difficult to deduce from \eqref{GQ} and \eqref{dU2} the asymptotics of the functions $\mathcal G$ and $\mathcal Q$ at infinty:
\begin{gather*}
      \mathcal G(\alpha,\mathcal L,s)=s^{-1}\sum_{k=0}^\infty  g_k(\alpha,\mathcal L) s^{-k/9}, \quad    \mathcal Q(\alpha,\mathcal L,s)=\omega(\mathcal L)+\sum_{k=2  }^\infty q_k(\alpha,\mathcal L) s^{-k/9},
\end{gather*}
where $g_k(\alpha,\mathcal L)$ and $q_k(\alpha,\mathcal L) $ are $2\pi$-periodic functions with respect to $\alpha$, and $g_0(\alpha,\mathcal L)= \mathcal L/3+\mathcal O(\mathcal L^{3/2})$ as $\mathcal L\to 0$ for all $\alpha\in\mathbb R$.
For the convenience, we rewrite system \eqref{sysLa2} in a near-Hamiltonian form:
\begin{gather}
\label{sysLa2Ham}
    \frac{d\mathcal L}{ds}= -\partial_\alpha \mathcal M + \mathcal J, \quad \frac{d\alpha}{ds}=\partial_\mathcal L \mathcal M,
\end{gather}
where
\begin{gather*}
   \mathcal M(\alpha,\mathcal L,s)=\int\limits_0^{\mathcal L}  \mathcal Q(\alpha,l,s)\, d l, \quad  \mathcal J(\alpha,\mathcal L,s)=\mathcal G(\alpha,\mathcal L,s)+\partial_\alpha \mathcal M(\alpha,\mathcal L,s).
\end{gather*}
The asymptotic solution to the first equation in \eqref{sysLa2Ham} is sought in the form:
\begin{gather*}
    \mathcal L(s)=L (s)+\ell (\alpha, L(s),s),
\end{gather*}
where
$L(s)$ is determined from the averaged equation
\begin{gather}
\label{averA1}
    \frac{dL}{ds}= \Big\langle \mathcal J\big(\alpha,L+\ell(\alpha,L,s),s\big)\Big\rangle_\alpha.
\end{gather}
Then $\ell(\alpha,L,s)$ satisfies the equation:
\begin{gather}
\label{Meq1}
    \frac{d}{d\alpha} \mathcal M\big(\alpha,L+\ell(\alpha,L,s),s\big)=\mathcal F^{\mathcal M}\big(\alpha,L,\ell(\alpha,L,s),s\big)
\end{gather}
with
\begin{eqnarray*}
    \mathcal F^{\mathcal M}:=  \mathcal J(\alpha,L+\ell(\alpha,L,s),s) -\big(1+\partial_{L} \ell(\alpha,L,s)\big) \Big\langle \mathcal J(\alpha,L+\ell(\alpha,L,s),s\big)\Big\rangle_\alpha-\partial_s \ell(\alpha,L,s).
\end{eqnarray*}
In addition, it is assumed that $\ell(\alpha,L,s)$ is a $2\pi$-periodic function with zero average (with respect to $\alpha$):
\begin{gather*}
    \big\langle \ell(\alpha,L,s)\big\rangle_\alpha:=\frac{1}{2\pi}\int\limits_0^{2\pi} \ell (a,L,s)\, da\equiv 0,
\end{gather*}
Equation \eqref{Meq1} can be integrated with respect to $\alpha$ by choosing the constant of integration in such a way that the result has a zero average:
\begin{gather}
\label{int1}
\begin{split}
  \mathcal M\big(\alpha,L+\ell(\alpha,L,s),s\big)- \Big \langle  \mathcal M\big(\alpha,L+\ell(\alpha,L,s),s\big)\Big\rangle_\alpha \\
  = \int \mathcal F^{\mathcal M}\big(\alpha,L,\ell(\alpha,L,s),s\big) \,d\alpha - \Big \langle  \int\limits \mathcal F^{\mathcal M}\big(\alpha,L,\ell(\alpha,L,s),s\big) \,d\alpha\Big\rangle_\alpha
\end{split}
\end{gather}
The asymptotic solution to equation \eqref{int1} is constructed in the form:
\begin{gather}
\label{serell1}
        \ell (\alpha,L,s)=s^{-1}\sum_{k=0}^\infty \ell_k(\alpha,L)s^{-k/9} .
\end{gather}
Substituting this series into equation \eqref{int1} and equating the terms of the same power of $s$, we obtain the following chain of equations:
\begin{gather*}
    \omega (L)   \ell_k  = \Lambda_k (\alpha,L),\quad k\geq 0,
\end{gather*}
 where each function $\Lambda_k(\alpha,L)$, $k\geq 1$ is expressed through $\ell_0$, $\dots$, $\ell_{k-1}$ such that $\langle \Lambda_k(\alpha,L)\rangle_\alpha =0$. For example,
\begin{eqnarray*}
    \Lambda_0 (\alpha,L)
        & = &
            \int g_0(\alpha,L)-\langle g_0(\alpha,L) \rangle_\alpha\, d\alpha-\Big\langle\int g_0(\alpha,L)-\langle g_0(\alpha,L) \rangle_\alpha \,d\alpha\Big\rangle_\alpha,\\
    \Lambda_1 (\alpha,L)
        & = &
            \int g_1(\alpha,L)-\langle g_1(\alpha,L) \rangle_\alpha\, d\alpha-\Big\langle\int g_1(\alpha,L)-\langle g_1(\alpha,L) \rangle_\alpha \,d\alpha\Big\rangle_\alpha,\\
    \Lambda_2 (\alpha,L)
        & = &
            \int g_2(\alpha,L)-\langle g_2(\alpha,L) \rangle_\alpha\, d\alpha-\Big\langle\int g_2(\alpha,L)-\langle g_2(\alpha,L) \rangle_\alpha \,d\alpha\Big\rangle_\alpha\\
        &   &
            +\int \ell_0 \partial_\alpha q_2 - \langle \ell_0  \partial_\alpha q_2 \rangle \, d\alpha - \Big\langle\int \ell_0 \partial_\alpha q_2 - \langle \ell_0  \partial_\alpha q_2 \rangle \, d\alpha \Big\rangle_\alpha \\
        &   &
            + \big(\partial_L \ell_0 - \langle \partial_L \ell_0 \rangle_\alpha\big)\Big\langle \int\limits_0^L \partial_\alpha q_2(\alpha,l)\, dl\Big\rangle_\alpha -\ell_0 q_2 +\langle \ell_0 q_2\rangle_\alpha.
\end{eqnarray*}
Thus, all coefficients $\ell_k$ are uniquely determined in the class of $2\pi$-periodic functions with zero average $\langle \ell_k(\alpha,L)\rangle_\alpha=0$.

In the same way, the solution to the second equation in \eqref{sysLa2Ham} is sought in the form:
\begin{gather*}
    \alpha(s)= A(s)+\theta\big(A(s),L(s),s\big),
\end{gather*}
where $A(s)$ is determined from the averaged equation
\begin{gather}
\label{avs1}
    \frac{d A}{ds}=\big\langle \mathcal K(A,\theta,L,s)\big\rangle_{ A},
\end{gather}
$\mathcal K:= \partial_{\mathcal L} \mathcal M\big(A+\theta(A,L,s),L+\ell(A+\theta(A,L,s),L,s),s\big)$.
The function $\theta(A,L,s)$ satisfies the following equation:
\begin{gather}
\label{psieq1}
 \big\langle \mathcal K(A,\theta,L,s) \big\rangle_{A}\,\frac{\partial \theta }{\partial A}=\mathcal N(A,\theta,L,s),
\end{gather}
where
$
\mathcal N:= \mathcal K(A,\theta,L,s)-\langle \mathcal K(A,\theta,L,s)\rangle_A-\partial_s\theta(A,L,s)-\partial_L\theta(A,L,s) \big\langle \mathcal J(\alpha,L+\ell(\alpha,L,s),s\big)\big\rangle_\alpha
$.
The asymptotic solution to \eqref{psieq1} is constructed in the form:
\begin{gather}
\label{sertheta1}
    \theta(A,L,s)=\sum_{k=2}^{\infty} \theta_k(A,L)s^{-k/9}
\end{gather}
with the additional condition: $\langle \theta_k(A,L)\rangle_{A}=0$.
The substitution the series into equation \eqref{psieq1} and the grouping the expressions of the same power of $s$ give the following chain of differential equations:
\begin{gather*}
\omega(L)\partial_{A} \theta_k=\Theta_k(A,L) - \langle \Theta_k(A,L)\rangle_{A}, \quad k\geq 2,
\end{gather*}
where each function $\Theta_k(A,L)$ for $k\geq 4$ is expressed through $\theta_2$, $\dots$, $\theta_{k-2}$. For example, \begin{eqnarray*}
    \Theta_2 &=&q_2(A,L), \\
    \Theta_3 &=& q_3(A,L), \\
    \Theta_4 & = & q_4(A,L) + \theta_2  \partial_A q_2(A,L)-\partial_A \theta_2 \langle q_2(A,L)\rangle_A -\partial_L \theta_2 \Big\langle \int\limits_0^L \partial_\alpha q_2 (\alpha,l)\, dl\Big\rangle_\alpha, \\
    \Theta_5 & = & q_5(A,L)+\theta_2 \partial_A q_3(A,L) +\theta_3 \partial_A q_2(A,L) -\partial_A \theta_2\langle q_3(A,L)\rangle_A-\partial_A \theta_3\langle q_2(A,L)\rangle_A\\
            & & - \partial_L \theta_3 \Big\langle \int\limits_0^L \partial_\alpha q_2 (\alpha,l)\, dl\Big\rangle_\alpha - \partial_L \theta_2 \Big\langle \int\limits_0^L \partial_\alpha q_3 (\alpha,l)\, dl\Big\rangle_\alpha
\end{eqnarray*}
It follows easily that all coefficients $\theta_k$ are uniquely determined in the class of $2\pi$-periodic functions with $\langle \theta_k(A,L)\rangle_{A}=0$.

In the last step, we integrate the averaged equations \eqref{averA1} and \eqref{avs1}.
First note that substituting series \eqref{serell1} and \eqref{sertheta1} for $\ell$ and $\theta$ in right-hand sides of the averaged equations, we get
\begin{eqnarray*}
    \widehat J(L,s)&:=& \Big\langle \mathcal J\big(\alpha,L+\ell(\alpha,L,s),s\big)\Big\rangle_\alpha
        =
            s^{-1}\sum_{i=0}^\infty \widehat J_i(L)s^{-i/9},\\
    \widehat K(L,s)& := & \big\langle \mathcal K(A,\theta,L,s)\big\rangle_{ A}
        = \omega(L)+
            \sum_{i=2}^\infty \widehat K_i(L)s^{-i/9}
\end{eqnarray*}
as $s\to\infty$, where $\widehat J_0(L)=\langle g_0(\alpha,L) \rangle_\alpha$, $\widehat K_2(L)=\langle q_2(A,L)\rangle_A$, etc.
Consider the following system of two differential equations:
\begin{gather}
\label{sysLAas2}
\frac{dL}{ds}= \widehat J(L,s), \quad \frac{dA}{ds}= \widehat K(L,s), \quad s\geq s_0,
\end{gather}
$s_0={\hbox{\rm const}}>0$. Since $\widehat J_0(L)=L/3+\mathcal O(L^2)$ and $\widehat J_k(L)=\mathcal O(L)$, $k\geq 1$ as $L\to 0$, then every solution with initial data close to zero
 escapes from the domain $(0,L_\ast)$ at time $S_\ast=S_\ast(s_0,L(s_0))$. Note that the asymptotic approximation basing on the Lyapunov function is not valid as $s\geq S_\ast$.
Consider a one parametric family of solutions to the first equation in \eqref{sysLAas2} with initial data $L(s_0)=\mathcal O(\varepsilon^2)$, where $\varepsilon$ is a small positive parameter. The asymptotic solution is sought in the form:
\begin{gather}
\label{Las1}
L(s)=\varepsilon^2 \sum_{k=0}^\infty\varepsilon^{2k} L_{k}(s).
\end{gather}
Substituting this into the equation and equating coefficients of powers
of $\varepsilon$, we get $L_k(s)= s^{(k+1)/3} l_k(s)$, where $l_{k}(s)=1+\mathcal O(s^{-1/9})$ as $s\to\infty$.
Hence,
\begin{gather*}
L(s)=\varepsilon^2  s^{1/3}l_0(s)(1+\mathcal O(\varepsilon)),  \quad \varepsilon\to 0
\end{gather*}
uniformly for $1\leq s/s_0\leq \mathcal O(\varepsilon^{-3})$.
$A(s)$ is found by integrating the second equation in \eqref{sysLAas2} with respect to $s$:
\begin{gather}
\label{Aform1}
     A(s)=\alpha_0+ \int\limits_{s_0}^s \widehat K(L(\varsigma),\varsigma)\,  d\varsigma,
\end{gather}
where $\alpha_0$ is the arbitrary parameter: $A(s_0)=\alpha_0$. Substituting \eqref{Las1} into \eqref{Aform1}, we obtain
\begin{gather*}
A(s)=A_0(s;\varepsilon)+\mathcal O(\varepsilon^{2/3}),  \quad \varepsilon\to 0,
\end{gather*}
uniformly for $1\leq s/s_0\leq \mathcal O(\varepsilon^{-3})$, where
\begin{eqnarray*}
A_0(s;\varepsilon)&:=&\alpha_0+(4\lambda)^{1/4} \omega_2 s+\varepsilon^2 \alpha_1(s)+\varepsilon^4 \alpha_2(s)+\varepsilon^6\alpha_3(s),\\
\alpha_1(s)&:=&\int\limits_{s_0}^s -\frac{5 }{48(\omega_2\phi)^2} L_0(\varsigma) +\sum_{k=2}^7  \widehat g_k '(0) \int\limits_{s_0}^s \varsigma^{-k/9} L_0(\varsigma)\, d\varsigma,\\
\alpha_2(s)&:=&\int\limits_{s_0}^s  \omega''(0) \frac{L_0^2(\varsigma)}{2}+\omega'(0) L_1(\varsigma)+\sum_{k=2}^4 \varsigma^{-k/9} \Big( \widehat g_k '(0) L_1(\varsigma)+   \widehat g_k ''(0) \frac{L_0^2(\varsigma)}{2}\Big) \, d\varsigma,\\
\alpha_3(s)&:=&\int\limits_{s_0}^s  \omega'''(0) \frac{L_0^3(\varsigma)}{6}+\omega''(0) L_0(\varsigma) L_1(\varsigma)+\omega'(0)L_2(\varsigma) \, d\varsigma,
\end{eqnarray*}
$\alpha_k(s)=\mathcal O(s^{(k+3)/3})$ as $s\to\infty$.
Combining this with \eqref{serell1}, \eqref{sertheta1}, \eqref{Las1}, \eqref{chrp2} and  \eqref{exchLF2}, we get the following asymptotic approximation of solutions to system \eqref{ham1}:
\begin{eqnarray*}
   r(s)&=& \varepsilon s^{1/6} \lambda^{-1/4} \sqrt{l_0(s)} \cdot \cos A_0(s;\varepsilon)+\mathcal O(\varepsilon^{7/6}), \\
   p(s)&=& \varepsilon  s^{1/6} \sqrt 2   \omega_2^{-1} \sqrt{l_0(s)} \cdot \sin A_0(s;\varepsilon)+\mathcal O(\varepsilon^{7/6}),
\end{eqnarray*}
as $\varepsilon\to 0$ uniformly for $s/s_0\in (1,\mathcal O(\varepsilon^{-3}))$.
Returning to the original variables we obtain the result of the theorem.
\end{proof}

Similarly, we have the following.

\begin{Th}
\label{Thas2}
Let $\sigma$ be a root of multiplicity 3 to equation \eqref{TEQ}. Then, in {\bf Case III}, there exists $T_0>0$ such that a two-parameter family of autoresonant solutions $\rho_{III}(\tau;\alpha_0,\varepsilon)$, $\psi_{III}(\tau;\alpha_0,\varepsilon)$ to system \eqref{MS}, starting at $\tau=T_0$ from $\varepsilon$-neighbourhood of the solution $\rho_\ast(\tau)$, $\psi_\ast(\tau)$, has the following asymptotics{\rm :}
\begin{eqnarray*}
\rho_{III}(\tau;\alpha_0,\varepsilon)&=&\rho_\ast(\tau)+ \varepsilon\lambda^{-1/4}  \tau^{-{11}/{24}} \tilde l_0(\tau) \cos \tilde A(\tau;\alpha_0)+O(\varepsilon^{9/8}), \\
\psi_{III}(\tau;\alpha_0,\varepsilon)&=&\psi_\ast(\tau)+\varepsilon\sqrt 2 \omega_3^{-1}   \tau^{-{1}/{24}} \tilde l_0(\tau) \sin \tilde A(\tau;\alpha_0)+O(\varepsilon^{9/8})
\end{eqnarray*}
as $\varepsilon\to 0$ uniformly for $\tau/T_0\in (1,\mathcal O(\varepsilon^{-3}))$, where $\tilde A(\tau;\alpha_0)$ has the following form{\rm :}
\begin{gather*}
        \tilde A(\tau;\alpha)=\alpha_0 - (2 \lambda)^{1/4}\frac{12}{13}\tau^{13/12} + \varepsilon^2 \tilde \alpha_1(\tau)+ \varepsilon^4 \tilde\alpha_2(\tau),
\end{gather*}
with $\omega_3=\sqrt{\chi^2 \mathcal P'''(\sigma;\delta,\nu)/2}$, $\tilde\alpha_k(\tau)=\mathcal O(\tau^{(3k+13)/12})$, $\tilde l_0(\tau)=1+\mathcal O(\tau^{-1/12})$ as $\tau\to\infty$.
\end{Th}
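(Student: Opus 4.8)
The plan is to follow the scheme of the proof of Theorem~\ref{Thas1}, with all exponents adjusted to the multiplicity-3 case. First I would apply the change of variables
\[
\rho(\tau)=\rho_\ast(\tau)+\tau^{-7/12}r(s),\qquad \psi(\tau)=\psi_\ast(\tau)+\tau^{-1/6}p(s),\qquad s=\frac{12}{13}\tau^{13/12},
\]
which is the composition of the shift leading to \eqref{ham} with the rescaling used for {\bf Case III} in the proof of Theorem~\ref{cgs}; it reduces \eqref{MS} to the near-Hamiltonian system \eqref{ham2}, where $\mathcal H_3(r,p,s)=h_3^0(r,p)+\mathcal O(s^{-2/13})$ and $\mathcal F_3(r,p,s)=\mathcal O(s^{-1})$ as $s\to\infty$, with
\[
h_3^0(r,p):=\lim_{s\to\infty}\mathcal H_3(r,p,s)=\lambda^{1/2}r^2+\omega_3^2\frac{p^2}{2}-\chi\,\mathcal P'''(\sigma;\delta,\nu)\frac{p^3}{6}+\mathcal P'''(\sigma;\delta,\nu)\frac{p^4}{24}.
\]
Since $\mathcal P'''(\sigma;\delta,\nu)>0$ and $\omega_3^2>0$ in {\bf Case III}, the quadratic part of $h_3^0$ is positive definite, so the level lines $\{h_3^0=I\}$ are closed curves about the origin for $I\in(0,I_\ast)$, each carrying a periodic orbit $\xi(\alpha,I)$, $\eta(\alpha,I)$ of period $2\pi/\omega(I)$ with $\omega(I)=\omega(0)+\mathcal O(I)$ as $I\to 0$, $\omega(0)>0$ being the linear frequency of $h_3^0$.

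Next I would pass to action-angle variables $(I,\alpha)$ by setting $r(s)=\tilde r(\alpha(s),I(s))$, $p(s)=\tilde p(\alpha(s),I(s))$, $\tilde r(\alpha,I):=\xi(\alpha/\omega(I),I)$, $\tilde p(\alpha,I):=\eta(\alpha/\omega(I),I)$, which are $2\pi$-periodic in $\alpha$ and reversible while $\omega(I)\ne 0$; this turns \eqref{ham2} into $dI/ds=\mathcal F^I$, $d\alpha/ds=\omega(I)+\mathcal O(s^{-2/13})$ with $\mathcal F^I=\mathcal O(s^{-2/13})$ and all coefficients $2\pi$-periodic in $\alpha$. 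Then, exactly as in the proof of Theorem~\ref{Thas1}, I would replace the action $I$ by the value $\mathcal L(s):=\widetilde U_3(\alpha(s),I(s),s)$ of the Lyapunov function $U_3$ constructed in the proof of Theorem~\ref{cgs}, where $\widetilde U_3(\alpha,I,s):=U_3(\tilde r(\alpha,I),\tilde p(\alpha,I),s)=I+\mathcal O(s^{-2/13})$, so the map $(I,\alpha)\mapsto(\mathcal L,\alpha)$ is reversible near $\mathcal L=0$; by \eqref{LFD2} the resulting system takes the near-Hamiltonian form $d\mathcal L/ds=-\partial_\alpha\mathcal M+\mathcal J$, $d\alpha/ds=\partial_{\mathcal L}\mathcal M$, with $\mathcal M(\alpha,\mathcal L,s)=\int_0^{\mathcal L}\mathcal Q(\alpha,l,s)\,dl$, $\mathcal J=\mathcal G+\partial_\alpha\mathcal M$, $\mathcal Q=\omega(\mathcal L)+\sum_{k\ge 2}q_k(\alpha,\mathcal L)s^{-k/13}$, $\mathcal G=s^{-1}\sum_{k\ge 0}g_k(\alpha,\mathcal L)s^{-k/13}$, all coefficients $2\pi$-periodic in $\alpha$, and $g_0(\alpha,\mathcal L)=\frac{3}{13}\mathcal L+\mathcal O(\mathcal L^{3/2})$.

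The heart of the argument is the modified averaging of this system. I would seek $\mathcal L(s)=L(s)+\ell(\alpha,L(s),s)$ and $\alpha(s)=A(s)+\theta(A(s),L(s),s)$ with $\ell$, $\theta$ $2\pi$-periodic of zero $\alpha$-average, and $L$, $A$ governed by the averaged equations $dL/ds=\langle\mathcal J(\alpha,L+\ell,s)\rangle_\alpha$, $dA/ds=\langle\mathcal K(A,\theta,L,s)\rangle_A$, $\mathcal K:=\partial_{\mathcal L}\mathcal M$ at the shifted arguments. The correctors are built recursively, $\ell=s^{-1}\sum_{k\ge 0}\ell_k(\alpha,L)s^{-k/13}$, $\theta=\sum_{k\ge 2}\theta_k(A,L)s^{-k/13}$, from triangular chains $\omega(L)\ell_k=\Lambda_k(\alpha,L)$, $\omega(L)\partial_A\theta_k=\Theta_k(A,L)-\langle\Theta_k\rangle_A$ whose right-hand sides have zero $\alpha$-average and are therefore uniquely solvable in the class of zero-average $2\pi$-periodic functions because $\omega(L)\ne 0$. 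Since $\langle\mathcal J(\alpha,L+\ell,s)\rangle_\alpha=s^{-1}\sum_{i\ge 0}\widehat J_i(L)s^{-i/13}$ with $\widehat J_0(L)=\frac{3}{13}L+\mathcal O(L^2)$ and $\widehat J_i(L)=\mathcal O(L)$, the solution of the averaged equation starting from $L(s_0)=\mathcal O(\varepsilon^2)$ has the form $L(s)=\varepsilon^2\sum_{k\ge 0}\varepsilon^{2k}L_k(s)$ with $L_k(s)=s^{3(k+1)/13}l_k(s)$, $l_k(s)=1+\mathcal O(s^{-1/13})$; in particular $L(s)=\varepsilon^2 s^{3/13}l_0(s)\bigl(1+\mathcal O(\varepsilon)\bigr)$ uniformly for $1\le s/s_0\le\mathcal O(\varepsilon^{-13/4})$. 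Integrating $dA/ds=\omega(L(s))+\mathcal O(s^{-2/13})$ then yields $A(s)=\alpha_0+\omega(0)s+\varepsilon^2\alpha_1(s)+\varepsilon^4\alpha_2(s)+\mathcal O(\varepsilon^{2/3})$ with $\alpha_k(s)=\mathcal O(s^{(3k+13)/13})$. Finally, undoing the substitutions $(\mathcal L,\alpha)\mapsto(I,\alpha)\mapsto(r,p)\mapsto(\rho,\psi)$, using $\tilde r(\alpha,I)=\lambda^{-1/4}\sqrt{I}\cos\alpha+\mathcal O(I)$, $\tilde p(\alpha,I)=\sqrt{2}\,\omega_3^{-1}\sqrt{I}\sin\alpha+\mathcal O(I)$, and returning to $\tau$ via $s=\frac{12}{13}\tau^{13/12}$, $\tilde l_0(\tau):=\sqrt{l_0(s)}=1+\mathcal O(\tau^{-1/12})$, one recovers the asymptotics of the statement, with $T_0$ the constant furnished by Theorem~\ref{c3s}.

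The step I expect to be the main obstacle is the one in which Theorem~\ref{c3s} enters, namely securing the uniformity of all these estimates over the long interval. One must ensure that for $1\le\tau/T_0\le\mathcal O(\varepsilon^{-3})$ the tracked solution stays inside the fixed neighbourhood of the equilibrium where the action-angle transformation, the substitution $I\mapsto\mathcal L$, and all remainder bounds are valid; this is exactly what the $\rho$-stability estimate \eqref{v3est} of Theorem~\ref{c3s} guarantees, while the slow growth $\mathcal L(s)=\mathcal O(\varepsilon^2 s^{3/13})=\mathcal O(\varepsilon^{5/4})$ on that interval, controlled by \eqref{LFD2}, keeps the solution well inside the admissible region and makes the averaging remainders summable. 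Beyond this time scale the Lyapunov control is lost, which is why the family of autoresonant solutions and their asymptotics can be established only on an asymptotically long but finite time interval.
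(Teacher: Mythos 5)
Your proposal is correct and follows exactly the route the paper takes: its proof of this theorem is literally ``repeat the proof of Theorem~\ref{Thas1} with the Lyapunov function $U_3$ in place of $U_2$,'' i.e.\ the Case~III rescaling leading to \eqref{ham2}, action--angle variables for $h_3^0$, the substitution $\mathcal L=\widetilde U_3$, and modified averaging in powers of $s^{-1/13}$. Your exponent bookkeeping ($L\sim\varepsilon^2 s^{3/13}$, $s\lesssim\varepsilon^{-13/4}$, amplitudes $\tau^{-11/24}$, $\tau^{-1/24}$) matches the stated asymptotics, so no further comment is needed.
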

\begin{proof}
The proof is similar to the proof of Theorem \ref{Thas1} with using the Lyapunov function $U_3(r,p,s)$ instead of $U_2(r,p,s)$.
\end{proof}

\section{Conclusion}

In summary, the model of the autoresonant capture in nonlinear systems with the combined external and parametric excitation in the vicinity of the bifurcation points has been investigated. The suggested approach relies on the stability analysis of the particular solutions with power-law asymptotics at infinity. It has been shown that outside the bifurcation points there are several autoresonant modes with different phase shifts $\sigma_i$ associated with simple roots to equation \eqref{TEQ}. Depending on the sign of the value $\mathcal P''(\sigma_i;\delta,\nu)$, where $\delta=\mu_0\sqrt \lambda$, some of these modes are stable (see the shaded areas in Fig.~\ref{S0P}). To each stable mode there corresponds the two-parameter family of the autoresonant solutions with the asymptotics detailed in Theorem~\ref{Thas0}. Some of the autoresonant modes coalesce, when the parameters $(\delta,\nu)$ passes through the bifurcation curves $\gamma_\pm$ from $\Omega_+$ to $\Omega_-$. Assume that equation \eqref{TEQ} has only two different roots at the bifurcation point: $\sigma_0$ is a root of multiplicity 3 and $\sigma_1$ is a simple root. Then there are two autoresonant modes with different phase shifts. The stability of the mode corresponding to a multiple root depends on the sign of the value $\mathcal P'''(\sigma_0;\delta,\nu)$ (see Theorem \ref{c3s} and the shaded area in Fig.~\ref{M23}, a). In this case the stability has been justified on finite but asymptotically long time intervals. Now suppose that equation \eqref{TEQ} has three different roots  at the bifurcation point: $\sigma_0$ is a root of multiplicity 2 and $\sigma_1$, $\sigma_2$ are simple roots. Then system \eqref{MS} has two or four autoresonant modes depending on the sign of value  $\mathcal P''(\sigma_0;\delta,\nu)$. In particular, in the case $\mathcal P''(\sigma_0;\delta,\nu)>0$, there are two modes corresponding to the simple roots. In the opposite case, $\mathcal P''(\sigma_0;\delta,\nu)<0$, there are two additional modes corresponding to $\sigma_0$ and associated with the particular solutions having asymptotics \eqref{PAS2}. One of these additional modes with $\psi_1=-\phi$ is stable on the asymptotically long time interval. The asymptotics for the general autoresonant solutions to system \eqref{MS} at the bifurcation points has been described in Theorems~\ref{Thas1} and~\ref{Thas2}.

\begin{figure}
\vspace{-2ex} \centering 
\subfigure[$\nu=0$]{\includegraphics[width=0.3\linewidth]{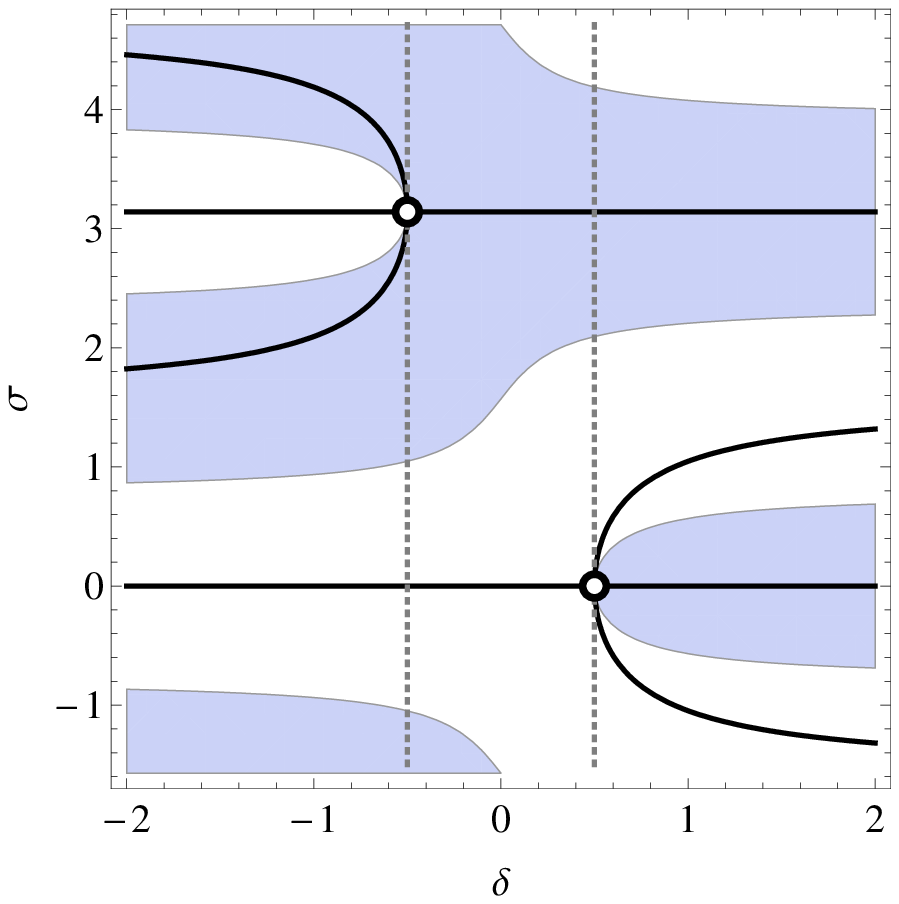}} \hspace{3ex}
\subfigure[$\displaystyle \nu=\frac{\pi}{6}$]{\includegraphics[width=0.3\linewidth]{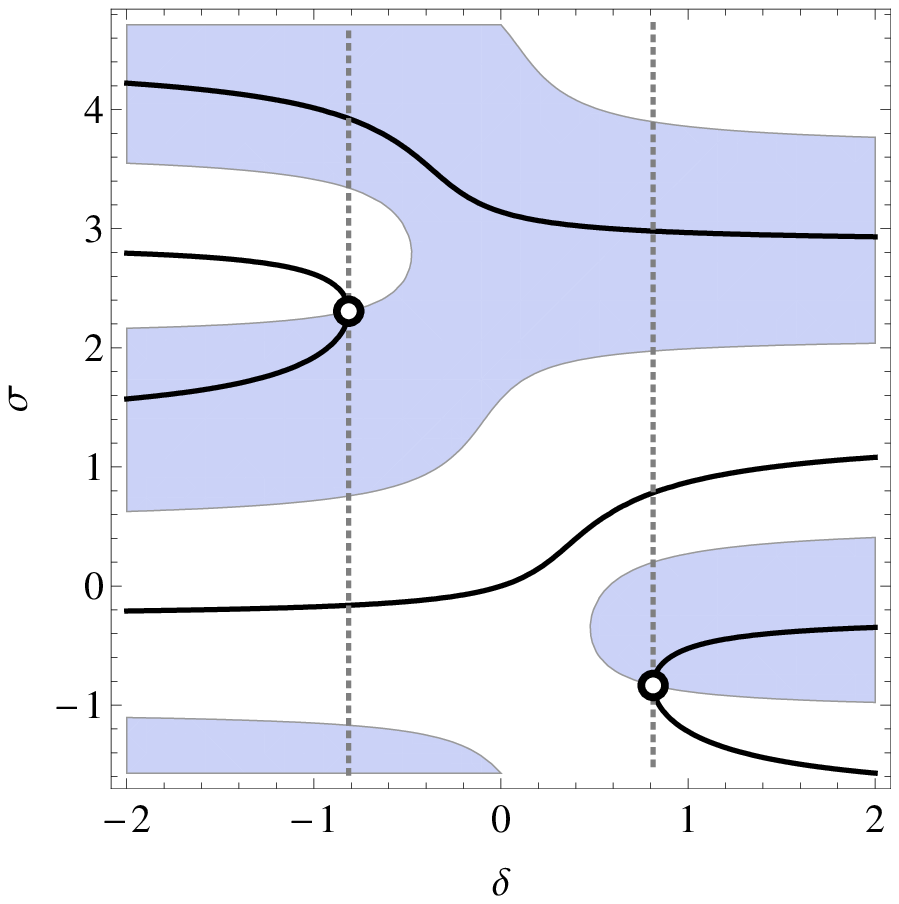}} \hspace{3ex}
\subfigure[$\displaystyle \nu=\frac{5\pi}{6}$]{\includegraphics[width=0.3\linewidth]{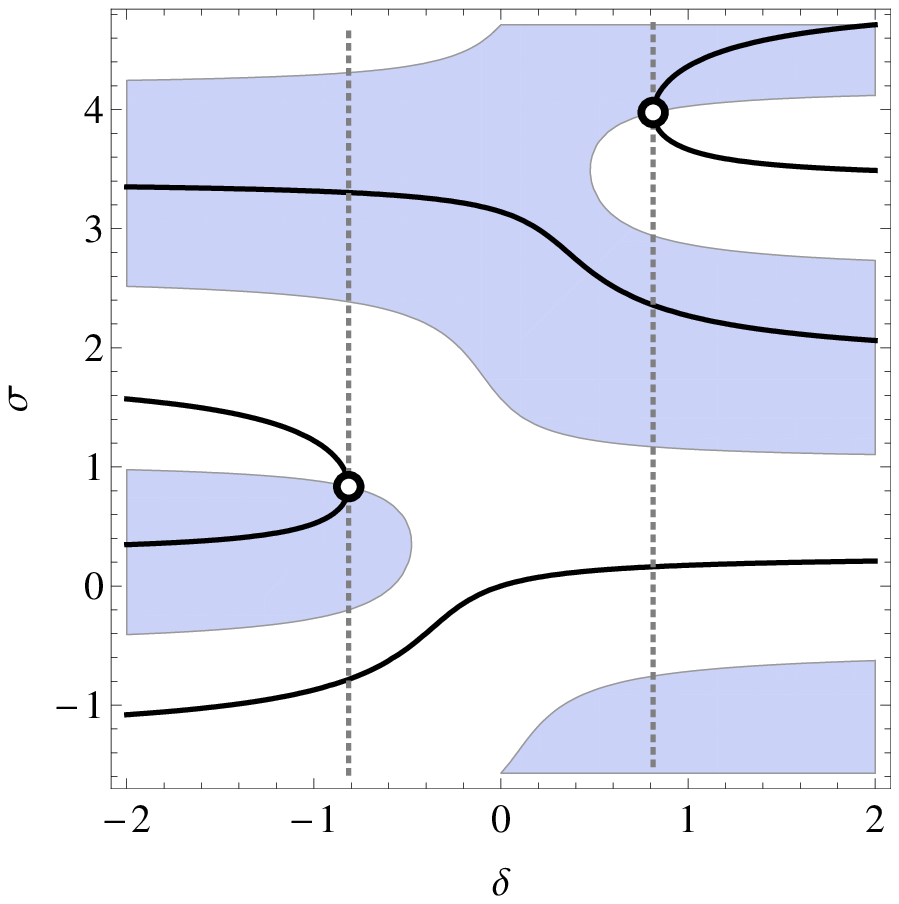}}
\caption{\small The simple roots to equation \eqref{TEQ} as functions of the parameter $\delta$ (black solid lines). The vertical dotted lines correspond to $\gamma_-$ and $\gamma_+$. The shaded areas correspond to $\mathcal P'(\sigma;\delta,\nu)>0$, where the particular solutions to system \eqref{MS} with asymptotics \eqref{PAS} are asymptotically stable.} \label{S0P}
\end{figure}

\begin{figure}
\centering
\subfigure[$\nu=0$]{\includegraphics[width=0.3\linewidth]{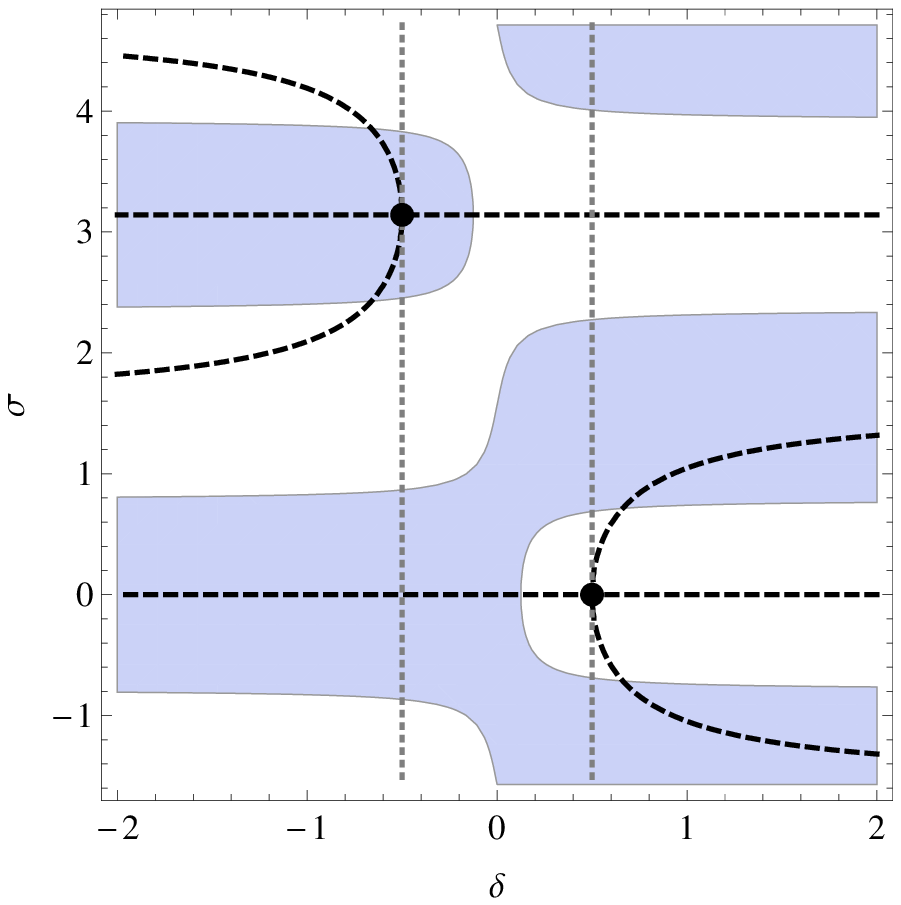}}\hspace{3ex}
\subfigure[$\displaystyle \nu=\frac{\pi}{6}$]{\includegraphics[width=0.3\linewidth]{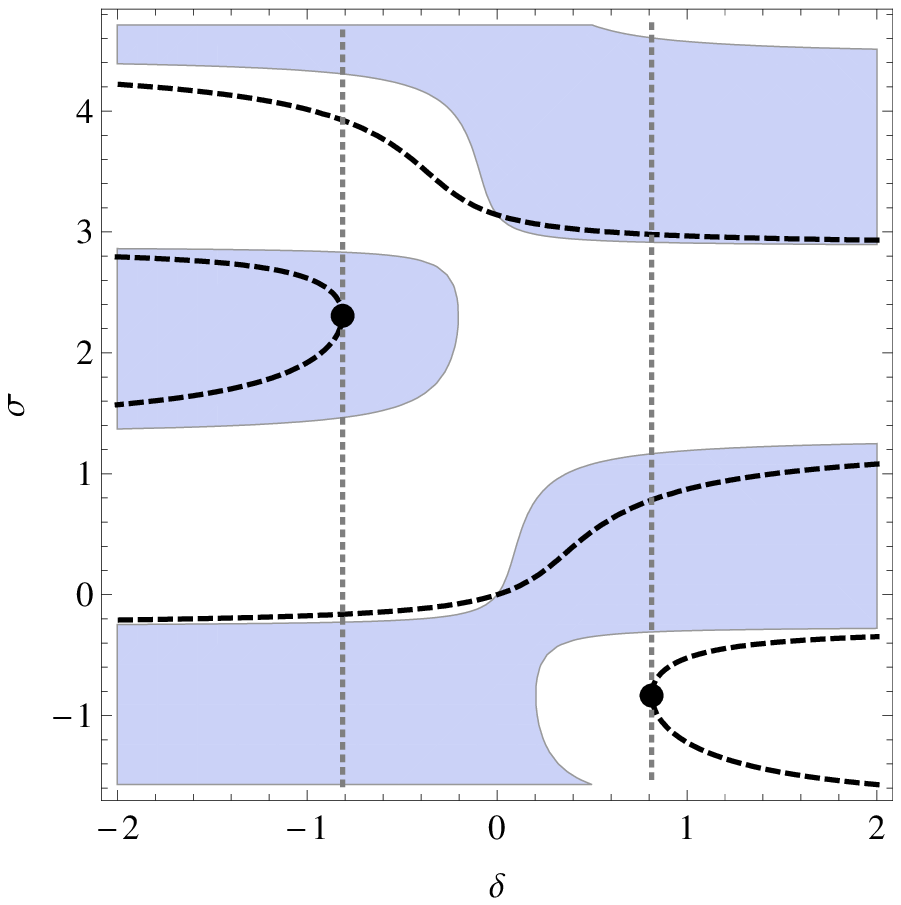}}\hspace{3ex}
\subfigure[$\displaystyle \nu=\frac{5\pi}{6}$]{\includegraphics[width=0.3\linewidth]{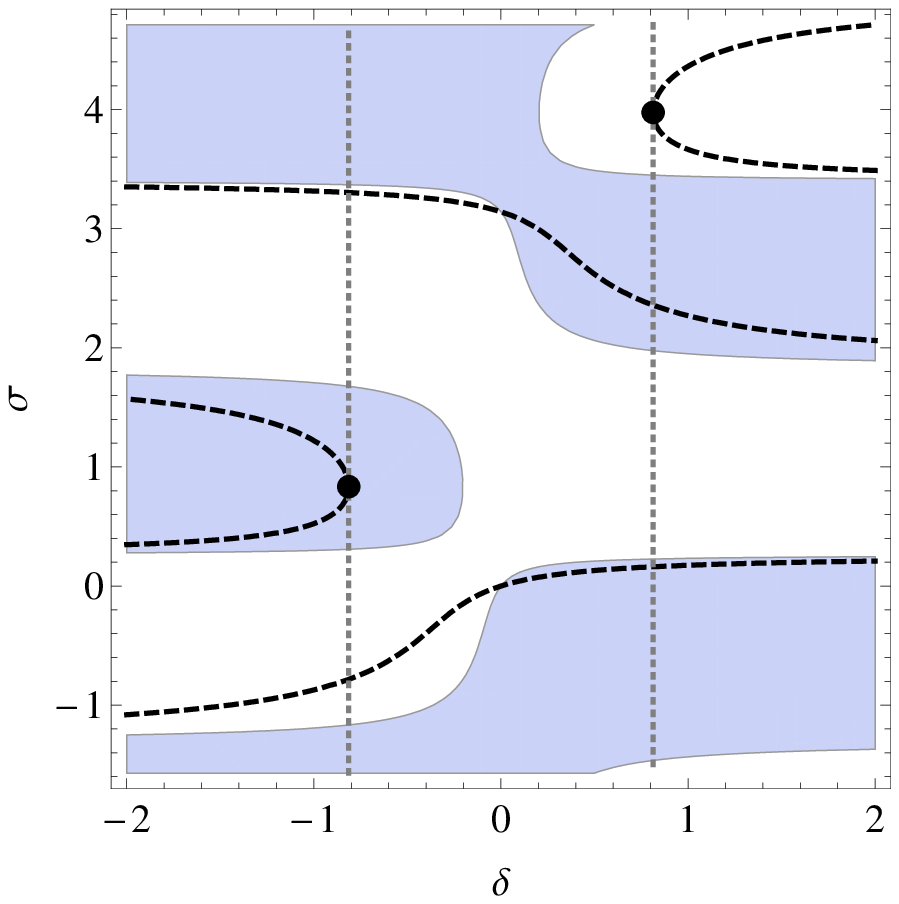}}
\caption{\small The roots to equation \eqref{TEQ} as functions of the parameter $\delta$ (dashed lines). The vertical dotted lines correspond to $\gamma_-$ and $\gamma_+$. The black points correspond to the multiple roots $\sigma_0$. (a) The shaded areas correspond to $\mathcal P'''(\sigma;\delta,\nu)>0$, where the particular solution to system \eqref{MS} with asymptotics \eqref{PAS3} is stable. (b),(c) The shaded areas correspond to $\mathcal P''(\sigma;\delta,\nu)<0$, where two particular solutions to system \eqref{MS} with asymptotics \eqref{PAS2} exist.} \label{M23}
\end{figure}

Note that the stability of the particular autoresonant solutions have been justified by constructing the appropriate Lyapunov functions. These functions have been also used in the asymptotic analysis of the general solutions. In this approach the Lyapunov functions play the role of the action variable that usually appears in the study of the perturbed Hamiltonian systems by the averaging method. The using of the Lyapunov function as a new dependent variable simplifies considerably the construction of the asymptotic solutions to the perturbed non-autonomous systems and reveals the structure of the corresponding averaged equation at once.

\begin{figure}
\centering
\subfigure[$\delta<-\delta_\gamma$]{\includegraphics[width=0.3\linewidth]{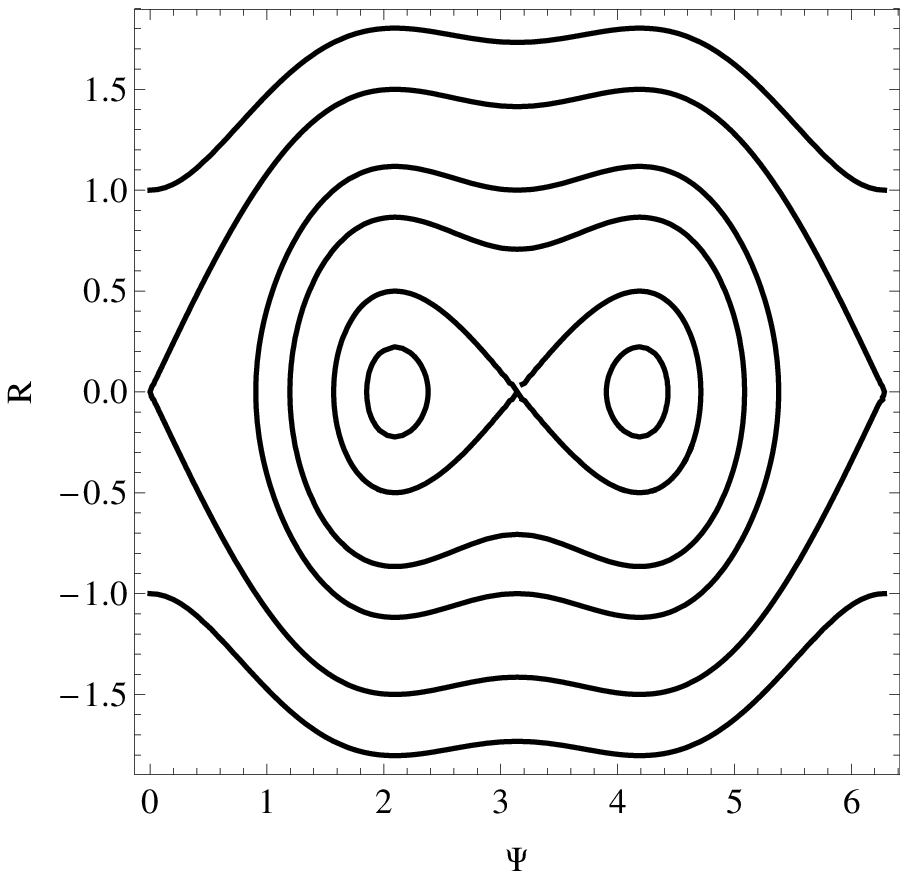}} \hspace{3ex}
\subfigure[$\displaystyle \delta=-\delta_\gamma$]{\includegraphics[width=0.3\linewidth]{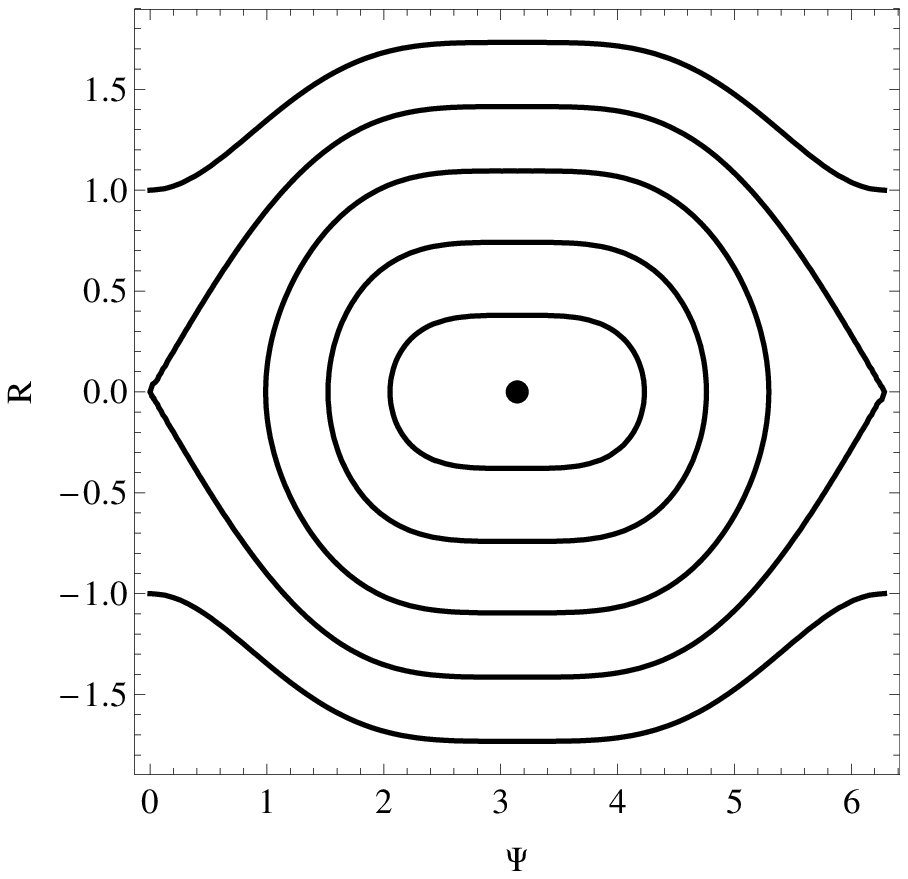}} \hspace{3ex}
\subfigure[$\displaystyle -\delta_\gamma<\delta<0$]{\includegraphics[width=0.3\linewidth]{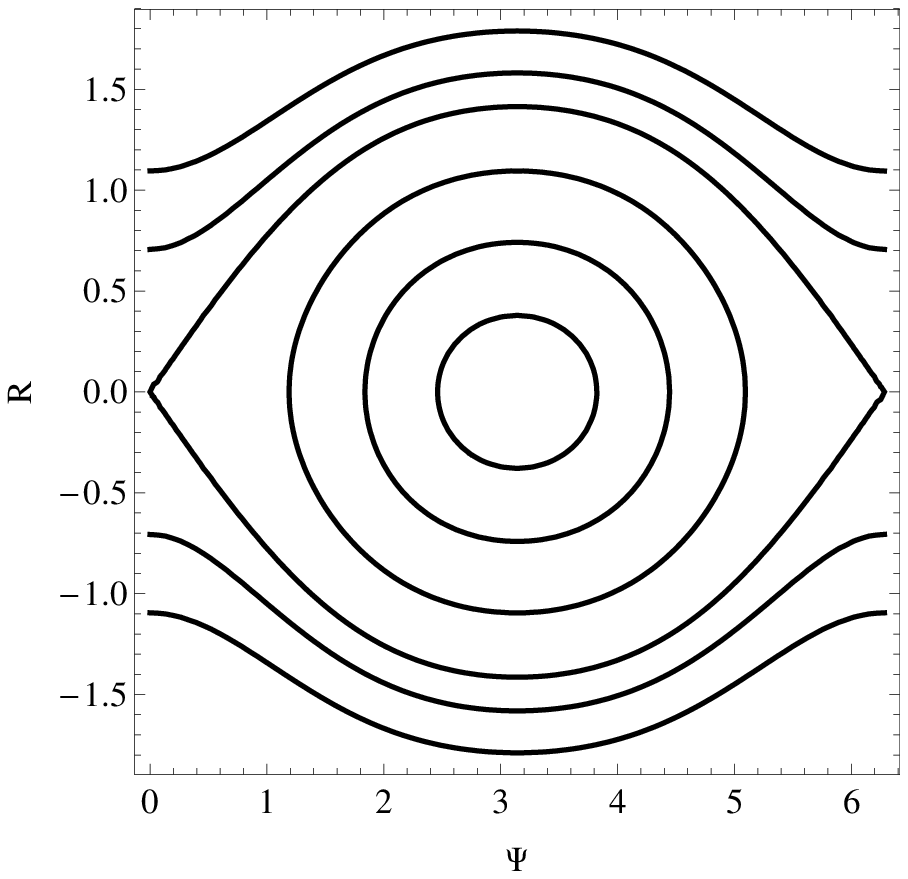}}
\caption{\small Level lines of the Hamiltonian $h_{-1}(R,\Psi)$; $\lambda=1$, $\nu=0$, $  \delta_\gamma={1}/{2}$.  The black point corresponds to $(\sigma_0,0)$, where $\sigma_0$ is the root of multiplicity 3 to equation \eqref{TEQ}.} \label{PP3}
\end{figure}

\begin{figure}
 \centering
\subfigure[$\delta<-\delta_\gamma$]{\includegraphics[width=0.3\linewidth]{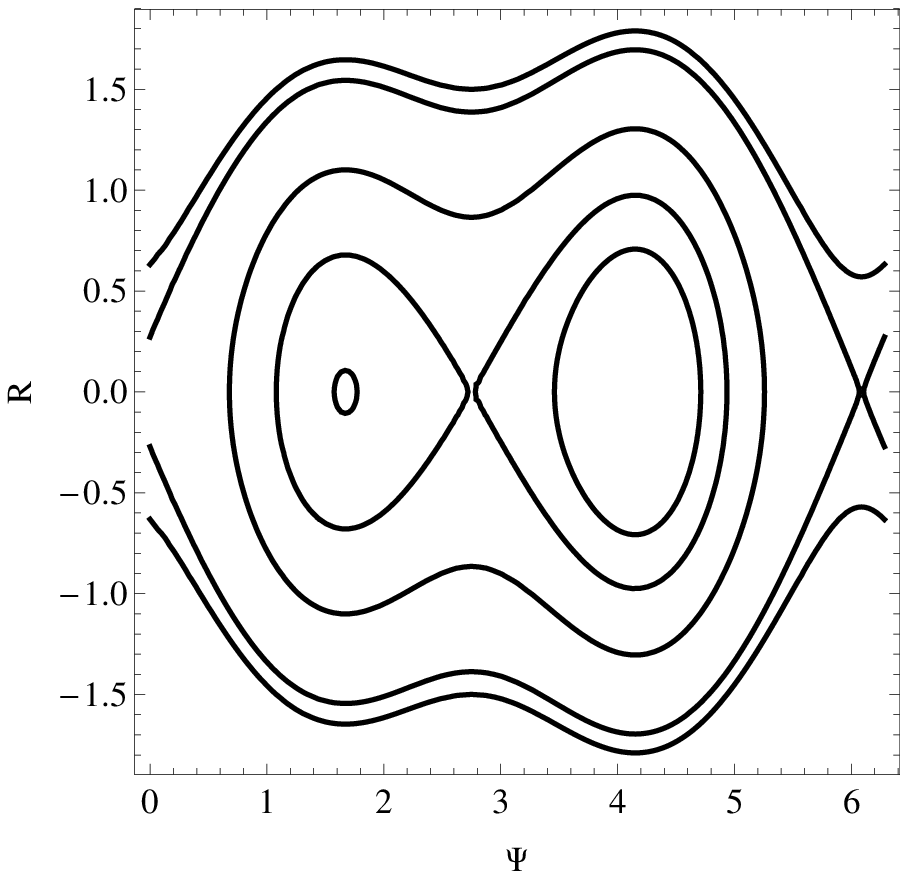}} \hspace{3ex}
\subfigure[$\displaystyle \delta=-\delta_\gamma$]{\includegraphics[width=0.3\linewidth]{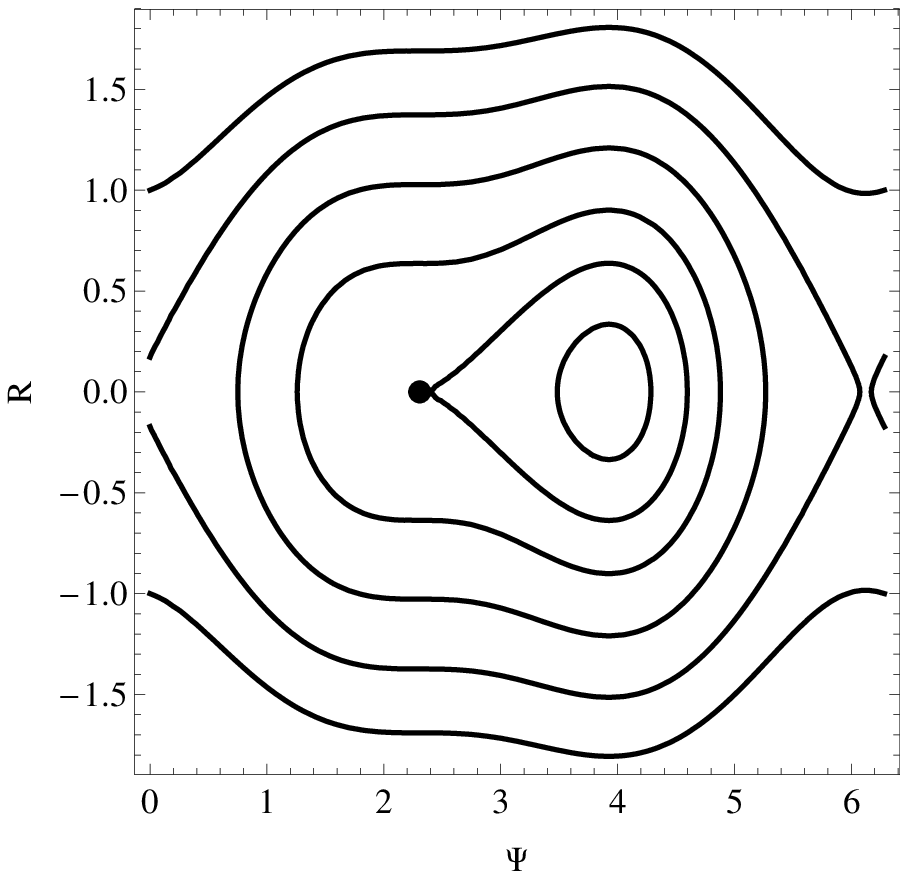}} \hspace{3ex}
\subfigure[$\displaystyle -\delta_\gamma<\delta<0$]{\includegraphics[width=0.3\linewidth]{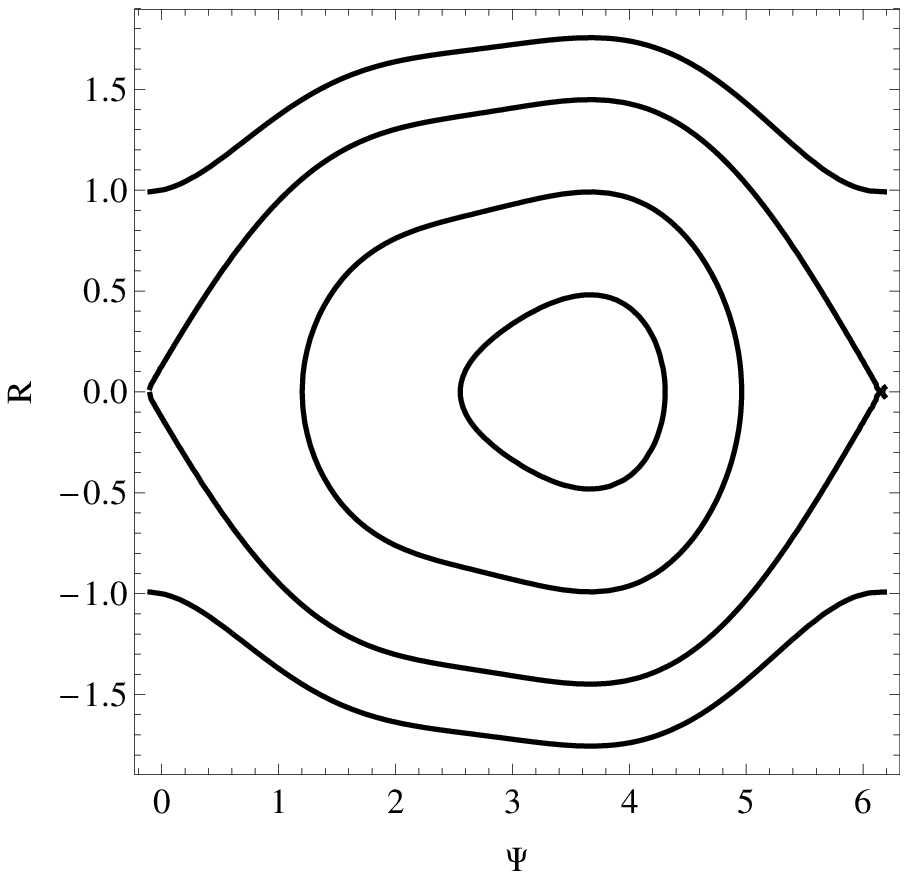}}
\caption{\small Level lines of the Hamiltonian $h_{-1}(R,\Psi)$; $\lambda=1$, $  \nu= {\pi}/{6}$, $\delta_\gamma\approx 0.8134$.  The black point corresponds to $(\sigma_0,0)$, where $\sigma_0$ is the root of multiplicity 2 to equation \eqref{TEQ}.} \label{PP2}
\end{figure}

 It can easily be checked that system \eqref{MS} rewritten in the form of \eqref{ham} corresponds to a weak decaying perturbation of the autonomous system with the Hamiltonian $h_{-}(R,\Psi)$ that, in particular, arises in the study of a simple pendulum with vibrating suspension point. The bifurcations in this system have been discussed, for instance, in~\cite{NSCNSNS17}. When the parameters $(\delta,\nu)$ of the unperturbed Hamiltonian system passes through the curves $\gamma_\pm$, the centre-saddle bifurcation occurs: the centre and the saddle coalesce or the additional pair of points (centre and saddle) appears from the stable or the unstable equilibrium (see Fig.~\ref{PP2} and~\ref{PP3}).  The presence of the time-dependent decaying perturbations in system \eqref{ham} leads to a deformation of the phase portrait near the centres. In particular, some points become asymptotically (polynomially) stable, when the parameters are outside of the bifurcation curves. At the bifurcation points the centres corresponding to multiple roots of equation \eqref{TEQ} become unstable with respect to all the variables. In this case the stability is preserved with respect to one of the variables at least on an asymptotically long time interval.
Note that the bifurcations in non-autonomous systems of differential equations have been considered in several papers using different approaches~\cite{LRS02,KS05,MR08}. However, to the best of our knowledge, the influence of general time-dependent decaying perturbations on the bifurcations of equilibria in Hamiltonian systems has not been thoroughly investigated. This will be discussed elsewhere.

\section*{Acknowledgements}

The author is grateful to L.A. Kalyakin for useful discussions.

}
\end{document}